\pgfplotsset{compat=1.18}
\newcommand{\mfn}{\mathfrak{N}}
\newcommand{\mcd}{\mathcal{D}}
\newcommand{\mcl}{\mathcal{L}}
\newcommand{\mcn}{\mathcal{N}}
\newcommand{\mcp}{\mathcal{P}}
\newcommand{\mct}{\mathcal{T}}
\newcommand{\mcx}{\mathcal{X}}
\newcommand{\mcy}{\mathcal{Y}}
\newcommand{\mvert}{\,|\,}
\newcommand{\atwo}{\langle2\rangle}
\newcommand{\Eta}{\mathrm{H}}
\newcommand{\Tau}{\mathrm{T}}
\newcommand{\tx}{\tilde{x}}
\newcommand{\ty}{\tilde{y}}
\newcommand{\tE}{\tilde{E}}
\newcommand{\tL}{\widetilde{L}}
\newcommand{\tM}{\tilde{M}}
\newcommand{\tR}{\tilde{R}}
\newcommand{\tX}{\tilde{X}}
\newcommand{\tPi}{\widetilde{\Pi}}
\newcommand{\tSigma}{\widetilde{\Sigma}}
\newcommand{\tXi}{\widetilde{\Xi}}
\newcommand{\inlawto}{\stackrel{\mcd}{\longrightarrow}}
\newcommand{\G}{\mathbb{G}}
\newcommand{\eqinlaw}{\stackrel{\mcd}{=}}
\begin{document}


\title{Assignment Based Metrics for Attributed Graphs}

\author{Dominic Schuhmacher and Leoni Carla Wirth\footnote{Supported by Deutsche Forschungsgemeinschaft GRK 2088.}\\[2mm]
Institute for Mathematical Stochastics\\
University of G\"ottingen
}
\maketitle

\begin{abstract}
We introduce the Graph TT (GTT) and Graph OSPA (GOSPA) metrics based on optimal assignment, which allow us to compare not only the edge structures but also general vertex and edge attributes of graphs of possibly different sizes. We argue that this provides an intuitive and universal way to measure the distance between finite simple attributed graphs.
Our paper discusses useful equivalences and inequalities as well as the relation of the new metrics to various existing quantifications of distance between graphs. 

By deriving a representation of a graph as a pair of point processes, we are able to formulate and study a new type of (finite) random graph convergence and demonstrate its applicability using general point processes of vertices with independent random edges.

Computational aspects of the new metrics are studied in the form of an exact and two heuristic algorithms that are derived from previous algorithms for similar tasks. As an application, we perform a statistical test based on the GOSPA metric for functional differences in olfactory neurons of \textit{Drosophila} flies.

\par\medskip
{\footnotesize
\noindent{\emph{2020 Mathematics Subject Classification}:}
 Primary\, 05C99;  	
 \ Secondary\, 60B99, 05C80, 60B10

\par
\noindent{\emph{Keywords:} Graph Distances, Graph Alignment, Random Graphs, Wasserstein, Point Processes}}
\end{abstract}

\section{Introduction}

Graphs belong to the fundamental structures in mathematics. From a theoretical point of view, they offer a diversity of sometimes very challenging problems based on a simple set of rules. From an applied point of view, they serve as flexible models for any kind of scientific question studying ``things'' and the relations between them. The study of random graphs and the use of graphs as a data structure has surged in the 21st century (\cite{grimmett2018}; \cite{frieze2023}; \cite{vanderhofstad2016,vanderhofstad2023}; \cite{crane2018}; \cite{hamilton2020}) and various new fields have arisen such as \emph{network science}, \emph{graph machine learning} and \emph{graph neural networks}.

In the present paper we are interested in graph metrics that allow for an intuitive quantification of discrepancies, not only between the connectivity structures of the graphs, but also between their vertex and edge attributes (labels, weights, \ldots). This is achieved by assigning the vertices of one graph to those of the other as well as possible with respect to (pseudo)metrics on the underlying spaces of attributes. As part of the vertex attributes, we are especially interested in coordinates localizing the vertices in Euclidean space or on a Riemannian manifold, but the presented metrics are very general and work well for many other kinds of (vertex and edge) attributes and their associated underlying (pseudo)metrics. Our goal is to establish an intuitive metric structure on a suitable graph space. From the point of view of probability theory, this structure allows to express random graphs concisely as random elements (pairs of point processes in fact) in a complete separable metric space, gives a suitable topology for studying convergence in distribution of attributed graphs and allows to quantify the rate of convergence in terms of Wasserstein metrics; see Subsection~\ref{ssec: convergence considerations} for details. From the point of view of statistics and data science, the metric structure allows to apply a host of modern techniques for data on metric spaces; see Section~\ref{sec:realdata}. 



Various quantifications of the discrepancy between graphs have been studied before. Often the goal was to find graph isometries and solve pattern recognition problems or to develop statistical methods for more general problems. \textcite{emmert2016} and \textcite{evans2019} give an overview of the developments in this area. In what follows, we list and compare some of these measures for quantifying the (dis)similarity between two graphs. We emphasize that many of them do not satisfy metric properties or it has at least not been established if they do by their inventors.

An important approach to measure graph (dis-)similarities is the graph edit distance introduced by \textcite{bunke1983}. The graph edit distance measures the dissimilarity of two graphs by calculating the minimal cost required to transform one graph into the other using edit operations, such as deleting, inserting and substituting vertices and edges. Various modifications of the graph edit distance have been studied, e.g. \textcite{marzal1993} proposed to apply a cost function normalized by the number of edit operations to the graph edit distance while \textcite{Bunke1997} considered the graph edit distance with a certain cost function showing a correspondence of the resulting graph match and the maximum common subgraph.

Another class of (dis-)similarity measures are graph kernels that are used in machine learning to apply kernel methods on graph structure data. Graph kernels are symmetric, positive semi-definite functions designed to capture the similarity of a set of graphs. In the last years several approaches for the construction of suitable graph kernels emerged, see e.g. \textcite{kriege2020} and \textcite{nikolentzos2021} for an overview. One of these approaches is the optimal assignment method, proposed e.g. by \textcite{frohlich2005}, that suggests to measure the similarity of two graphs by searching for a matching of components that maximizes the overall similarity of these components. \textcite{frohlich2005} showed that this approach does in general not lead to positive semi-definite kernels. However, \textcite{vert2008} found that strong base kernels, i.e.\ kernels that are induced by a so-called hierarchy, yield positive semi-definite optimal assignment kernels. Further, there are several approaches for machine learning with indefinite similarity functions. For example, \textcite{johansson2015} proposed a similarity measure between two graphs that is constructed by embedding graphs into a geometric space using different methods like the adjacency matrix or the Graph Laplacian and then applying the maximum-weight-matching to these embeddings. They further use landmarks and this similarity measure to solve graph classification problems.  
A different approach by \textcite{luss2007} modifies the Support Vector Machine algorithm such that it is applicable to indefinite similarity functions by viewing the indefinite similarity function as a noisy sample of an unknown positive semi-definite function. This approach is, for example, applied by \textcite{Nikolentzos2017} to an indefinite similarity function that is constructed by applying the Earth Mover's distance to the optimal assignment kernel. 

In \textcite{Memoli2011} the Gromov--Wasserstein distance between metric measure spaces has been introduced. \textcite{MemoliNeedham2022} discuss a Gromov--Monge variant, which avoids the splitting of mass. For graphs with (possibly) weighted vertices that have spatial positions or can otherwise be embedded in a metric space of features, these serve as (pseudo-)metrics, which however do not take positions (or other features) of the vertices into account, but only their distances. Since for a graph metric this is usually too restrictive, \textcite{VayerEtAl2019} introduced the Fused Gromov--Wasserstein (FGW) distance, which is a convex combination of the Wasserstein distance between (metric embeddings of) the vertices and the Gromov--Wasserstein distance.

A different, spectral approach called GOT, which is also based on optimal-transport, is proposed in \textcite{PetricEtAl2019got}. In this approach, deterministic graphs are characterised by centred multivariate normal distributions whose covariance matrix is the generalized inverse of the graph Laplacian. A graph discrepancy is then defined by minimizing the squared Wasserstein distance of these normal distributions, for which there is a simple explicit expression, over all vertex matchings. For unequal cardinalities of the vertex sets one-to-many matchings are admitted, based on which the Laplacian of the larger graph is transformed to the lower dimensional space, see \textcite{PetricEtAl2022wasserstein}. An alternative approach that uses more general couplings of counting measures, i.e.\ allows for splitting of mass, was proposed in \textcite{DongSawin2020}. 
In all of these approaches, positions or other features of vertices are not taken into account. 

In the present article we propose three metrics based on (incomplete) matchings of vertices and their associated edges. 
This approach allows us to compare not only the edge structure, but also vertex attributes of graphs. Furthermore, we are able to compare (and match) graphs of (possibly) different size without the splitting of  ``masses'' of vertices or edges. This is achieved by inserting auxiliary vertices and adding a certain penalty to the distance. The handling of edges incident to auxiliary vertices differs for the metrics proposed in this article. We consider two different approaches on how different sizes of graphs (cardinalities of their vertex sets) are accounted for: \emph{absolute metrics}, where each vertex contributes up to a fixed amount, and \emph{relative metrics}, where this contribution is normalized with the total number of vertices. In this way we are able to capture the intuition that a missing edge in a large graph does barely contribute to the distance, while a missing edge in a small graph is more influential. Although additional characteristics such as differences in the global graph structure are by default not considered by our metrics, it is possible to capture such influences by modeling them explicitly through the vertex and edge features.

In the following we study examples to compare the metrics proposed in this article with the fused Gromov--Wasserstein (FGW) distance and the Wasserstein distance applied to graph signals (GOT approach) to emphasize main differences and shed light on cases where the use of such metrics is advantageous. For simplicity we limit ourselves to the comparison with the GOSPA2 metric (see Definition~\ref{def: graph-OSPA}b)) and comment on differences to the other metrics. 

\begin{example}[vertex features]\ 
\par \noindent
As it can be observed in Figure~\ref{Fig:IntroductionExampleFeatures} 
both the FGW distance and the metrics proposed in this article are able to take into account vertex attributes (here, the positions of the vertices) while the GOT distance is not able to detect the vertex differences. In many applications vertices contain additional information. For example, in many microbiological applications, cells are modeled as nodes while edges represent cell interactions. In such cases vertices capture positional and other cell information that is vital to the problem.
\begin{figure}[ht!]
    \centering
    \begin{subfigure}[t]{.4\linewidth}
    \centering
    \captionsetup{format=hang, justification=raggedright, singlelinecheck=off}
    \fbox{\includegraphics[width=0.8\linewidth]{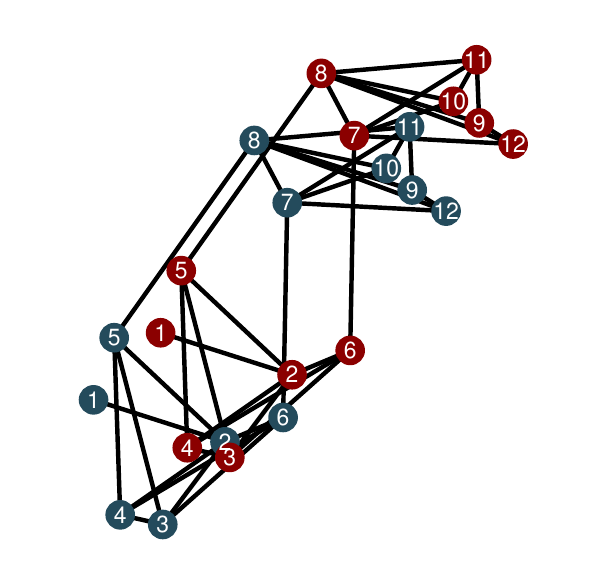}}
    \caption{$G_1$ and $G_1^{\textit{shift}}$\\
    $d_{GOT}(G_1,G_1^{\textit{shift}}) = 0$\\ $d_{FGW}(G_1,G_1^{\textit{shift}})= 0.12$\\ $d_{\mathbb{G},{R_2}}(G_1,G_1^{\textit{shift}})=0.424$}
    \end{subfigure}
    \caption{How do vertex features affect the metric? Translation of vertex positions by 0.424.}
  \label{Fig:IntroductionExampleFeatures}
\end{figure}
\end{example}

\begin{example}[splitting of mass]\ 
\par\noindent
Figure~\ref{Fig:IntroductionExampleMassSplit} illustrates the two different approaches of treating the ``masses'' of the vertices. For the FGW distance, the mass of any vertex in one graph is distributed freely to several vertices in the other graph. This can result in a splitting of mass for vertices and their incident edges. The approach taken in this article assigns mass $1$ to each vertex and (thereby) each edge  and suppresses the splitting of vertices and edges. 
This is essential when comparing fingerprint graphs for example, because the presence of doubled minutiae (and ridge lines) is an important distinctive feature for fingerprint identification. 
\begin{figure}[h!]
    \centering
    \begin{subfigure}[t]{.45\linewidth}
    \centering
    \captionsetup{format=hang,justification=raggedright, singlelinecheck=off}
    \includegraphics[width=.9\linewidth]{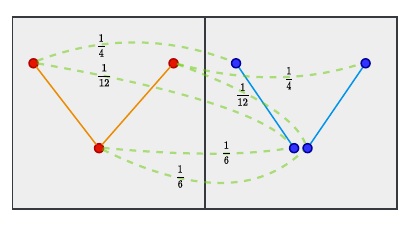}
    \caption{fused Gromov--Wasserstein distance:\\ mass is split freely.}
    \end{subfigure}
    \begin{subfigure}[t]{.45\linewidth}
    \centering
    \captionsetup{format=hang,justification=raggedright, singlelinecheck=off}
    \includegraphics[width=.9\linewidth]{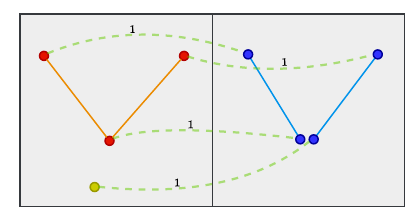}
    \caption{GOSPA2 metric: an auxiliary vertex is added and no mass is split.}
    \end{subfigure}
    \caption{To split or not to split?}
  \label{Fig:IntroductionExampleMassSplit}
\end{figure}
\end{example}

\begin{example}[global structure]\ \label{ex: Graph Structure}
\par \noindent
In Figure~\ref{Fig:IntroductionExampleNegative} the influence of the global structure is depicted. It can be seen that the GOT approach leads to a global comparison, 
i.e.\ edges that are important to the overall graph structure contribute more to the distance than edges that have a small effect on the global structure. 
The FGW distance in this example exhibits a similar behaviour which can be traced back to the underlying mass-split approach of this metric. Contrary to this, the metrics introduced in this paper are by default not able to detect this additional characteristic. 
Ignoring the influence on the global structure can be unfavorable for optimal routing problems in traffic networks for example.
\begin{figure}[ht!]
    \centering
    \begin{subfigure}[t]{0.3\linewidth}
    \centering
    \captionsetup{format=hang,justification=raggedright, singlelinecheck=off}
    \fbox{\includegraphics[width=.8\linewidth]{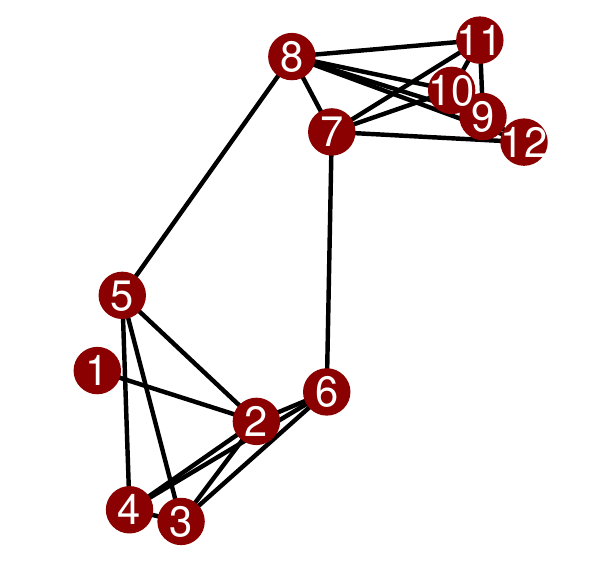}}
    \caption{$G_1$}
    \end{subfigure}
 \begin{subfigure}[t]{0.3\linewidth}
    \centering
    \captionsetup{format=hang,justification=raggedright, singlelinecheck=off}
    \fbox{\includegraphics[width=.8\linewidth]{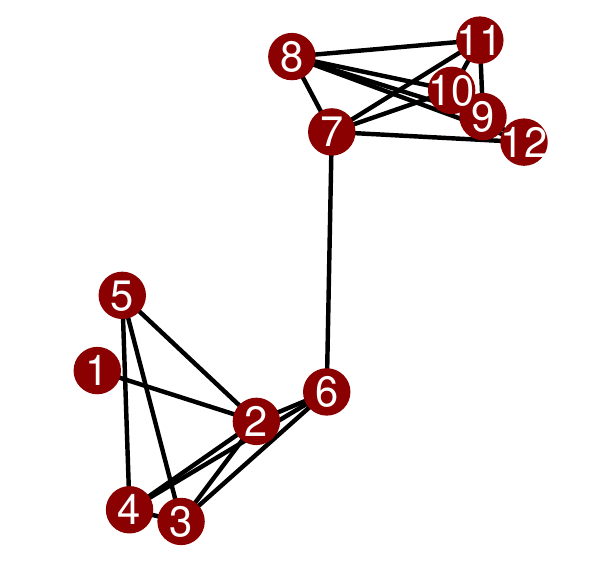}}
    \caption{$G_2$\\ $d_{GOT}(G_1,G_2) = 0.511$\\ $d_{FGW}(G_1,G_2)= 0.181$\\ $d_{\mathbb{G},{R_2}}(G_1,G_2)=0.008$}
    \end{subfigure}
 \begin{subfigure}[t]{0.3\linewidth}
    \centering
    \captionsetup{format=hang,justification=raggedright, singlelinecheck=off}
    \fbox{\includegraphics[width=.8\linewidth]{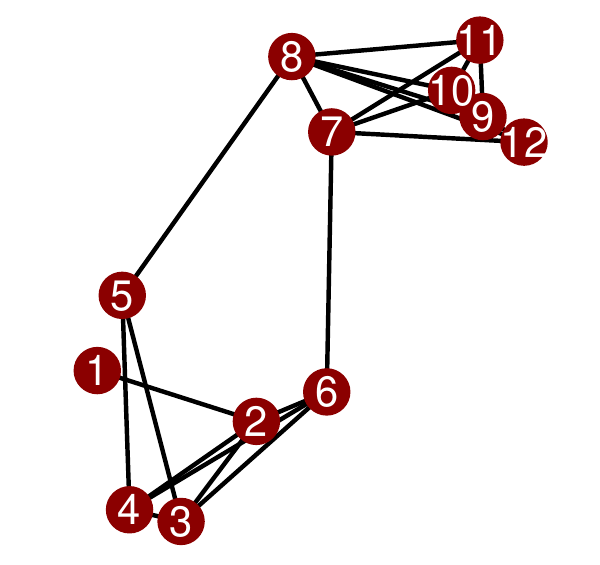}}
    \caption{$G_3$\\ $d_{GOT}(G_1,G_3) = 0.029$\\ $d_{FGW}(G_1,G_3)= 0.028$\\ $d_{\mathbb{G},{R_2}}(G_1,G_3)=0.008$}
    \end{subfigure}
    \caption{How does the global structure affect the metric? Omitting an edge that is vital for the overall connectivity versus omitting an edge that is not.}
  \label{Fig:IntroductionExampleNegative}
\end{figure}
\end{example}

\begin{example}[graph size]\ \label{ex: graph size}
\par \noindent
The impact of the graph size is demonstrated in Figure~\ref{Fig:IntroductionExampleRelative}, where two subgraphs of the graphs compared in Figure~\ref{Fig:IntroductionExampleNegative}
are studied. On a purely phenomenological level, spotting the difference between graphs $G_1$ and $G_3$ is much harder than spotting the difference between graphs $G_1'$ and $G_3'$. This observation is detected by all three metrics as they yield a higher dissimilarity between $G'_1$ and $G'_3$ than between $G_1$ and $G_3$. However, the change in GOT and FGW distance is small, while the GOSPA2 metric shows a significant increase in dissimilarity. This can be attributed to the nature of the influence of graph size on the metrics. 
While for the GOT and FGW distance the graph size has a (small) indirect influence that could be related to the fact that in a small graph, a single edge is more important for global structure than in a large graph, the influence of the graph size on the GOSPA2 metric is direct.

\begin{figure}[ht!]
    \centering
    \begin{subfigure}[t]{0.3\linewidth}
    \raggedleft
    \fbox{\includegraphics[width=.8\linewidth]{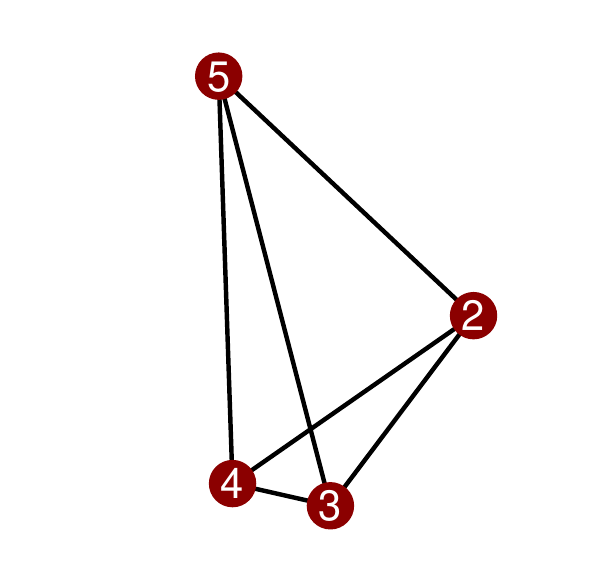}}
    \caption{$G'_1$}
    \end{subfigure}
    \hspace{8mm}
 \begin{subfigure}[t]{0.3\linewidth}
    \raggedright
    \hspace{5mm}
    \captionsetup{justification=raggedright, singlelinecheck=off}
    \fbox{\includegraphics[width=.8\linewidth]{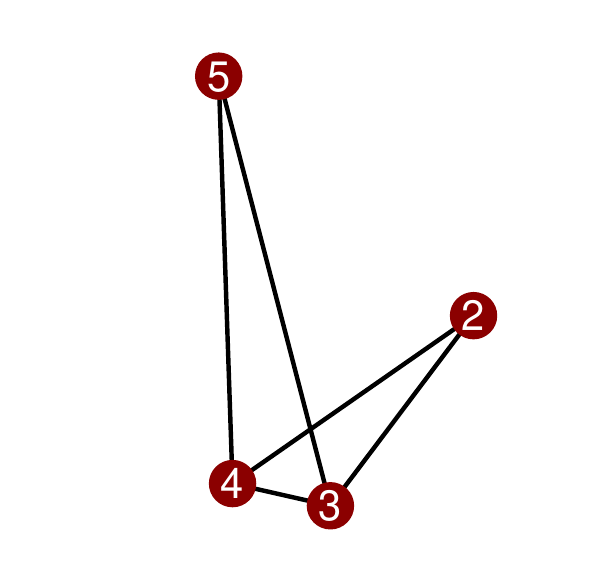}}
    \caption{$G'_3$\\ $d_{GOT}(G'_1,G'_3) = 0.043$\\ $d_{FGW}(G'_1,G'_3)= 0.041$\\ $d_{\mathbb{G},{R_2}}(G'_1,G'_3)=0.083$}
    \end{subfigure}
    \caption{How does the graph size affect the metric? Omitting an edge in a small graph versus omitting the same edge in a large graph (see Figure~\ref{Fig:IntroductionExampleNegative}).}
  \label{Fig:IntroductionExampleRelative}
\end{figure}
\end{example}

The plan of the paper is as follows. In Section~\ref{sec:theory} we define our new distance functions, prove that they are metrics and show a number of results about their properties and interrelations. In addition, we derive a new notion of random graph convergence and provide an application to point processes of vertices with independent edges. 
Section~\ref{sec:computation} gives computational considerations. In addition to a very slow exact method, we present an adaptation of the FAQ algortihm by \textcite{vogelstein2015} and a new auction algorithm with externalities. Section~\ref{sec:eval} evaluates these procedures in two simulated scenarios for various problem sizes. Finally, Section~\ref{sec:realdata} performs hypothesis tests for different structure of projection neurons across different areas (glomeruli) of the olfactory bulb in \textit{Drosophila} flies.

\section{Theory}
\label{sec:theory}

We first introduce a more formal framework for treating attributed undirected graphs without self-loops. It is convenient to consider pseudometric spaces of vertex and edge attributes in what follows, which allows to cover more general situations (see e.g.\ Remark~\ref{rem:why_pseudometrics}) but in general only leads to graph pseudometrics. For simplicity we continue to refer to these as graph metrics in the generic setting, where the attribute spaces are not further specified, although the metric property is strictly only true if the attribute spaces are metric as well.

Let $(\mcx,d_{\mcx})$ be a pseudometric space of vertex attributes, i.e.\ $d_{\mcx} \colon \mcx \times \mcx \to \R_{+}$ is symmetric and satisfies the triangle inequality and $d_{\mcx}(x,x) = 0$ for every $x \in \mcx$. Let $(\mcy,d_{\mcy})$ be another pseudometric space of edge attributes, with a distinguished element $y_0 \in \mcy$ signifying ``no edge''. We model a simple graph (i.e.\ undirected without loops) with vertex and edge attributes as a triple $([n], v, e)$ of a vertex set (identified with) $V := [n] := \{1,\ldots,n\}$ for some $n \in \N$, a map of vertex attributes $v \colon [n] \to \mcx$ and a symmetric map of edge attributes $e \colon [n]^2 \to \mcy$ satisfying $e(i,i) = y_0$ for every $i \in [n]$. The properties of $e$ reflect the fact that we consider undirected graphs without loops. We obtain the edge set as $E = \{ \{i,j\} \mvert e(i,j) \neq y_0 \}$, and set $v_i = v(i)$ and $e_{ij} = e(i,j)$ to lighten the notation. Furthermore write $\mathbb{G}$ for the set of all attributed simple graphs $([n],v,e)$ for given spaces $(\mcx,d_{\mcx})$ and $(\mcy,d_{\mcy})$.

We use the vertex set $V=[n]$ for notational convenience. The enumeration of vertex and edge attributes induced by this choice has no real importance. In particular, denoting by $S_n$ the set of permutations on $[n]$, we call two attributed simple graphs $([n],v,e)$, $([n],w,f)\in\mathbb{G}$ \emph{equal} and write $([n],v,e) =([n],w,f)$ if there exists a permutation $\pi\in S_n$ such that $v_i = w_{\pi(i)}$ and $e_{ij}=f_{\pi(i)\pi(j)}$ for all $i,j\in[n]$. In other words, our graphs are equivalence classes modulo relabeling of vertices, but come with a prespecified set of labels for convenience.


\subsection{Absolute metric}
\label{ssec:absolut_metric}

We build our metric from additive contributions of the individual vertex and edge distances based on an optimal (vertex) matching. This is similar to the idea used for the definition of the transport-transform (TT) metric between point patterns (finite counting measures), which was recently introduced in \textcite{MuellerEtAl2020}.

\begin{definition} \label{def:gttmetric}\ 
Let $(\mathcal{X}, d_{\mcx})$ and $(\mcy, d_{\mcy})$ be (pseudo-)metric spaces. For two attributed simple graphs $([m],v,e),([n],w,f)\in\mathbb{G}$
with $m \leq n$ (w.l.o.g.) we define the \textit{graph transport-transform (GTT) (pseudo-)metric} $d_{\mathbb{G},A}$ of order $p \geq 1$ and vertex penalty $C>0$ by
    \begin{align*}
    &d_{\mathbb{G},A}(([m],v,e),([n],w,f))\\
    &= \min\limits_{I,\pi} \, \biggl[ (m+n-2\abs{I}) C^p + \sum\limits_{i \in I} d_{\mcx}(v_i,w_{\pi(i)})^p\\
    &\hspace*{4mm} + \frac{1}{2} \sum\limits_{(i,i')\in I^2} d_{\mcy}(e_{i i'},f_{\pi(i) \pi(i')})^p + \frac{1}{2} \sum_{(i,i')\in [m]^2\setminus I^2 } d_{\mcy}(e_{ii'},y_0)^p + \frac{1}{2} \sum_{(j,j')\in [n]^2\setminus \pi(I)^2 } d_{\mcy}(y_0,f_{jj'})^p
    \biggr]^{1/p},
    \end{align*}
where the minimum is taken over $I\subset [m]$ and $\pi\in S_n$ and $\abs{I}$ denotes the cardinality of $I$.
\end{definition}

We defer the justification for calling $d_{\mathbb{G}}$ a (pseudo-)metric to Theorem~\ref{thm: graph-TT is metric} below. First we show that we may express Definition~\ref{def:gttmetric} equivalently in terms of an optimal matching problem between the two graphs enlarged to size $m+n$. This equivalence is also used for computing the metric in~Section~\ref{sec:computation}. 
\begin{proposition}\label{prop: GTT as optimal matching (filled to n+m)}
Let $([m],v,e),([n],w,f)\in\mathbb{G}$.  Consider auxiliary vertices $x_*\notin \mcx$ and extend the (pseudo-)metric by $d_{\mcx}(x,x_*)^p = d_{\mcx}(x_*,x)^p = C^p$ for all $x\in\mcx$. Enlarge each graph to size $m+n$ by setting $v_i = x_*$ for $m+1\leq i\leq m+n$ and $e_{ii'} = y_0$ for $m+1\leq i \vee i' \leq m+n$ and analogously $w_i = x_*$ for $n+1\leq i\leq m+n$ and $f_{ii'} = y_0$ for $n+1\leq i \vee i' \leq m+n$. Then
  \begin{align*}
   &d_{\mathbb{G},A}(([m],v,e),([n],w,f))= \!\min\limits_{\pi\in S_{m+n}} \biggl[ \sum_{i\in[m+n]}\! d_{\mcx}(v_i,w_{\pi(i)})^p +\frac{1}{2} \sum\limits_{(i,i')\in [m+n]^2} d_{\mcy}(e_{ii'},f_{\pi(i)\pi(i')})^p\biggr]^{1/p}.
  \end{align*}
\end{proposition}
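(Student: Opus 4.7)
The plan is to prove the identity by cost-preservingly matching feasible solutions in both directions: every pair $(I,\pi)$ with $I\subset[m]$, $\pi\in S_n$ lifts to a permutation $\sigma\in S_{m+n}$ with the same objective value, and every $\sigma\in S_{m+n}$ projects to a pair $(I,\pi)$ with the same objective value. Combining these two statements yields both inequalities and hence equality of the minima.

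\textbf{Lifting $(I,\pi)$ to $\sigma$.} I would set $\sigma(i)=\pi(i)$ for $i\in I$, map the $m-|I|$ leftover real first-graph vertices in $[m]\setminus I$ injectively into the second-graph dummies $\{n+1,\ldots,m+n\}$, map $n-|I|$ of the first-graph dummies $\{m+1,\ldots,m+n\}$ bijectively onto the leftover real second-graph vertices $[n]\setminus\pi(I)$, and complete $\sigma$ by an arbitrary bijection between the remaining $|I|$ first-graph dummies and the remaining $|I|$ second-graph dummies (the counts match since each side has $|I|$ left over). The vertex sum then becomes $\sum_{i\in I} d_{\mcx}(v_i,w_{\pi(i)})^p + (m-|I|)C^p + (n-|I|)C^p + 0$, using $d_{\mcx}(x_*,x_*)=0$ for the dummy-to-dummy matches, which reproduces the $(m+n-2|I|)C^p$ vertex penalty of Definition~\ref{def:gttmetric}.

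\textbf{Edge bookkeeping.} The main obstacle is reconciling the single double sum of the enlarged formulation with the three structured edge sums of the original. I would partition $[m+n]^2$ according to whether each of $(i,i')$ is real or dummy in the first graph and whether each of $(\sigma(i),\sigma(i'))$ is real or dummy in the second graph. Since $e_{ii'}=y_0$ exactly when not both $i,i'\in[m]$, $f_{\sigma(i)\sigma(i')}=y_0$ exactly when not both $\sigma(i),\sigma(i')\in[n]$, and $d_{\mcy}(y_0,y_0)=0$, only three classes contribute nonzero terms. Pairs $(i,i')\in I^2$ produce $\frac{1}{2}\sum_{(i,i')\in I^2} d_{\mcy}(e_{ii'},f_{\pi(i)\pi(i')})^p$. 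Pairs $(i,i')\in [m]^2\setminus I^2$ force $f_{\sigma(i)\sigma(i')}=y_0$ and produce $\frac{1}{2}\sum_{(i,i')\in [m]^2\setminus I^2} d_{\mcy}(e_{ii'},y_0)^p$. Finally, pairs $(i,i')\in J^2\setminus I^2$, where $J=\sigma^{-1}([n])$, have $e_{ii'}=y_0$ and, via the bijection $\sigma|_J\colon J\to[n]$, their images $(\sigma(i),\sigma(i'))$ range exactly over $[n]^2\setminus\pi(I)^2$, producing $\frac{1}{2}\sum_{(j,j')\in [n]^2\setminus\pi(I)^2} d_{\mcy}(y_0,f_{jj'})^p$. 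Summing these three reproduces the remaining edge terms of Definition~\ref{def:gttmetric}.

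\textbf{Projection from $\sigma$ to $(I,\pi)$.} Conversely, for any $\sigma\in S_{m+n}$, set $I:=\{i\in[m]:\sigma(i)\in[n]\}$ and let $\pi\in S_n$ be any permutation extending $\sigma|_I$; the cost does not depend on the extension since $\pi$ enters Definition~\ref{def:gttmetric} only through $\pi|_I$ and $\pi(I)$. The same case analysis, read in reverse, shows that the enlarged matching cost of $\sigma$ equals the GTT cost of $(I,\pi)$. Taking minima in both formulations then gives equality.
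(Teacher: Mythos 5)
Your proposal is correct and follows essentially the same route as the paper's proof: the lifting of $(I,\pi)$ to a permutation of $[m+n]$ matches the paper's construction exactly, and your real/dummy case partition of $[m+n]^2$ is the paper's decomposition into the sets $I$, $K=[m]\setminus I$, $L=\sigma^{-1}([n]\setminus\pi(I))$ and $M$ in different notation. The only cosmetic difference is that you phrase both directions as a cost-preserving correspondence between arbitrary feasible solutions, whereas the paper argues from an optimizer on each side; the content is the same.
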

Due to their technical nature, we defer this and most other proofs to the appendix. Instead we provide here a constructive approach to the GTT metric that relates to the proposition.

\begin{figure}[t!]
\hspace{3mm}
  \includegraphics[width=1\textwidth]{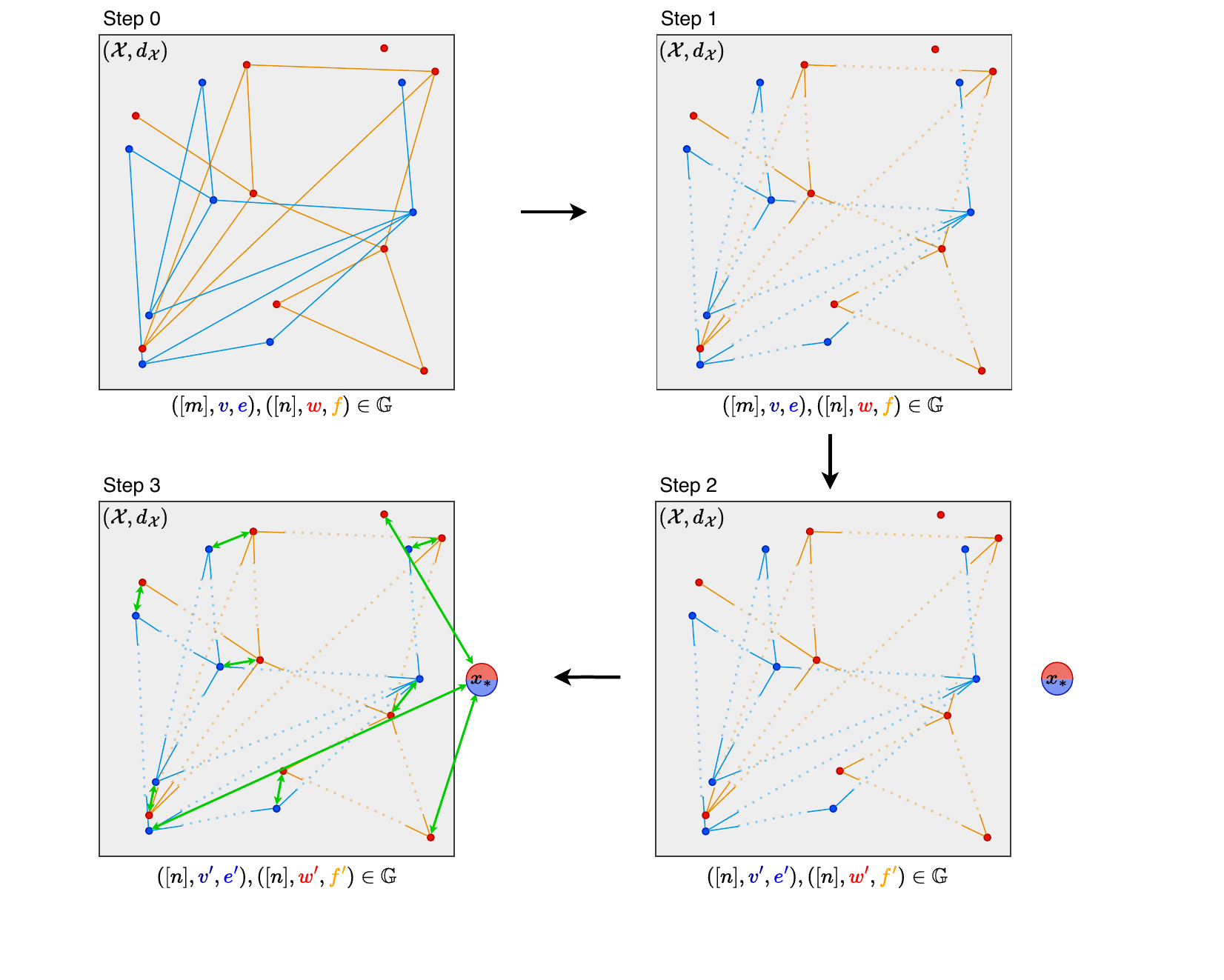}
  \vspace*{-12mm}
  
  \caption{Metric construction with reduced dependencies: Step 1: Split the edges into half-edges. Step 2: Add auxiliary vertices at $x_*$ (without edges) to both graphs. Step 3: Perform an optimal matching between the vertices taking into account their half-edge structure.}
  \label{Fig: Intuition Graph-TT}
\end{figure}

In Figure~\ref{Fig: Intuition Graph-TT} the construction of the metric in terms of the optimal matching in Proposition~\ref{prop: GTT as optimal matching (filled to n+m)} is illustrated. Having two graphs of possibly different sizes we start by replacing each edge by two half-edges. We add auxiliary vertices to both graphs, increasing their size to $m+n$. As no further edges are added, the auxiliary vertices are isolated. We can now perform an optimal matching where the cost of a vertex matching depends on the vertex location and the (half-)edge structure of the considered vertices. The enlargement of both graphs to size $m+n$ yields that two vertices can both be matched to an auxiliary vertex instead of being matched with each other. Such vertices correspond to the ones whose indices are not contained in the set $I$ from Definition~\ref{def:gttmetric}. Under the optimal $\pi$ in Proposition~\ref{prop: GTT as optimal matching (filled to n+m)}, the cost of a single vertex matching is naturally bounded by two times the cost of matching with an auxiliary vertex, e.g.\ for $p=1$, $\mcy := \{0,1\}$ and $d_\mcy(e,f) = \mathbbm{1}\{e \neq f\}$, by two times the cost $C$ of comparing to an auxiliary vertex plus the number of half-edges of both vertices (as auxiliary vertices are isolated).

Using Proposition~\ref{prop: GTT as optimal matching (filled to n+m)}, we show (in the appendix) that $d_{\mathbb{G},A}$ is a (pseudo-)metric.
\begin{theorem}\label{thm: graph-TT is metric}
The map $d_{\mathbb{G},A}$ is a pseudometric on $\mathbb{G}$. If $d_{\mcx}$ and $d_{\mcy}$ are metrics, then $d_{\mathbb{G},A}$ is also a metric.
\end{theorem}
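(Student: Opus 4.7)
The plan is to establish that $d_{\mathbb{G},A}$ satisfies the axioms of a pseudometric (and a genuine metric under the strengthened hypotheses on $d_\mcx$ and $d_\mcy$), working throughout with the equivalent enlarged-matching formulation of Proposition~\ref{prop: GTT as optimal matching (filled to n+m)}. Non-negativity is immediate, symmetry follows by mapping $(I,\pi)\mapsto(\pi(I),\pi^{-1})$, and $d_{\mathbb{G},A}(G,G)=0$ is obtained by choosing $I=[m]$ and $\pi=\mathrm{id}$ in Definition~\ref{def:gttmetric}.

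For the triangle inequality, the key preliminary observation is that the value of the assignment problem in Proposition~\ref{prop: GTT as optimal matching (filled to n+m)} is invariant under symmetric enlargement by additional auxiliary vertices: matching extra $x_*$'s to $x_*$'s contributes $0$ to both the vertex and the edge sums. Hence for three graphs $G_k=([m_k],v^{(k)},e^{(k)})$, $k=1,2,3$, I may represent all three pairwise distances as minima over $\pi\in S_N$ for a common size $N:=m_1+m_2+m_3$. Let $\pi_{12},\pi_{23}\in S_N$ be optimal for the pairs $(G_1,G_2)$ and $(G_2,G_3)$, set $\pi_{13}:=\pi_{23}\circ\pi_{12}$, and apply the triangle inequality to $d_\mcx$ and $d_\mcy$ termwise before invoking Minkowski's inequality for $\ell^p$-type sums over vertices and over ordered pairs of vertices. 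Since $\pi_{13}\in S_N$ is a feasible but not necessarily optimal assignment for $(G_1,G_3)$, this yields $d_{\mathbb{G},A}(G_1,G_3)\leq d_{\mathbb{G},A}(G_1,G_2)+d_{\mathbb{G},A}(G_2,G_3)$.

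The main obstacle is that the extension $d_\mcx(x,x_*):=C$ is not automatically a pseudometric on $\mcx\cup\{x_*\}$: the relation $d_\mcx(a,c)\leq 2C$ for $a,c\in\mcx$ may fail, and then the termwise triangle bound used above breaks exactly for those indices $i$ where $\pi_{12}$ sends a real vertex to an auxiliary one (and $\pi_{23}$ sends that auxiliary one to another real vertex). I would handle this by first reducing to the truncated pseudometric $\bar d_\mcx:=d_\mcx\wedge 2C$, whose extension to $\mcx\cup\{x_*\}$ is a genuine pseudometric, and showing that substituting $\bar d_\mcx$ for $d_\mcx$ in Definition~\ref{def:gttmetric} leaves the minimum unchanged via an exchange argument on the choice of $(I,\pi)$; the delicate point is that shrinking $I$ by one index affects not only the vertex penalty and cost but also reshuffles edge contributions between the matched and the two unmatched edge sums, so the argument must bound all of these simultaneously. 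An analogous but milder consideration applies to $d_\mcy$ relative to the distinguished attribute $y_0$.

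Finally, if $d_\mcx$ and $d_\mcy$ are genuine metrics, I show separation of points. A vanishing $d_{\mathbb{G},A}(G_1,G_2)=0$ forces an optimizer $\pi\in S_N$ of Proposition~\ref{prop: GTT as optimal matching (filled to n+m)} to make every vertex and every edge term equal to $0$. Since $C>0$, no real vertex can be matched to an auxiliary one, which forces $m_1=m_2$ and makes $\pi$ restrict to a bijection between the real-vertex index sets. The identities $v_i=w_{\pi(i)}$ and $e_{ii'}=f_{\pi(i)\pi(i')}$ then follow from the metric property of $d_\mcx$ and $d_\mcy$, giving $G_1=G_2$ as elements of $\mathbb{G}$ under the equivalence that identifies relabelings.
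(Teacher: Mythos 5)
Your overall architecture --- the easy properties, enlargement to a common size, composition of the two optimal permutations followed by termwise triangle inequalities and Minkowski --- is the same as the paper's, and your separation argument is fine. You have also put your finger on a genuine subtlety that the paper passes over lightly: the extension $d_{\mcx}(x,x_*)=C$ need not satisfy the triangle inequality on $\mcx\cup\{x_*\}$ when $\mathrm{diam}(\mcx)>2C$. However, the device you propose to resolve it fails. The lemma you would need --- that replacing $d_{\mcx}$ by $\bar d_{\mcx}=d_{\mcx}\wedge 2C$ in Definition~\ref{def:gttmetric} leaves the minimum unchanged --- is false. Take $p=1$, $C=1$, and two graphs each consisting of two vertices joined by an edge with the same attribute $y_1$, where both vertices of the first graph sit at $x_1$, both of the second at $x_2$, with $d_{\mcx}(x_1,x_2)=10$ and $d_{\mcy}(y_1,y_0)=100$. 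Under $d_{\mcx}$ the optimum is to match both vertex pairs at cost $20$ (unmatching a vertex would force its edge to be deleted and re-inserted at cost $200$), whereas under $\bar d_{\mcx}$ the same matching costs $4$; the two minima differ. The exchange argument you sketch cannot close this gap: removing an index from $I$ replaces a matched-edge term $d_{\mcy}(e_{ii'},f_{\pi(i)\pi(i')})^p$ by $d_{\mcy}(e_{ii'},y_0)^p+d_{\mcy}(y_0,f_{\pi(i)\pi(i')})^p$, which by the triangle inequality through $y_0$ is at least as large --- the reshuffled edge contributions move in the wrong direction, so truncating the vertex metric genuinely lowers the value.

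The repair has to act on the composed permutation rather than on the metric. Whenever the chain $i\mapsto\pi_1(i)\mapsto\pi_2(\pi_1(i))$ passes through an auxiliary vertex of the middle graph while both endpoints are real, re-route: send $i$ to an auxiliary vertex of the third graph and let $\pi_2(\pi_1(i))$ receive an auxiliary vertex of the first (the enlargement to size at least $m+n$ in Proposition~\ref{prop: GTT as optimal matching (filled to n+m)} is exactly what guarantees enough room). For such indices the direct vertex cost becomes $C^p+C^p$, equal to the chain's vertex cost, and the direct edge cost decomposes into deletions plus insertions, each of which is compared to the corresponding chain term using only the triangle inequality of $d_{\mcy}$ on $\mcy$ itself; for all remaining indices the ordinary triangle inequalities of $d_{\mcx}$ and $d_{\mcy}$ apply on the original spaces, so no pseudometric extension is ever invoked where it could fail. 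Two smaller points: your one-line justification of the enlargement invariance only yields ``enlarged $\leq$ original''; the converse requires rearranging an arbitrary optimal permutation so that the surplus auxiliary indices map to one another, which is the content of the paper's explicit $\pi'$ construction with the sets $A_j,B_j,C_j$. And for $p>1$ the effective cap on the vertex cost is $2C^p$ on $d_{\mcx}^p$, i.e.\ $2^{1/p}C$ on $d_{\mcx}$, not $2C$ --- though this is moot once the truncation route is abandoned.
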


The next two results relate the GTT metric to previous metrics.
\begin{proposition}\label{prop: GTT is special case fo GED }
For $p=1$ the GTT metric is a natural special case of a graph edit metric proposed by \textcite{bunke1983}.
\end{proposition}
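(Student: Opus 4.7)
The plan is to exhibit a concrete instance of Bunke's graph edit distance whose value coincides with $d_{\mathbb{G},A}$ for $p=1$. First I would recall the edit-distance setup: a valid transformation of $([m],v,e)$ into $([n],w,f)$ consists of a sequence of elementary operations—vertex substitution, vertex insertion, vertex deletion, edge substitution, edge insertion, edge deletion—and the edit distance is the infimum over all such sequences of the sum of their individual costs. I would then fix the following cost function: $c_v(v,w)=d_{\mcx}(v,w)$ for a vertex substitution; $c_v(v,\epsilon)=c_v(\epsilon,w)=C$ for vertex deletion and insertion; $c_e(e,f)=d_{\mcy}(e,f)$ for edge substitution; $c_e(e,\epsilon)=d_{\mcy}(e,y_0)$ for edge deletion; and $c_e(\epsilon,f)=d_{\mcy}(y_0,f)$ for edge insertion. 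Under this choice the usual admissibility conditions in Bunke's framework (non-negativity, symmetry, triangle-type inequalities on the edit costs) follow directly from the pseudometric properties of $d_{\mcx}$ and $d_{\mcy}$.

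Next I would reduce to a canonical form. Because the costs are additive and each operation acts independently on a specific vertex or pair of vertices, any edit sequence can be reordered at no change in cost into: (i) delete a subset $D\subseteq[m]$ of vertices together with every edge incident to them; (ii) pair up the remaining vertices of $G_1$ with a subset of vertices of $G_2$ via an injection, substitute vertex attributes and substitute each corresponding edge attribute; (iii) insert the remaining vertices of $G_2$ together with their incident edges. Such a canonical sequence is fully encoded by a partial injection from $[m]$ to $[n]$, equivalently by a pair $(I,\pi)$ where $I:=[m]\setminus D$ and $\pi\in S_n$ is any permutation extending the matching on $I$.

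Now I would compute the cost of the canonical sequence associated with $(I,\pi)$ term by term. The vertex operations contribute $\sum_{i\in I} d_{\mcx}(v_i,w_{\pi(i)}) + (m-|I|)C + (n-|I|)C$, and $(m-|I|)+(n-|I|)=m+n-2|I|$. The edge operations split into three groups: (a) matched edges between pairs in $I^2$, contributing $\tfrac12\sum_{(i,i')\in I^2} d_{\mcy}(e_{ii'},f_{\pi(i)\pi(i')})$ (the factor $\tfrac12$ accounts for each unordered pair being counted twice; the diagonal terms vanish since $e_{ii}=f_{jj}=y_0$); (b) edges of $G_1$ with at least one endpoint deleted, contributing $\tfrac12\sum_{(i,i')\in[m]^2\setminus I^2} d_{\mcy}(e_{ii'},y_0)$; and (c) edges of $G_2$ with at least one endpoint inserted, contributing $\tfrac12\sum_{(j,j')\in[n]^2\setminus \pi(I)^2} d_{\mcy}(y_0,f_{jj'})$. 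Summing all contributions reproduces exactly the bracket in Definition~\ref{def:gttmetric} at $p=1$.

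Finally, taking the minimum over $(I,\pi)$ on both sides, I would conclude that the Bunke edit distance under the above cost function equals $d_{\mathbb{G},A}$ for $p=1$. The only non-routine step is the reduction-to-canonical-form argument, which I expect to be the main (though mild) obstacle: one must check that any sequence mixing insertions, deletions and substitutions in an arbitrary order can be replayed in the canonical order without increasing cost, using that each edge-incidence is handled independently once its endpoints' fates (matched, deleted, or inserted) are fixed.
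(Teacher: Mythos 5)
Your proposal is correct and follows the same basic strategy as the paper's proof: assign edit costs $C$ to vertex insertion and deletion, $d_{\mcx}$ to vertex substitution, and $d_{\mcy}(\cdot\,,y_0)$, $d_{\mcy}(y_0,\cdot)$, $d_{\mcy}(\cdot\,,\cdot)$ to edge deletion, insertion and substitution, and then identify edit paths with the pairs $(I,\pi)$ from Definition~\ref{def:gttmetric}. Where you genuinely diverge is in how the class of admissible edit paths is tamed. The paper simply declares every edge operation that is not induced by the vertex matching to have infinite cost (and even lets the edge substitution cost reference the optimal permutation $\pi_*$), so that only matching-induced edit paths are competitive by construction; the correspondence with $(I,\pi)$ is then asserted rather than derived. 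You instead keep all operations at their natural finite costs and prove a reduction-to-canonical-form step, using the triangle inequalities for $d_{\mcx}$ and $d_{\mcy}$ (e.g.\ delete-then-insert of an edge costs $d_{\mcy}(e,y_0)+d_{\mcy}(y_0,f)\geq d_{\mcy}(e,f)$, and a chain of substitutions is never cheaper than a direct one) to show that an arbitrary edit sequence can be replayed as delete--substitute--insert without increasing cost. Your route is the more self-contained one: it actually justifies why the infimum over all of Bunke's edit paths is attained on matching-induced paths, which is the only non-trivial content of the proposition, at the price of having to verify the reordering argument carefully. The paper's route is shorter, but buys this by restricting the cost function in a way that is arguably less ``natural'' (infinite costs, and a substitution cost defined in terms of the optimizer).
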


\begin{proposition} \label{prop:gtt_vs_tt}
  Suppose that $p \geq 1$ and $d_{\mcy} \equiv 0$ with $C_{\mcy}=0$. For $([m],v,e),([n],w,f)\in\mathbb{G}$, we have
  \begin{equation*}
      d_{\mathbb{G},A}(([m],v,e),([n],w,f)) = \tau(\xi,\eta),
  \end{equation*}
  where $\tau$ denotes the TT metric between finite counting measures with penalty $C_i \geq C_{\mcx}$ and ${\xi = \sum_{i=1}^m \delta_{v_i}}$, $\eta = \sum_{j=1}^n \delta_{w_j}$ are the counting measures corresponding to the node sets $([m],v)$ and $([n],w)$, respectively.
\end{proposition}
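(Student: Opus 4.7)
The plan is to unpack Definition~\ref{def:gttmetric} under the hypothesis $d_{\mcy} \equiv 0$, which forces every summand containing $d_{\mcy}$ to vanish. What remains is
\begin{equation*}
d_{\mathbb{G},A}\bigl(([m],v,e),([n],w,f)\bigr)^p = \min_{\substack{I \subset [m]\\ \pi \in S_n}} \biggl[ (m + n - 2|I|)\, C^p + \sum_{i \in I} d_{\mcx}(v_i, w_{\pi(i)})^p \biggr].
\end{equation*}
Since $\pi$ enters this quantity only through its restriction to $I$, the minimum can equivalently be taken over all pairs $(I,\sigma)$ with $I \subset [m]$ and $\sigma \colon I \hookrightarrow [n]$ injective, and the edge attribute maps $e, f$ no longer play any role whatsoever.

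Next, I would identify this reduced expression with the TT metric of \textcite{MuellerEtAl2020} evaluated on $\xi = \sum_{i=1}^m \delta_{v_i}$ and $\eta = \sum_{j=1}^n \delta_{w_j}$. The most transparent route is through the auxiliary-point representation that mirrors Proposition~\ref{prop: GTT as optimal matching (filled to n+m)}: enlarge each counting measure by copies of an abstract point $x_\ast$ placed at distance equal to the penalty from every element of $\mcx$, and then minimize the $\ell_p$ assignment cost over all bijections of the enlarged patterns. The resulting cost structure --- a contribution of $C^p$ per unmatched original point, $0$ per auxiliary-to-auxiliary match, and $d_{\mcx}(v_i, w_{\sigma(i)})^p$ per matched pair --- reproduces exactly the reduced formula above. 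The hypothesis $C_i \geq C_{\mcx}$ is the standard compatibility condition in \textcite{MuellerEtAl2020} ensuring that the TT penalty governs unmatched mass in the same way as the GTT vertex penalty.

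I anticipate no substantive obstacle: the heart of the proof is a single substitution together with the observation that a permutation of $[n]$ only matters through its restriction to $I$. The only delicate point is to align the two parametrizations --- in particular, to pin down whether the TT penalty in \textcite{MuellerEtAl2020} is applied per unmatched point or enters via an implicit cap on matched distances --- so that the two defining formulas can be read off as literally the same expression. Once that bookkeeping is settled, the claimed equality is immediate.
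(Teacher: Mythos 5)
Your proposal is correct and coincides with the paper's treatment: the paper gives no separate proof of Proposition~\ref{prop:gtt_vs_tt}, regarding it as a direct consequence of the definitions, and your argument is precisely that direct consequence spelled out — all $d_{\mcy}$-terms vanish, the permutation $\pi$ matters only through its restriction to $I$, and the surviving expression is the matching formulation of the TT metric with the same penalty. Your remark about reconciling the per-unmatched-point penalty with a cap on matched distances is the right bookkeeping point, and it is harmless since a pair matched at $p$-th-power cost exceeding $2C^p$ would never be included in an optimal $I$ anyway.
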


\begin{remark} \label{rem:why_pseudometrics}
  The fact that we allow $d_{\mcx}, d_{\mcy}$ to be pseudometrics rather than metrics allows us to ignore 
  the distinction of elements in $\mcx$ and $\mcy$ or even the distinction of whole components of multivariate attributes when computing graph distances. In particular we obtain for $d_{\mcy} \equiv 0$
  once more the TT metric between the vertices of the graphs and for $d_{\mcx} \equiv 0$ a purely edge-based metric, e.g.\ for $d_{\mcy}(e_{ij}, f_{ij}) = \abs{d_{\mcx}(v_i,v_j)-d_{\mcx}(w_i,w_j)}$ the Gromov--Monge pseudometric, a variant of the Gromov--Wasserstein pseudometric that prohibits splitting of mass, see~\textcite{MemoliNeedham2022}.
\end{remark}

\subsection{Relative metrics}
As illustrated in the introduction there are contexts where normalization by the total number of points (and edges) is more appropriate (see Example~\ref{ex: graph size}). Among other things this leads to a weaker (pseudo-)metric that still metrizes weak convergence of spatial graphs, see Theorem~\ref{thm: Metrization of weak convergence} below. For differing cardinalities it seems natural to divide by the maximal number of points (and edges), as illustrated e.g.\ by the extension of the smaller to the larger graph when adding dummy points. We first note that this is not readily possible based on Definition~\ref{def:gttmetric}. 
\begin{remark}
  Suppose that $\mcx$ contains two points $x_1,x_2$ at distance $d_{\mcx}(x_1,x_2) \geq 2 C$ for $C>0$. Let $m=n=1$ and $l=2$ and choose $v_1 =x_1$, $w_1 = x_2$, $u_i= x_i$ for $i=1,2$ as well as $e\equiv y_0$, $f\equiv y_0$, $g\equiv y_0$. It is easily checked that 
  \begin{align*}
    \frac{1}{\max\{m,n\}} &d_{\mathbb{G},A}(([m],v,e),([n],w,f)) = 2 C \\
    &> \frac12 C + \frac12 C \\
    &= \frac{1}{\max\{m,l\}} d_{\mathbb{G},A}(([m],v,e),([l],u,g))
    + \frac{1}{\max\{l,n\}} d_{\mathbb{G},A}(([l],u,g),([n],w,f)). 
  \end{align*}
  This example shows that simple dividing of the GTT metric by the maximal number of vertices will in general not result in a metric again. Since our example did not involve edges, this implies the same statement for the original (point pattern) TT-metric. In particular it follows with Proposition~\ref{prop:gtt_vs_tt} that the map $\overline{\tau}$ proposed in \textcite{MuellerEtAl2020} is not a metric on the space $\mfn_{\mathrm{fin}}$ of finite counting measures
  if $C$ is too small with respect to the diameter of $\mcx$. The above counterexample remains valid for order $p \in [1,2)$.
\end{remark}

A convenient approach is obtained by enforcing the maximal number of vertex matches between the graphs, which does result in metric as long as we suitably bound the underlying metrics between vertices and edges. Ignoring the edges, this would result in the Optimal SubPattern Assignment (OSPA) metric introduced in \textcite{SXia2008} and popularized in \textcite{SVoVo2008}, see Proposition~\ref{prop:gospa_vs_ospa} below. Taking edges into account, there are (at least) two ways how we may penalize the (non-)edges of unmatched vertices in the larger graph.

A cruder option leading to a stronger metric is obtained by adding a fixed penalty $C_{\mcy}$ (in addition to the vertex penalty $C_{\mcx}$) for each other vertex that an unmatched vertex could have an edge with, i.e.\ regardless of the actual edge structure of the unmatched vertex. We refer to the resulting map as \emph{GOSPA1 metric}. 
From a modelling point of view this may sometimes be the right choice if the focus lies on penalizing the loss of information incurred by missing individual vertices and their edge information. 

A refined option leading to a weaker metric is obtained by penalizing only the actual edges of the unmatched vertices according to the specified cost incurred for matching an edge with a non-edge. 
We refer to the resulting map as \emph{GOSPA2 metric}. 
In this approach a lone unmatched vertex without any edges will be penalized much more lightly than an ``influential'' unmatched vertex with a lot of edges.

We refer to these metrics jointly as \emph{Graph OSPA (GOSPA) metrics} and 
again defer justification of the term  (pseudo-)metric to later, see Theorem~\ref{thm: graph-OSPA are metric}.

\begin{definition} \label{def: graph-OSPA}
Let $(\mathcal{X}, d_{\mcx})$ and $(\mcy, d_{\mcy})$ be (pseudo-)metric spaces satisfying $\mathrm{diam}(\mcx) \leq C_{\mcx}$ and $\mathrm{diam}(\mcy) \leq C_{\mcy}$. For $([m],v,e),([n],w,f)\in\mathbb{G}$ with $n\geq \max\{m,1\}$ (w.l.o.g.), define
\begin{itemize}
    \item[a)] the \textit{GOSPA1 (pseudo-)metric} $d_{\mathbb{G},R_1}$ of order $p \geq 1$ and penalty $C_1^p \geq C_{\mcx}^p + \frac12 C_{\mcy}^p$ by 
      \begin{align*}
  d_{\mathbb{G},R_1}(&([m],v,e),([n],w,f))  \\[1mm] 
  &=\frac{1}{n^{1/p}}\min\limits_{\pi \in S_n} \biggl[ (n-m) C_1^p + \sum_{i\in [m]} d_{\mcx}(v_i, w_{\pi(i)})^p\\
  &\hspace*{14mm}+ \frac12 \frac{1}{n-1} \sum_{i\in [m]}  \biggl(\sum\limits_{i'\in [m] } d_{\mcy}(e_{ii'},f_{\pi(i)\pi(i')})^p +  (n-m) C_{\mcy}^p \biggr) \biggr]^{1/p}.  
       \end{align*}
    \item[b)] the \textit{GOSPA2 (pseudo-)metric} $d_{\mathbb{G},R_2}$ of order $p \geq 1$ and penalty $C_2^p \geq C_{\mcx}^p + C_{\mcy}^p$, additionally requiring for $p>1$ that ${d_{\mcy}(y_1,y_2) \leq \max\{d_{\mcy}(y_1,y_0),d_{\mcy}(y_0,y_2)\}}$ for all $y_1,y_2\in\mcy$, by 
    \begin{align*}
  d_{\mathbb{G},R_2}(&([m],v,e),([n],w,f)) \\[1mm] 
  &= \frac{1}{n^{1/p}}\min\limits_{\pi \in S_n} \biggl[ (n-m) C_2^p + \sum_{i\in[m]} d_{\mcx}(v_i, w_{\pi(i)})^p  \\
&\hspace*{10mm} + \frac12  \frac{1}{n-1} \biggl(\sum_{(i,i')\in[m]^2}   d_{\mcy}(e_{ii'},f_{\pi(i)\pi(i')})^p +  \sum_{(i,i')\in [n]^2\setminus[m]^2} d_{\mcy}(y_0,f_{\pi(i)\pi(i')})^p \biggr)\biggr]^{1/p}.  
  \end{align*}
\end{itemize}
  We set $0/0 := 0$. So for $m \leq n=1$ we obtain $d_{\mathbb{G},R_i}(([m],v,e),([n],w,f)) = \min\limits_{\pi \in S_n} \bigl[ (1-m) C_i^p+ \sum_{i=1}^m  d_{\mcx}(v_i, w_{\pi(i)})^p  \bigr]^{1/p}$, $i=1,2$. For $m=n=0$, we set $d_{\mathbb{G},R_i}(([0],v,e),([0],w,f)) := 0$.
\end{definition}

\begin{remark}
It may seem desirable to relax the condition on $C_2$ 
to $C_2\geq C_{\mcx} +\frac12 C_{\mcy}$ or even $C_2\geq C_{\mcx}$. However, this is not possible as the following counterexample shows:\\ 
Set $p=1$ and $C_2= C_{\mcx} + \frac{1}{2} C_{\mcy}$ and assume there exist $x_1,x_2,x_3\in \mcx$ and $y_1\in\mcy$ such that $d_{\mcx}(x_3,x_i) = C_{\mcx}$ for $i=2,3$, as well as $d_{\mcy}(y_0,y_1) = C_{\mcy}$. Consider the three graphs $([1],v,e),([2],w,f),([3],u,g)\in\mathbb{G}$ where $v_1 = x_3$, $e_{11}=y_0$ and $w_i = x_i$ for $i\in[2]$, $ f_{12}=f_{21} = y_1$ and $f_{ij} = y_0$ otherwise as well as 
$u_i = x_i$ for $i\in[3]$, $g_{12}=g_{21} = y_1$ and $g_{ij} = y_0$ otherwise. Then 
\begin{align*}
    d_{\mathbb{G},R_2}(([m],v,e),([n],w,f)) &= \frac12 ( C_2 + C_{\mcx} + d_{\mcy}(y_0,y_1)) = C_2 + \frac{1}{4}C_{\mcy} \\
    &> C_2 + \frac{1}{6}C_{\mcy}= \frac{1}{3} \biggl(2C_2 + \frac12 d_{\mcy}(y_0,y_1)\biggr) + \frac{1}{3}C_2\\ &=d_{\mathbb{G},R_2}(([m],v,e),([l],u,g)) +d_{\mathbb{G},R_2}(([l],u,g),([n],w,f)),
\end{align*}
which contradicts the triangle inequality.
\end{remark}

\begin{remark}
The additional condition on the underlying edge metric $d_{\mcy}$ is only required for $p > 1$ as a consequence of the proof strategy. It is fulfilled whenever two edges (with possibly different attributes) are closer together than the maximum of the distance of each of these edges to the no-edge element $y_0$, for example for $d_{\mcy}(e_{ij},f_{ij}) = \abs{e_{ij}-f_{ij}}$ for $e_{ij},f_{ij}\in \mcy = \R_+$, $y_0=0$.
\end{remark}


The GOSPA metrics are equivalent to solving an optimal matching problem of two graphs of the same size. Again, this correspondence is used in Section~\ref{sec:computation} to calculate the GOSPA distances of two graphs. 
\begin{proposition} \label{prop: proof of GOSPA1 and GOSPA2 filling up to same size does not change} 
Let $([m],v,e),([n],w,f)\in\mathbb{G}$. Consider auxiliary vertices $x_*\notin \mcx$ and auxiliary edges $y_*\notin \mcy$.
\begin{itemize}
    \item[a)] For GOSPA1 extend the attribute (pseudo-)metrics by $d_{\mcx}(x,x_*)^p = d_{\mcx}(x_*,x)^p = C_1^p-\frac{1}{2}C_{\mcy}^p$ for all $x\in\mcx$ and $d_{\mcy}(y,y_*) = d_{\mcy}(y_*,y) = C_{\mcy}$ for all $y\in\mcy$, respectively. Enlarge the smaller graph (w.l.o.g.) $([m],v,e)$ to the size of the larger graph by setting $v_i = x_*$ and $e_{ii} = y_0$ for $m+1\leq i\leq n$ and $e_{ii'} = y_*$ for $m+1\leq i \vee i' \leq n$ with $i\neq i'$.  Then
    \begin{align*}
    d_{\mathbb{G},R_1}(&([m],v,e),([n],w,f)) \\
    &=\frac{1}{n^{1/p}}\min\limits_{\pi \in S_n} \biggl[\sum_{i\in[n]} d_{\mcx}(v_i, w_{\pi(i)})^p+ \frac12 \frac{1}{n-1}\sum\limits_{(i,i')\in[n]^2} d_{\mcy}(e_{ii'},f_{\pi(i)\pi(i')})^p \biggr]^{1/p}\\
         &= d_{\mathbb{G},R_1}(([n],v,e),([n],w,f)).
    \end{align*}
    \item[b)] For GOSPA2 extend the vertex attribute (pseudo-)metric by $d_{\mcx}(x,x_*)^p = d_{\mcx}(x_*,x)^p = C_2^p$ for all $x\in\mcx$. Enlarge the smaller graph (w.l.o.g.) $([m],v,e)$ to the size of the larger graph by setting $v_i = x_*$ for $m+1\leq i\leq n$ and $e_{ii'} = y_0$ for $m+1\leq i \vee i' \leq n$. Then
    \begin{align*}
    d_{\mathbb{G},R_2}(&([m],v,e),([n],w,f)) \\
    &=\frac{1}{n^{1/p}}\min\limits_{\pi \in S_n} \biggl[\sum_{i\in [n]} d_{\mcx}(v_i, w_{\pi(i)})^p+ \frac12 \frac{1}{n-1}\sum\limits_{(i,i')\in[n]^2} d_{\mcy}(e_{ii'},f_{\pi(i)\pi(i')})^p \biggr]^{1/p}\\
         &= d_{\mathbb{G},R_2}(([n],v,e),([n],w,f)).
\end{align*}
\end{itemize}
\end{proposition}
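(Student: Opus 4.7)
The plan is to verify each equality by direct substitution of the extended attribute data into the right-hand side and then matching it term by term against the defining formula of Definition~\ref{def: graph-OSPA}. The second equality in each part -- saying that the enlarged expression coincides with $d_{\mathbb{G},R_i}$ evaluated on two graphs of size $n$ -- then follows by simply applying the definition again with the first graph treated as having size $n$, so that the $(n-m)C_i^p$ term vanishes.

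For \textbf{part b)}, the check is essentially bookkeeping. Since $v_i = x_*$ and $d_{\mcx}(x_*, w_{\pi(i)})^p = C_2^p$ for every $i > m$, the vertex sum over $[n]$ decomposes as $\sum_{i\in[m]} d_{\mcx}(v_i,w_{\pi(i)})^p + (n-m)C_2^p$. For the edge sum one splits $[n]^2$ into $[m]^2$ and its complement; on the complement $e_{ii'}=y_0$, whence $d_{\mcy}(e_{ii'},f_{\pi(i)\pi(i')})^p = d_{\mcy}(y_0,f_{\pi(i)\pi(i')})^p$, and the two edge contributions in Definition~\ref{def: graph-OSPA}b) appear verbatim. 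Because the extended expression depends on $\pi\in S_n$ only through $\pi|_{[m]}$ and $\pi([m])$ (nothing distinguishes the auxiliary indices $\{m+1,\ldots,n\}$), the minima over $S_n$ on the two sides coincide.

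For \textbf{part a)}, the same substitution scheme applies but requires an algebraic reconciliation. The vertex sum now yields $\sum_{i\in[m]} d_{\mcx}(v_i,w_{\pi(i)})^p + (n-m)\bigl(C_1^p-\tfrac12 C_{\mcy}^p\bigr)$. The extended edge sum splits into three parts: the $[m]^2$ block (unchanged); the diagonal pairs $i=i'>m$, contributing $0$ since $e_{ii}=y_0=f_{\pi(i)\pi(i)}$; and the remaining $(n-m)(n+m-1)$ off-diagonal pairs with $i\vee i'>m$ and $i\neq i'$, each contributing $d_{\mcy}(y_*,f_{\pi(i)\pi(i')})^p = C_{\mcy}^p$. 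After dividing the edge block by $2(n-1)$ and collecting coefficients of $C_{\mcy}^p$, the identity to verify reduces to
\begin{equation*}
-\tfrac12(n-m) + \frac{(n-m)(n+m-1)}{2(n-1)} = \frac{m(n-m)}{2(n-1)},
\end{equation*}
which is precisely the $C_{\mcy}^p$-coefficient in Definition~\ref{def: graph-OSPA}a). This cancellation is the only non-routine step; it also explains the somewhat unusual shift $-\tfrac12 C_{\mcy}^p$ in the extended vertex penalty, whose role is exactly to offset the surplus auxiliary-edge terms produced by enlarging the graph.
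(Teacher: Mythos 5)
Your proposal is correct and follows essentially the same route as the paper's proof: direct substitution of the auxiliary attributes followed by term-by-term matching against Definition~\ref{def: graph-OSPA}, with the only nontrivial step in part a) being the cancellation $-\tfrac12(n-m)+\tfrac{(n-m)(n+m-1)}{2(n-1)}=\tfrac{m(n-m)}{2(n-1)}$ of the $C_{\mcy}^p$-coefficients, which the paper performs by the equivalent count $m(n-m)+(n-m)(n-1)=(n-m)(n+m-1)$ auxiliary edge pairs. Your closing observation that the $-\tfrac12 C_{\mcy}^p$ shift in the vertex penalty exactly offsets the surplus auxiliary-edge terms is a nice added piece of intuition not spelled out in the paper's proof.
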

As usual the proof can be found in the appendix. In what follows we present a constructive approach to the GOSPA metrics that relates to the proposition.

\begin{figure}[t!]
\hspace{-7mm}
    \includegraphics[width=1\textwidth]{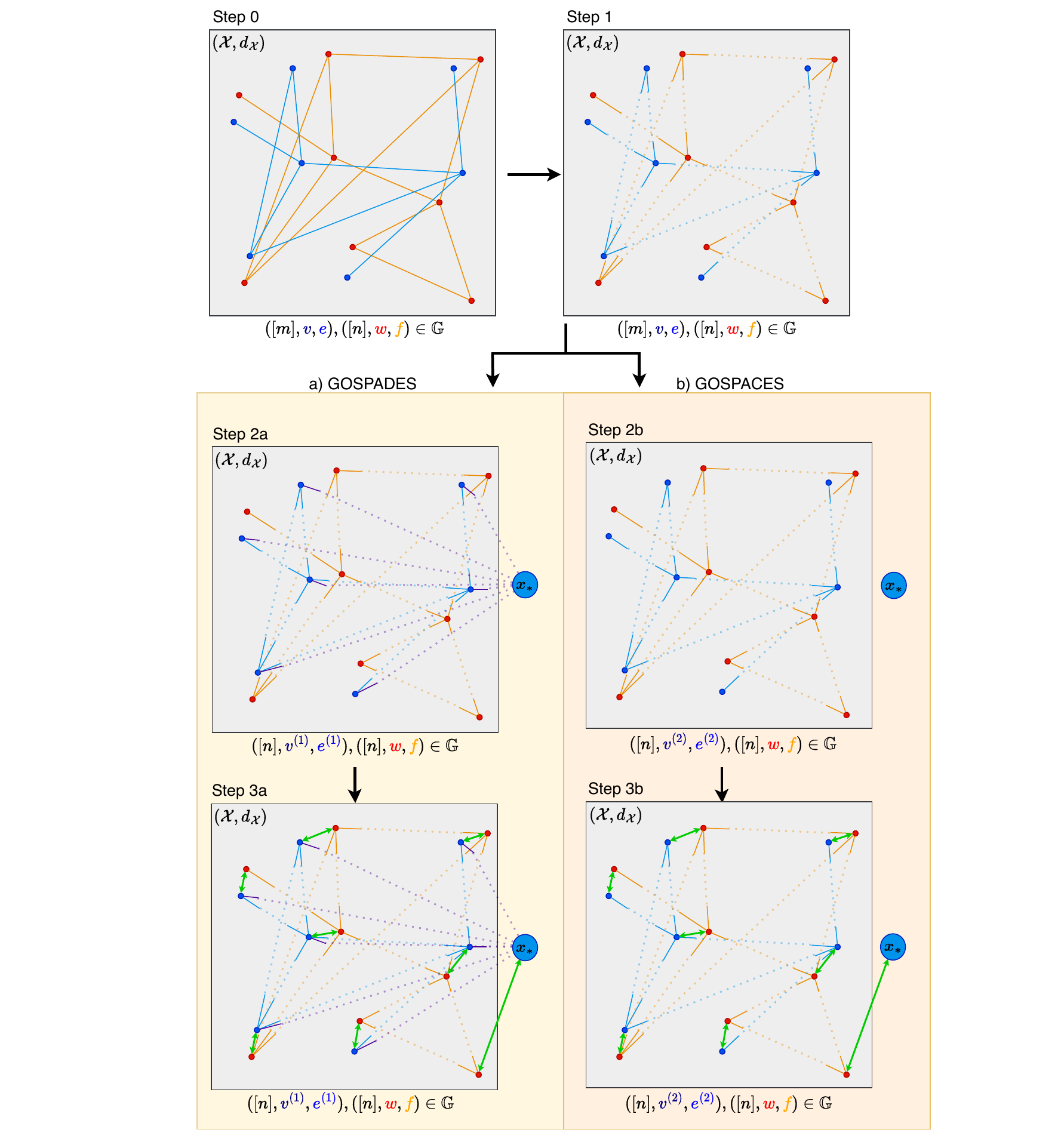}
    \caption{Metric construction for GOSPA metrics: Step 1: Split the edges into half-edges. a) GOSPA1: Step 2a: Add auxiliary vertices at $x_*$ and connect each auxiliary vertex to all other vertices of the graph using auxiliary edges. Step 3a: Perform an optimal matching between the vertices taking into account their half-edge structure. b) GOSPA2: Step 2b: Add auxiliary vertices at $x_*$ (without edges). Step 3b: Perform an optimal matching between the vertices taking into account their (half-)edge structure.}
  \label{Fig: Intuition Graph-OSPA}
\end{figure}

In Figure~\ref{Fig: Intuition Graph-OSPA} the enlargement construction of the GOSPA1 and the GOSPA2 metric is illustrated. Starting with two graphs of possibly different sizes, we divide each edge into two half-edges. Then we fill up the smaller graph using auxiliary elements.
This enlargement differs for the GOSPA1 and the GOSPA2 metric. In the case of the GOSPA1 metric we fill up the graph $([m],v,e)$ to a graph $([n],v,e)$ by adding $n-m$ auxiliary vertices (at cost $C_1^p-\frac12 C_{\mcy}^p$) and connecting each of these vertices with every other vertex using auxiliary edges (see Step~2a). 
We remark that the properties of the auxiliary edge (i.e. $d_{\mcy}(y,y_*) = C_{\mcy}$ for all $y\in\mcy$) correspond to the disregarding of the edge-structure of auxiliary vertices in the GOSPA1 metric, since every matching of a vertex to an auxiliary vertex is penalized with constant cost $C_1^p\geq C_{\mcx}^p +\frac12 C_{\mcy}^p$ independent of the (half-)edge structure of this (non-auxiliary) vertex.\\
For the GOSPA2 metric we fill up the graph $([m],v,e)$ to a graph $([n],v,e)$ by again adding $n-m$ auxiliary vertices (at cost $C_2^p$) but then not adding any edge connecting these auxiliary vertices (see Step 2b). 
In this way we take into account the edge structure of vertices matched with auxiliary vertices, since for each such match there is a constant cost $C_2^p\geq C_{\mcx}^p + C_{\mcy}^p$ and an additional cost proportional to the number of (half-)edges this (non-auxiliary) vertex has.\\
Both enlargements yield two graphs of the same size. Hence, we are able to perform an optimal matching between the vertices of the two graphs. The cost of this optimal matching depends on the way we filled up and corresponds to the penalties in the definition of the GOSPA metrics.\\ 

Using Proposition \ref{prop: proof of GOSPA1 and GOSPA2 filling up to same size does not change}, we show (in the appendix) that $d_{\mathbb{G},R_i}$ are (pseudo-)metrics.
\begin{theorem}\label{thm: graph-OSPA are metric}
The maps $d_{\mathbb{G},R_1}$ and $d_{\mathbb{G},R_2}$ are pseudometrics on $\mathbb{G}$. If $d_{\mcx}$ and $d_{\mcy}$ are metrics, then $d_{\mathbb{G},R_1}$ and $d_{\mathbb{G},R_2}$ are also metrics.
\end{theorem}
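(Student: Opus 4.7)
The plan is to verify the metric axioms one by one, leveraging the clean matching formulation established in Proposition~\ref{prop: proof of GOSPA1 and GOSPA2 filling up to same size does not change}. Non-negativity and symmetry are immediate from the defining formulas: every summand is non-negative, and swapping the two graphs amounts to replacing $\pi$ by $\pi^{-1}$ and relabelling the summation, which leaves the minimum unchanged. Vanishing on the diagonal is witnessed by the identity permutation (with no enlargement, since $m=n$). When $d_{\mcx}$ and $d_{\mcy}$ are genuine metrics, I would argue that $d_{\mathbb{G},R_i}(G_1,G_2)=0$ forces the penalty term $(n-m)C_i^p$ to vanish, hence $m=n$, and then the vanishing of each $d_{\mcx}(v_i,w_{\pi(i)})^p$ and each $d_{\mcy}(e_{ii'},f_{\pi(i)\pi(i')})^p$ yields an attribute-preserving permutation that identifies $G_1$ with $G_2$ in $\mathbb{G}$.

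The bulk of the work is the triangle inequality. Given three graphs $G_1,G_2,G_3$ of sizes $n_1,n_2,n_3$, my plan is to use Proposition~\ref{prop: proof of GOSPA1 and GOSPA2 filling up to same size does not change} to rewrite each pairwise distance as the minimum of a single matching cost between enlarged graphs of a common size. After padding all three graphs up to the overall maximum size $N=\max(n_1,n_2,n_3)$ with the appropriate auxiliary vertices (and, for GOSPA1, auxiliary edges), I would take optimal permutations $\pi_{12}^{*},\pi_{23}^{*}\in S_N$ realising the two right-hand side distances (filled in by aux-to-aux matches whenever the pairwise maximum is strictly below~$N$) and use the composition $\pi_{13}:=\pi_{23}^{*}\circ\pi_{12}^{*}$ as a feasible candidate in the minimisation defining $d_{\mathbb{G},R_i}(G_1,G_3)$. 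Termwise triangle inequalities for $d_{\mcx}$ on $\mcx\cup\{x_*\}$ and for $d_{\mcy}$ on $\mcy\cup\{y_*\}$, followed by Minkowski's inequality on the weighted $\ell^p$-sum (which uses $p\geq 1$), then give the desired bound whenever all three graphs share the common size $N$.

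The hardest part will be the case where $N$ strictly exceeds $\max(n_1,n_3)$, i.e., where $G_2$ is the strictly largest graph: the left-hand side $d_{\mathbb{G},R_i}(G_1,G_3)$ is normalised by the smaller factor $\max(n_1,n_3)^{1/p}$, while the composed upper bound naturally sits at the $N^{1/p}$ normalisation, and similarly the edge-prefactors $1/(n-1)$ on the two sides differ. This is precisely where the standing constraints $C_1^p\geq C_{\mcx}^p+\tfrac12 C_{\mcy}^p$ and $C_2^p\geq C_{\mcx}^p+C_{\mcy}^p$ earn their keep: they ensure that the vertex penalties $(n_2-n_1)C_i^p$ and $(n_2-n_3)C_i^p$ appearing on the right dominate, after normalisation, the smaller left-hand penalty $(n_3-n_1)C_i^p$ plus whatever residual vertex- and edge-mismatch a composed chain routed through $x_*$ accrues; an arithmetic check of the form $(n_3-n_1)/n_3 \leq (n_2-n_1)/n_2 + (n_2-n_3)/n_2$ makes the penalty book-keeping close. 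For GOSPA2 with $p>1$, the additional hypothesis $d_{\mcy}(y_1,y_2)\leq\max\{d_{\mcy}(y_1,y_0),d_{\mcy}(y_0,y_2)\}$ is invoked exactly at the step where chaining an edge through the intermediate $y_0$ and then raising to the $p$-th power would, under the naive triangle inequality alone, overshoot the right-hand-side edge budget; the ultrametric-type bound turns this into a clean additive estimate that Minkowski can absorb.
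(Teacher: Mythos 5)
Your overall architecture matches the paper's: identity and symmetry as you describe, triangle inequality via the equal-size case (composed permutations plus Minkowski) and then a reduction of the unequal-size case by padding, with the case where the intermediate graph is strictly largest singled out as the hard one. You have also correctly located where the penalty constraints and the extra condition on $d_{\mcy}$ for $p>1$ enter. However, there is a genuine gap in the hard case as you have set it up. Padding all three graphs to the common size $N=n_2$ and composing optimal permutations gives you, via the equal-size triangle inequality, a bound on $d_{\mathbb{G},R_i}(G_1^{\mathrm{pad}},G_3^{\mathrm{pad}})$ at size $N$; to conclude you still need $d_{\mathbb{G},R_i}(G_1,G_3)\leq d_{\mathbb{G},R_i}(G_1^{\mathrm{pad}},G_3^{\mathrm{pad}})$. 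If $G_1$ and $G_3$ are padded with the \emph{same} auxiliary element $x_*$, this inequality is simply false: the extra aux-to-aux matches cost zero while the normalisation grows from $\max(n_1,n_3)^{1/p}$ to $N^{1/p}$, so the padded distance strictly drops. Concretely, with $d_{\mcy}\equiv 0$, $n_1=n_3=1$, $n_2=2$ and $d_{\mcx}(v_1,w_1)=C_{\mcx}=C_i$, one gets $d_{\mathbb{G},R_i}(G_1,G_3)=C_{\mcx}$ but $d_{\mathbb{G},R_i}(G_1^{\mathrm{pad}},G_3^{\mathrm{pad}})=2^{-1/p}C_{\mcx}$. Your arithmetic check $(n_3-n_1)/n_3\leq(n_2-n_1)/n_2+(n_2-n_3)/n_2$ does not repair this, because the loss occurs in the non-penalty part of the cost, not in the penalty bookkeeping.

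The paper's fix, which your proposal is missing, is to pad $G_1$ and $G_3$ with \emph{two distinct} auxiliary elements $x_*$ and $x'_*$ (and $y_*,y'_*$ for GOSPA1) whose mutual distances are tuned so that aux-to-aux matches still cost the full penalty; the needed monotonicity under this asymmetric padding is then the content of Lemmas~\ref{le:  proof of GOSPA1 extending graphs with different elements increases size} and~\ref{le: GOSPA2 extending with different elements}. Even then, for GOSPA2 the fill-up of the smaller graph cannot be made exactly metric-preserving from the already-extended space: an additive correction $C_{\mcy}^p(l-m)/l$ appears (Equation~\eqref{eq: Equality filling up smaller to larger size with constant in OSPACE}) and must be carried through both sides of the chain, and the proof that the padded distance dominates up to this correction requires a delicate swapping argument for the optimal permutation in which the ultrametric-type hypothesis on $d_{\mcy}$ is used — this is a substantial computation, not a step that Minkowski absorbs. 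Without the two-point padding device and the explicit tracking of this correction term, the plan as written does not close.
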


The following is a direct consequence of the definitions.
\begin{proposition} \label{prop:gospa_vs_ospa}
  Suppose that $p \geq 1$ and $d_{\mcy} \equiv 0$ with $C_{\mcy}=0$. Then we have, for $([m],v,e),([n],w,f)\in\mathbb{G}$,
  \begin{equation*}
      d_{\mathbb{G},R_i}(([m],v,e),([n],w,f)) = \varrho(\xi,\eta), \quad \text{$i=1,2$},
  \end{equation*}
  where $\varrho$ denotes the OSPA metric between finite counting measures with penalty $C_i \geq C_{\mcx}$ and $\xi = \sum_{i=1}^m \delta_{v_i}$, $\eta = \sum_{j=1}^n \delta_{w_j}$ are the counting measures corresponding to the node sets $([m],v),([n],w)$, respectively. 
\end{proposition}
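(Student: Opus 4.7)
The plan is to verify the statement by direct substitution into Definition~\ref{def: graph-OSPA} and observe that every term encoding edge information collapses to zero under the hypothesis $d_{\mcy}\equiv 0$, $C_{\mcy}=0$. This is essentially a consistency check showing that the GOSPA metrics specialize correctly when edge information is ignored.

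First, I would verify that the hypotheses of Definition~\ref{def: graph-OSPA} remain consistent with the assumptions of the proposition. The diameter constraint $\mathrm{diam}(\mcy)\leq C_{\mcy}=0$ is enforced by $d_{\mcy}\equiv 0$; the penalty conditions $C_1^p\geq C_{\mcx}^p+\tfrac12 C_{\mcy}^p$ and $C_2^p\geq C_{\mcx}^p+C_{\mcy}^p$ both reduce to $C_i\geq C_{\mcx}$, which is exactly the assumption on the penalty in the statement; and the additional triangle-type condition on $d_{\mcy}$ required for GOSPA2 when $p>1$ is trivially fulfilled.

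Next, for GOSPA1 the inner block $\sum_{i'\in[m]}d_{\mcy}(e_{ii'},f_{\pi(i)\pi(i')})^p+(n-m)C_{\mcy}^p$ vanishes term by term, and for GOSPA2 both edge-sums (over $[m]^2$ and over $[n]^2\setminus[m]^2$) vanish as well. In either case what remains is
\begin{equation*}
  d_{\mathbb{G},R_i}(([m],v,e),([n],w,f))=\frac{1}{n^{1/p}}\min_{\pi\in S_n}\biggl[(n-m)C_i^p+\sum_{i\in[m]}d_{\mcx}(v_i,w_{\pi(i)})^p\biggr]^{1/p},
\end{equation*}
which is, by definition, the OSPA distance $\varrho(\xi,\eta)$ with penalty $C_i$ between the finite counting measures $\xi=\sum_{i=1}^m\delta_{v_i}$ and $\eta=\sum_{j=1}^n\delta_{w_j}$.

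Finally I would briefly check that the degenerate cases singled out in Definition~\ref{def: graph-OSPA} (the convention $0/0:=0$, the explicit formula for $m\leq n=1$, and the value $0$ when $m=n=0$) match the corresponding conventions used in the definition of $\varrho$. There is no substantive obstacle here: once the edge contributions are shown to vanish, the identification with the OSPA metric is immediate, so the only care needed is in bookkeeping around the boundary cases.
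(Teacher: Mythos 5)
Your proposal is correct and matches the paper's treatment: the paper states that this proposition ``is a direct consequence of the definitions'' and gives no further proof, and your direct substitution showing that all edge terms vanish (together with the checks on the penalty conditions and boundary cases) is exactly the verification that is being left implicit.
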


\subsection{Bounds and interrelations}\label{ssec: further metric results}

The following bounds are simple consequences of the definitions. We list them here for easy reference.

\begin{proposition}[Bounds]\label{prop: bound of graph metrics}
Let $(\mathcal{X}, d_{\mcx})$ and $(\mcy, d_{\mcy})$ be (pseudo-)metric spaces such that $\mathrm{diam}(\mcy) \leq C_{\mcy}$. Choose $p\geq 1$.
  \begin{itemize}
      \item[a)] Assume further that $\mathrm{diam}(\mcx) \leq C_{\mcx}$ and consider $C_1^p\geq C_{\mcx}^p + \frac12 C_{\mcy}^p$ and $C_2^p\geq C_{\mcx}^p + C_{\mcy}^p$. Then, for $i= 1,2$ and any $([m],v,e),([n],w,f)\in \mathbb{G}$
      $$d_{\mathbb{G},R_i}([m],v,e),([n],w,f)\leq C_i.$$
      \item[b)] For any $C>0$ and any $([m],v,e),([n],w,f)\in \mathbb{G}$
      $$d_{\mathbb{G},A}([m],v,e),([n],w,f)\leq \bigl((n+m)C^p + (E +F) C_{\mcy}^p\bigr)^{1/p},$$
      where $E$ and $F$ denote the number of edges in $([m],v,e)$ and $([n],w,f)$, respectively.
  \end{itemize}
\end{proposition}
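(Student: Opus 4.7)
Both bounds fit the same template: exhibit a particular feasible object in the defining minimum and read off an upper bound by comparing each term against the diameter hypotheses.

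For part (b), the plan is to insert the trivial choice $I = \emptyset$ (and any $\pi \in S_n$) into Definition~\ref{def:gttmetric}; this collapses the objective to
$$
(m+n) C^p + \frac{1}{2} \sum_{(i,i') \in [m]^2} d_{\mcy}(e_{ii'}, y_0)^p + \frac{1}{2} \sum_{(j,j') \in [n]^2} d_{\mcy}(y_0, f_{jj'})^p.
$$
The diagonal terms vanish because $e_{ii} = y_0 = f_{jj}$, and further off-diagonal terms vanish wherever the attribute itself equals $y_0$. The remaining $2E$ (respectively $2F$) ordered pairs each contribute at most $C_{\mcy}^p$ by the diameter hypothesis on $\mcy$; together with the prefactor $\tfrac12$ this gives $E C_{\mcy}^p$ and $F C_{\mcy}^p$, and taking the $p$-th root yields the claim.

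For part (a), the plan is to invoke Proposition~\ref{prop: proof of GOSPA1 and GOSPA2 filling up to same size does not change}, which recasts each $d_{\mathbb{G},R_i}$ as an optimal-matching cost on two graphs enlarged to common size $n$, so that any permutation $\pi \in S_n$ already provides an upper bound. For arbitrary $\pi$ I would bound the vertex sum by $n$ times the $p$-th power of the diameter of the extended vertex pseudometric, and the edge sum by $n(n-1) C_{\mcy}^p$, using that only the $n(n-1)$ off-diagonal edge pairs can contribute (the diagonal is killed by $e_{ii} = y_0 = f_{jj}$). For $R_1$ the extension sets $d_{\mcx}(\cdot, x_*)^p = C_1^p - \tfrac12 C_{\mcy}^p$ and keeps $\mathrm{diam}(\mcy^*) = C_{\mcy}$, so the bracket is at most $n(C_1^p - \tfrac12 C_{\mcy}^p) + \tfrac{1}{2(n-1)} \cdot n(n-1) C_{\mcy}^p = n C_1^p$, yielding $d_{\mathbb{G}, R_1} \leq C_1$ after the $n^{-1/p}$ normalization.

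The $R_2$ case is where I expect the main obstacle to lie. Direct mimicry of the $R_1$ argument (now with $d_{\mcx}(\cdot, x_*)^p = C_2^p$ but no extension of $d_{\mcy}$) gives bracket $\leq n C_2^p + \tfrac{n}{2} C_{\mcy}^p$, which is too coarse. The plan is to be more careful by splitting the edge sum into its $[m]^2$ part (of size $m(m-1)$) and its $[n]^2 \setminus [m]^2$ part (of size $(n-m)(n+m-1)$), separately bounding the vertex part as $m C_{\mcx}^p + (n-m) C_2^p$, and then absorbing the surplus edge cost attached to pairs with an index in $[n] \setminus [m]$ into the unmatched-vertex penalty $(n-m) C_2^p$, using the slack $C_2^p - C_{\mcx}^p \geq C_{\mcy}^p$ granted by the hypothesis on $C_2$. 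If the absorption closes the estimate to $n C_2^p$, dividing by $n$ and taking the $p$-th root yields $C_2$.
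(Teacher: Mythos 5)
Your treatment of part (b) and of part (a) for $i=1$ is correct and is exactly the argument the paper has in mind (the paper gives no written proof, presenting these bounds as simple consequences of the definitions): in (b) the choice $I=\emptyset$ leaves only the penalty $(m+n)C^p$ and the two fill-up edge sums, whose $2E$ resp.\ $2F$ nonzero ordered terms are each at most $C_{\mcy}^p$; in (a) for $i=1$ the inner edge bracket for each $i\in[m]$ is at most $(m-1)C_{\mcy}^p+(n-m)C_{\mcy}^p=(n-1)C_{\mcy}^p$, so the whole bracket is bounded by $(n-m)C_1^p+m\bigl(C_{\mcx}^p+\tfrac12 C_{\mcy}^p\bigr)\le nC_1^p$, whether you argue on the original or the filled-up formulation.

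The obstacle you flag in the $R_2$ case is not a defect of your bookkeeping: the absorption you hope for cannot close, and in fact the stated bound is false in general. Counting carefully, the off-diagonal pairs in $[m]^2$ and in $[n]^2\setminus[m]^2$ number $m(m-1)$ and $(n-m)(n+m-1)$ respectively, which sum to $n(n-1)$, so the sharp worst-case estimate of the bracket is
$$(n-m)\,C_2^p+m\,C_{\mcx}^p+\tfrac{n}{2}\,C_{\mcy}^p,$$
and this is $\le nC_2^p$ if and only if $\tfrac{n}{2}C_{\mcy}^p\le m\,(C_2^p-C_{\mcx}^p)$. With the minimal admissible $C_2^p=C_{\mcx}^p+C_{\mcy}^p$ this requires $n\le 2m$, and no strengthening of the hypothesis on $C_2$ rescues the case $m=0$. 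Concretely: take $p=1$, $m=0$, $n=2$, $\mcx$ a single point (so $C_{\mcx}=0$), $\mcy=\{y_0,y_1\}$ with $d_{\mcy}(y_0,y_1)=C_{\mcy}=1$, $C_2=C_{\mcx}+C_{\mcy}=1$, and let $([2],w,f)$ be the two-vertex graph with one edge of attribute $y_1$. Every $\pi\in S_2$ gives the same value, namely $d_{\mathbb{G},R_2}=\tfrac12\bigl(2C_2+\tfrac12\cdot 2C_{\mcy}\bigr)=C_2+\tfrac12C_{\mcy}=1.5>C_2$. The correct uniform bound is $d_{\mathbb{G},R_2}\le\bigl(C_2^p+\tfrac12 C_{\mcy}^p\bigr)^{1/p}$, and the claimed bound $C_2$ does hold under the additional assumption $n\le 2m$ (in particular for $m=n$). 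So for $i=2$ the gap lies in the proposition rather than in your argument; your splitting of the edge sum is the right tool, but it should be used to prove the weaker bound or the statement should carry the extra hypothesis.
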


The following comparisons of the metrics are straightforwardly obtained from Propositions~\ref{prop: GTT as optimal matching (filled to n+m)} and~\ref{prop: proof of GOSPA1 and GOSPA2 filling up to same size does not change}.

\begin{proposition}[Relation of metrics]
Let $(\mathcal{X}, d_{\mcx})$ and $(\mcy, d_{\mcy})$ be (pseudo-)metric spaces satisfying $\mathrm{diam}(\mcx) \leq C_{\mcx}$ and $\mathrm{diam}(\mcy) \leq C_{\mcy}$. Choose $p\geq 1$.
\begin{itemize}
    \item[a)] For any $([n],v,e),([n],w,f)\in \mathbb{G}$ $$d_{\mathbb{G},R_1}([n],v,e),([n],w,f)= d_{\mathbb{G},R_2}([n],v,e),([n],w,f).$$
    \item[b)] For $C_1^p \leq C_2^p - \frac{1}{2}C_{\mcy}^p$ and any $([m],v,e),([n],w,f)\in \mathbb{G}$ $$d_{\mathbb{G},R_1}([m],v,e),([n],w,f)\leq d_{\mathbb{G},R_2}([m],v,e),([n],w,f).$$ 
    \item[c)] For $C_2^p \leq C_1^p - \frac{1}{2}C_{\mcy}^p$ and any $([m],v,e),([n],w,f)\in \mathbb{G}$ $$d_{\mathbb{G},R_1}([m],v,e),([n],w,f)\geq d_{\mathbb{G},R_2}([m],v,e),([n],w,f).$$ 
    \item[d)] For $C^p\geq C_i^p$, $i=1,2$, and any $([m],v,e),([n],w,f)\in \mathbb{G}$
    $$d_{\mathbb{G},A}([m],v,e),([n],w,f)\geq d_{\mathbb{G},R_i}([m],v,e),([n],w,f).$$
\end{itemize}
\end{proposition}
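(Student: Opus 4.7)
The four parts can all be proved by direct manipulation of the defining formulas, with Propositions~\ref{prop: GTT as optimal matching (filled to n+m)} and~\ref{prop: proof of GOSPA1 and GOSPA2 filling up to same size does not change} supplying clean equivalent matching expressions when needed. Part~(a) is immediate from Definition~\ref{def: graph-OSPA}: with $m = n$ the term $(n-m)C_i^p$ vanishes, the index set $[n]^2 \setminus [m]^2$ is empty, and the GOSPA1 inner double-sum collapses to the GOSPA2 edge sum, so both problems minimize the same objective.

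For parts~(b) and~(c) my plan is a pointwise comparison of the GOSPA1 and GOSPA2 objectives over $\pi \in S_n$. Writing out the definitions yields
\begin{equation*}
  \mathrm{obj}_{R_2}(\pi) - \mathrm{obj}_{R_1}(\pi) = (n-m)(C_2^p - C_1^p) + \frac{1}{2(n-1)} \Bigl[\sum_{(i,i')\in [n]^2 \setminus [m]^2} d_{\mcy}(y_0, f_{\pi(i)\pi(i')})^p - m(n-m) C_{\mcy}^p\Bigr].
\end{equation*}
Exactly $(n-m)(n+m-1)$ pairs in $[n]^2 \setminus [m]^2$ are non-diagonal and each term lies in $[0, C_{\mcy}^p]$, so the bracketed expression is sandwiched between $-m(n-m) C_{\mcy}^p$ and $(n-m)(n-1) C_{\mcy}^p$. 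Under the hypothesis of~(b) (respectively of~(c)) the whole difference is non-negative (respectively non-positive) for every $\pi$, which transfers to the minima.

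Part~(d) is the most involved. Given any feasible $(I, \pi)$ in the GTT problem with $|I| = k$, I would construct $\pi' \in S_n$ by setting $\pi'|_I = \pi|_I$, extending $[m]\setminus I$ injectively into $[n] \setminus \pi(I)$ and filling the remaining slots arbitrarily. The aim is to show $n \cdot \mathrm{obj}_A(I, \pi) \geq \mathrm{obj}_{R_i}(\pi')$, from which $d_{\mathbb{G},A}^p \geq d_{\mathbb{G},R_i}^p$ follows by taking minima. The vertex-part comparison uses $C^p \geq C_i^p$ together with $C_{\mcx}^p \leq C_i^p$, leaving a GTT surplus of order $(m+n-2k) C^p$ over the GOSPA vertex cost. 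For the edge part I apply
\begin{equation*}
  d_{\mcy}(e_{ii'}, f_{\pi'(i)\pi'(i')})^p \leq d_{\mcy}(e_{ii'}, y_0)^p + d_{\mcy}(y_0, f_{\pi'(i)\pi'(i')})^p,
\end{equation*}
which holds by the triangle inequality for $p = 1$ and by the extra assumption on $d_{\mcy}$ in Definition~\ref{def: graph-OSPA}b) for $p > 1$; combined with the change of variables $j = \pi'(i), j' = \pi'(i')$ this identifies the GOSPA edge contributions from indices outside $I$ with summands appearing in the GTT penalty $\tfrac12 \sum_{(j,j')\in[n]^2 \setminus \pi(I)^2} d_{\mcy}(y_0, f_{jj'})^p$. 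For GOSPA2 the comparison closes immediately, and the degenerate case $n = 1$ is checked by hand.

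The main obstacle lies in part~(d) for GOSPA1: unlike GOSPA2, matching a real vertex to an auxiliary one in the GOSPA1 formulation carries both the vertex penalty $C_1^p$ and the fixed auxiliary-edge contribution $\tfrac{m(n-m)}{2(n-1)} C_{\mcy}^p$, so the GTT surplus has to be carefully apportioned between vertex and edge budgets using both $C^p \geq C_1^p$ and $C_1^p \geq \tfrac12 C_{\mcy}^p$, reducing eventually to a short polynomial inequality in $k, m, n$ that holds throughout the admissible range $0 \leq k \leq m \leq n$.
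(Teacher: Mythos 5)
Your proposal is correct in substance and actually supplies details the paper omits: the paper gives no written proof of this proposition, asserting only that it follows ``straightforwardly'' from Propositions~\ref{prop: GTT as optimal matching (filled to n+m)} and~\ref{prop: proof of GOSPA1 and GOSPA2 filling up to same size does not change}, i.e.\ by comparing the common post-fill-up objective $\frac{1}{n^{1/p}}\min_\pi\bigl[\sum_i d_{\mcx}(v_i,w_{\pi(i)})^p+\frac{1}{2(n-1)}\sum_{(i,i')}d_{\mcy}(e_{ii'},f_{\pi(i)\pi(i')})^p\bigr]^{1/p}$, in which the metrics differ only through the auxiliary elements. Your parts (a)--(c) instead work directly from Definition~\ref{def: graph-OSPA}; the count of $(n-m)(n+m-1)$ non-diagonal pairs in $[n]^2\setminus[m]^2$ and the resulting sandwich $-m(n-m)C_{\mcy}^p\leq[\cdots]\leq(n-m)(n-1)C_{\mcy}^p$, combined with $m\leq n-1$ whenever $m<n$, do give the sign of $\mathrm{obj}_{R_2}(\pi)-\mathrm{obj}_{R_1}(\pi)$ for every $\pi$ and hence for the minima. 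This is a legitimate and arguably more transparent alternative to the fill-up route; what the fill-up route buys is that (a)--(c) become comparisons of the extended attribute metrics alone.

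Two points on (d), one of which is a genuine (though repairable) gap. First, for GOSPA1 with $p>1$ the splitting $d_{\mcy}(e_{ii'},f_{\pi'(i)\pi'(i')})^p\leq d_{\mcy}(e_{ii'},y_0)^p+d_{\mcy}(y_0,f_{\pi'(i)\pi'(i')})^p$ is not available: the condition $d_{\mcy}(y_1,y_2)\leq\max\{d_{\mcy}(y_1,y_0),d_{\mcy}(y_0,y_2)\}$ is imposed only in Definition~\ref{def: graph-OSPA}b), i.e.\ for GOSPA2. For GOSPA1 you must instead bound each of the $(m-k)(m+k-1)$ non-diagonal cross terms in $[m]^2\setminus I^2$ crudely by $C_{\mcy}^p$ and absorb $\frac{(m-k)(m+k-1)}{2n(n-1)}C_{\mcy}^p$ into the surplus $(m-k)C^p\geq(m-k)\tfrac12C_{\mcy}^p$, which works since $m+k-1\leq n(n-1)$ throughout $0\leq k\leq m\leq n$ (the case $k=m$ having no cross terms). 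Second, the factor $n$ in your target inequality $n\,\mathrm{obj}_A(I,\pi)\geq\mathrm{obj}_{R_1}(\pi')$ is essential, not cosmetic: the fixed penalty $(n-m)C_1^p+\frac{m(n-m)}{2(n-1)}C_{\mcy}^p$ cannot be covered by $(n-m)C^p$ alone when $C=C_1$, and only the normalized bound $\frac{1}{n}\bigl(C_1^p+\tfrac12C_{\mcy}^p\bigr)\leq\frac{2}{n}C_1^p\leq C^p$ for $n\geq2$ closes the gap; equality is attained for $m=1$, $n=2$, $d_{\mcx}\equiv0$, $e\equiv f\equiv y_0$ and $C^p=C_1^p=\tfrac12C_{\mcy}^p$, so there is no room to spare. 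With these two points made explicit, the ``short polynomial inequality'' you defer to does hold and the argument is complete.
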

\begin{remark}[Differences between GTT and GOSPA metrics]
Besides the scaling in the GOSPA metrics, we obtain further differences between the GTT metric and the GOSPA metrics. For instance, unlike the GTT metric, the GOSPA metrics require that the number of (non-auxiliary) vertex matchings is maximal.
Furthermore, the GTT metric can be defined for any underlying vertex and edge metrics and is in general unbounded. In contrast to this the GOSPA metrics require bounded underlying metrics which yields bounded graph metrics (see Proposition~\ref{prop: bound of graph metrics}). 
\end{remark}

\subsection{Convergence of random graphs}\label{ssec: convergence considerations}

We show that the (relative) graph metrics introduced metrize weak convergence of graphs in a sense that we are going to make precise now. 

Assume that $(\mcx, d_{\mcx})$, $(\mcy, d_{\mcy})$ are compact metric spaces. We first represent $([m], v, e)$ equivalently as a pair of finite counting measures $(\xi,\sigma)$. The vertex information of the graph is expressed by the measure $\xi$ living on an extension $\mcx'$ of $\mcx$ that serves for identifying vertices uniquely. The edge information is expressed by the measure $\sigma$ living on an extension $\mcy'$ of $\mcy$ that serves for keeping the edge attributes continuously associated with their pairs of vertices. We equip $\mcx'$ and $\mcy'$ with appropriate metrics such that they are compact metric spaces. Denote by $\mfn(\mcx')$ and $\mfn(\mcy')$ the spaces of finite counting measures on $\mcx'$ and $\mcy'$ respectively. The constructions of $\xi \in \mfn(\mcx')$ and $\sigma \in \mfn(\mcy')$ are as follows.

Consider a graph $([m],v,e)\in\mathbb{G}$.
If $v$ is injective, i.e.\ if the vertex attributes are unique, set $(\mcx',d_{\mcx'}) = (\mcx,d_{\mcx})$ and $v' = v$. This is the case, e.g., if the vertices are obtained as realization of a simple point process in $\mcx \subset \R^d$ (altered on a null set to be strictly without multipoints).
Otherwise, if $v$ is not injective, use $\mcx' = \mcx \times [0,1]$ and set $d_{\mcx'}((x_1,u_1),(x_2,u_2)) = d_{\mcx}(x_1,x_2) + \abs{u_1-u_2}$. Let $v' \colon [m] \to \mcx'$, $v'(i) = v'_i = (v_i,1/i)$ be the new map of unique vertex attributes. Define then $\xi = \sum_{i=1}^{m} \delta_{v'_i} \in \mfn(\mcx')$.

Next, we write ${\mcx'}^{\atwo} = \{\{x'_1,x'_2 \} \mvert x'_1, x'_2 \in\mcx', x'_1 \neq x'_2 \}$, on which we define a metric by $d_{{\mcx'}^{\atwo}}\big(\{x'_1, x'_2\}, \{\tx'_1, \tx'_2\} \big) = \min\{d_{\mcx'}(x'_1,\tx'_1) + d_{\mcx'}(x'_2,\tx'_2), d_{\mcx'}(x'_1,\tx'_2) + d_{\mcx'}(x'_2,\tx'_1) \}$.

If $e_{ii'} \neq y_0$ allows to recover $\{v'_i,v'_{i'}\}$ for all $i,i' \in [m]^2$ continuously, i.e.\ if there exists a continuous function $h:\mcy \setminus \{y_0\} \to {\mcx'}^{\atwo}$ that maps an edge attribute~$e_{ii'}\neq y_0$ to its unique vertex set~$\{v'_i,v'_{i'}\}$, set $(\mcy',d_{\mcy'}) = (\mcy,d_{\mcy})$ and $e' = e$. Otherwise we equip all $e_{ii'} \neq y_0$ with the additional attribute $\{v'_i,v'_{i'}\}$ (or some simpler information that uniquely identifies the two vertices, such as $\{i,i'\}$ if $v'_i, v'_{i'}$ were constructed as in the previous paragraph).
For this, use $\mcy' =  \bigl\{(y, \{x_1, x_2\}) \bigm| y \in \mcy,\; x_1,x_2 \in \mcx',\; x_1 \neq x_2 \bigr\}$. While we include the elements $(y_0, \{x_1, x_2\})$, $x_1,x_2 \in \mcx',\; x_1 \neq x_2$ for formal simplicity, we think of them as a single equivalence class, denoted by $y'_0$. Set then
\begin{align*}
  d_{\mcy'}\bigl( (y, \{x'_1, x'_2\}), (\ty, \{\tx'_1, \tx'_2\}) \bigr) &= d_{\mcy}(y,\ty) + d_{{\mcx'}^{\atwo}}\big(\{x'_1, x'_2\},\, \{\tx'_1, \tx'_2\} \big) \1\bigl\{y,\ty \neq y_0 \bigr\},
\end{align*}
which is consistent with the elements that make up $y'_0$. 
Let $e'\colon [m]^2 \to \mcy'$,
\begin{align*}
  e'(i,i') = e'_{ii'} = \begin{cases}
    y_0 &\text{if $e(i,i') = y_0$}; \\
    \bigl( e_{ii'}, \{v'_i,v'_{i'}\} \bigr) &\text{otherwise}
  \end{cases}
\end{align*}
be the new map of edge attributes that allows to recover $\{v'_i,v'_{i'}\}$ continuously from images $e_{ii'} \neq y_0$. Define $\sigma = \sum_{\substack{i,i'=1\\ i<i'}}^{m} \delta_{e'_{ii'}} \in \mfn(\mcy')$, which concludes the construction.


Write $\mfn'_{\mathbb{G}}$ for the space of $(\xi,\sigma) \in \mfn(\mcx')\times\mfn(\mcy')$ that can be obtained from elements of $\mathbb{G}$. By the above construction, the map $\mathbb{G} \ni ([m],v,e) \mapsto (\xi, \sigma) \in \mfn'_{\mathbb{G}}$ is a bijection.

We equip $\mfn(\mcx')$ and $\mfn(\mcy')$ with their weak topologies and refer to the topology $\mct'_w$ on $\mfn'_{\mathbb{G}}$ that is induced by the product topology on $\mfn(\mcx')\times\mfn(\mcy')$ as the \emph{weak topology} on $\mfn'_{\mathbb{G}}$. This slight abuse of terminology is justified additionally by the fact $\mfn(\mcx')\times\mfn(\mcy')$ is homeomorphic to $\mfn(\mcx'\times \mcy')$ equipped with the weak topology.

The following theorem states that $(\mfn'_{\mathbb{G}}, \mct'_w)$ is homeomorphic to $(\mathbb{G}, d_{\G})$, where we write $d_{\G}$ as a placeholder for any of the metrics $d_{\G,A}, d_{\G,R_1}$ or $d_{\G,R_2}$, all of which (invisibly) for any order $p \geq 1$. 
\begin{theorem}[Metrization of weak convergence] \label{thm: Metrization of weak convergence}
Let $(\mathcal{X}, d_{\mcx})$ and $(\mcy, d_{\mcy})$ be compact metric spaces. Consider  $([m_n],v_n,e_n),([m],v,e)\in \mathbb{G}$ with representations $(\xi_n,\sigma_n)$, $(\xi,\sigma)\in\mfn'_{\mathbb{G}}$, respectively. Then the following statements are equivalent
\begin{enumerate}
    \item[(i)] $(\xi_n,\sigma_n) \to (\xi,\sigma)$ weakly in $\mfn'_{\mathbb{G}}$;
    \item[(ii)] $d_{\mathbb{G}}(([m_n],v_n,e_n),([m],v,e))\to 0 $.
\end{enumerate}
Furthermore, interpreting $d_{\mathbb{G}}$ as metric on $\mfn'_{\mathbb{G}}$, the space $(\mfn'_{\mathbb{G}},d_{\mathbb{G}})$ is a complete separable metric space (c.s.m.s.).
\end{theorem}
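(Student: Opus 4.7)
The plan is to reduce the equivalence to classical facts about weak convergence of finite counting measures on compact metric spaces: on $\mfn(\mcx')$ (resp.\ $\mfn(\mcy')$), which are themselves Polish because $\mcx'$ and $\mcy'$ are compact, weak convergence is metrized by, e.g., the OSPA or TT metric. A preliminary observation is that weak convergence of finite counting measures on a compact space forces the integer-valued total mass to converge, hence to be eventually constant; so $m_n = m$ for all sufficiently large $n$ in either direction. Once cardinalities agree, the three candidate metrics $d_{\mathbb{G},A}, d_{\mathbb{G},R_1}, d_{\mathbb{G},R_2}$ differ only by constants depending on $m$, so I may treat them jointly and write $d_{\mathbb{G}}$.

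For (ii) $\Rightarrow$ (i), I would use the permutation $\pi_n$ achieving the minimum in the definition of $d_{\mathbb{G}}(G_n,G)$. This $\pi_n$ provides a matching of vertex attributes whose total cost bounds the vertex portion of $d_{\mathbb{G}}$; by Proposition~\ref{prop:gtt_vs_tt}/\ref{prop:gospa_vs_ospa} (applied with $d_{\mcy}\equiv 0$), this controls the TT/OSPA distance between the vertex measures on $\mcx$, and lifting to $\mcx'$ contributes only the bounded $1/i$-coordinates which become irrelevant once cardinalities agree. Hence $\xi_n\to\xi$ weakly. The same $\pi_n$ matches edge atoms in $\mcy'$: the $y$-component cost is controlled by the edge portion of $d_{\mathbb{G}}$, and the vertex-pair component, measured in $d_{{\mcx'}^{\atwo}}$, is controlled by the already established vertex matching through the definition of $d_{\mcy'}$. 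This gives $\sigma_n\to\sigma$ weakly.

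For (i) $\Rightarrow$ (ii), weak convergence of $\xi_n$ and $\sigma_n$ together with eventual constancy of cardinalities yields relabelings $\pi_n\in S_m$ such that $v^{\prime}_{n,i}\to v'_{\pi_n(i)}$ in $\mcx'$, and correspondingly the atoms of $\sigma_n$ converge to the atoms of $\sigma$ in $\mcy'$. The crucial point is that by construction every non-trivial edge atom in $\mcy'$ carries the unordered vertex-pair label $\{v'_i,v'_{i'}\}$, so convergence of the edge atoms is forced to be compatible with the permutation $\pi_n$ derived from vertex convergence. Inserting this common $\pi_n$ into the defining expression of $d_{\mathbb{G}}$ and splitting the sum into vertex and edge contributions, each summand tends to zero by the respective convergence, so $d_{\mathbb{G}}(G_n,G)\to 0$.

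For the c.s.m.s.\ claim, separability follows by taking graphs whose cardinality is in $\N$, whose vertex attributes lie in a countable dense subset of $\mcx$, and whose edge attributes lie in a countable dense subset of $\mcy$; these form a countable $d_{\mathbb{G}}$-dense subset. For completeness, the product $\mfn(\mcx')\times\mfn(\mcy')$ is Polish under the weak topology, so it suffices to show that $\mfn'_{\mathbb{G}}$ is closed in this topology, as then $(\mfn'_{\mathbb{G}},d_{\mathbb{G}})$ is topologically Polish, and the metric $d_{\mathbb{G}}$ itself is complete because a Cauchy sequence in $d_{\mathbb{G}}$ is, by (ii) $\Rightarrow$ (i), Cauchy in the weak topology and its limit lies in $\mfn'_{\mathbb{G}}$ by closedness. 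I expect the main technical obstacle to be precisely this closedness: one must verify that the structural constraints defining $\mfn'_{\mathbb{G}}$ (distinctness of vertex atoms, preserved by the $1/i$-trick together with eventual constancy of $m_n$; edge atoms of the form $(y,\{x'_1,x'_2\})$ with $y\neq y_0$ and $x'_1,x'_2\in\supp(\xi)$; the simple-graph/symmetry condition) all survive under weak limits, which amounts to careful bookkeeping using the definition of $d_{\mcy'}$ and the fact that no two edge atoms can collapse because their vertex-pair components remain separated by the already-distinct vertex atoms of $\xi$.
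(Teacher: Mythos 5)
Your overall strategy parallels the paper's (represent, show the two convergences are equivalent, then deduce the c.s.m.s.\ property), but the technical routes differ and one step has a real gap.

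On the equivalence: for (ii)$\Rightarrow$(i) you reduce the vertex part to the TT/OSPA metrization results and then propagate the optimal permutation to the edge atoms via the construction of $d_{\mcy'}$; the paper instead tests directly against bounded continuous functions on the compact product $\mcx'\times\mcy'$ and uses uniform continuity (Tychonoff) to bound the resulting supremum. For (i)$\Rightarrow$(ii) you invoke the fact that weak convergence of counting measures with eventually constant cardinality yields relabelings under which atoms converge individually, and use the vertex-pair labels on the edge atoms to force compatibility of the two matchings; the paper avoids this by a subsequence/compactness argument (extract convergent attribute subsubsequences, apply the forward direction, and use uniqueness of weak limits together with the bijection $\mathbb{G}\leftrightarrow\mfn'_{\mathbb{G}}$). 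Both routes are viable; your compatibility observation is exactly the ``crucial ingredient'' the paper also highlights. One point you gloss over: when $v$ is not injective the lifted atoms carry $1/i$-labels, and under a non-identity optimal permutation $\pi$ the label distance $\abs{1/i-1/\pi(i)}$ does \emph{not} vanish; the paper handles this by relabeling so that the optimal permutation is the identity and by restricting the supremum to pairs with equal labeling components. You should make that relabeling explicit rather than asserting the labels ``become irrelevant.''

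The genuine gap is in the completeness argument. You infer that a $d_{\mathbb{G}}$-Cauchy sequence is ``Cauchy in the weak topology'' from the sequential implication (ii)$\Rightarrow$(i). That is a non sequitur: Cauchyness is a metric (uniform) notion, not a topological one, and a homeomorphism between metric spaces need not map Cauchy sequences to Cauchy sequences; the equivalence of convergent sequences gives you nothing about Cauchy sequences for a metric of the weak topology. The repair is the one the paper uses: first show that a $d_{\mathbb{G}}$-Cauchy sequence has eventually constant cardinality $k$ (this needs a small separate argument, since for the GOSPA metrics the cardinality penalty is normalized), so its tail lies in $\mfn'^{\,k}_{\mathbb{G}}$, which is a closed subset of the compact set $\mfn^k(\mcx')\times\mfn^k(\mcy')$ and hence compact; since compactness is topological and the identity is a homeomorphism between the weak topology and $d_{\mathbb{G}}$ on this set, $(\mfn'^{\,k}_{\mathbb{G}},d_{\mathbb{G}})$ is a compact metric space, and a Cauchy sequence with a convergent subsequence converges. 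Your closedness bookkeeping is still needed, but it must be combined with compactness rather than with a transfer of Cauchyness across the homeomorphism.
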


To the best of our knowledge this defines a new type of (random) graph convergence. In the following we compare this new graph convergence with the notion of local weak convergence that was introduced independently by \textcite{aldous2004} and \textcite{benjamini2011} for deterministic graphs and later extended to random graphs by \textcite{vanderhofstad2023}.
Roughly speaking, a graph sequence $(G_n)_{n}$ converges locally weakly to a limit graph $G$ if the $k$-hop neighbourhood of a uniformly chosen root in $G_n$ looks like the $k$-hop neighbourhood of a (possibly random) root in $G$ as $k \to \infty$. Thus, local weak convergence can be used to study the local behaviour and structure of graphs when the number of vertices goes to infinity. If the limit graph is finite, as it is the case for our metrics, local weak convergence is rather peculiar in the sense that it does not even require that the number of vertices eventually matches with that of the limit graph, as the following example shows. 
\begin{example}[edge structure]
Consider the graph sequence $(G_n)_{n\in\N}$  where $G_n = (\xi_n,\sigma_n)$ is the graph consisting of $2n$ vertices (without vertex attributes) and $n$ edges, each connecting two vertices in such a way that every vertex has degree $1$. Further let $G = (\xi,\sigma)$ be the graph containing two vertices and one edge connecting these vertices. Choose one of the two vertices as the deterministic root of $G$ and note that other choices of the root would yield an isomorphic rooted graph. Clearly, $G_n$ converges to $G$ in the local weak sense. However, setting $d_{\mcx} \equiv 0$, we obtain $d_{\mathbb{G}, R_i}((\xi_n,\sigma_n),(\xi,\sigma))\to C_i$ for $i=1,2$ and $d_{\mathbb{G},A}((\xi_n,\sigma_n),(\xi,\sigma))\to \infty$ as $n\to \infty$.
\end{example}

In a second example, we illustrate how convergence in the graph metrics takes the vertex attributes into account quite naturally, whereas locally weak convergence ignores any additional information about the vertices.
\begin{example}[vertex attributes]
Fix $m\in\N$ and $\kappa \geq 0$. For every $n\in\N$, let the vertices of the graph $G_n  = (\xi_n,\sigma_n)$ be given by $m$ equidistant points on a circle of radius $1+\kappa+\varepsilon_n$, where $\varepsilon_n \to 0$ as $n\to\infty $. Furthermore, connect two vertices in $G_n$ if their distance is $\leq r_m$, where $r_m$ is a distance between equidistant points on a fixed circle whose radius is slightly larger than $1+\kappa$. Analogously, let $G = (\xi,\sigma)$ be the graph consisting of $m$ equidistant points on the unit circle that are connected to form a convex polygon. 
Then $G_n$ is isomorphic to $G$ as soon as the distance of two neighbouring points on a circle of radius $1+\kappa+\varepsilon_n$ is smaller or equal to $r_m$. In particular, $G_n$ converges to $G$ in the local weak sense. As in the previous example, the choice of the root in $G$ has no impact on this result. On the other hand, $d_{\mathbb{G}, R_i}((\xi_n,\sigma_n),(\xi,\sigma))\to \kappa$ for $i=1,2$ and $d_{\mathbb{G},A}((\xi_n,\sigma_n),(\xi,\sigma))\to m \kappa$ as $n\to \infty$. In particular, we obtain convergence w.r.t.\ the graph metrics only if $\kappa=0$, i.e.\ only if the vertex locations converge. 
\end{example}

Based on the c.s.m.s.\ $(\mfn'_{\G},d_{\G})$, we can now define convergence in distribution for random graphs, i.e.\ weak convergence on the space $\mathcal{M}_1(\G)$ of probability measures on $\mathbb{G}$ in the usual way. By the fact that on the compact set $\mcx$ the weak and vague topologies agree, we obtain that if $\Sigma_n = \Sigma = \emptyset$, then the weak convergence above is exactly the notion of weak convergence of point processes. 

We may use Wasserstein metrics for metrization. For any real $p \geq 1$, let 
$$\mathcal{M}_{1,p}(\G) = \biggl\{ P \in  \mathcal{M}_1(\G) \biggm\vert \int d_{\G}((\xi,\sigma), \boldsymbol{0})^p \, P(d(\xi,\sigma)) < \infty \biggr\},$$
where $\boldsymbol{0}$ denotes the empty graph.

\begin{definition}
The \emph{Wasserstein metric} $W_{\mathbb{G},p}$ on $\mathcal{M}_{1,p}(\G)$ of order $p$ based on $d_{\mathbb{G}}$ is defined as
$$W_{\G,p}(P,Q) = \Biggl( \inf_{\substack{(\Xi,\Sigma) \sim P \\ (\Eta,\Tau) \sim Q}} \E \bigl[ d_{\mathbb{G}} \bigl( (\Xi,\Sigma), (\Eta,\Tau) \bigr)^p \bigr] \Biggr)^{1/p}, \quad P,Q \in \mathcal{M}_{1,p}(\G).$$
\end{definition}
Then $W_{\mathbb{G},p}(P_n,P) \to 0$ is equivalent to $P_n \to P$ weakly and $\int d_{\G}((\xi,\sigma), \boldsymbol{0})^p \, P_n(d(\xi,\sigma)) \to \int d_{\G}((\xi,\sigma), \boldsymbol{0})^p \, P(d(\xi,\sigma))$ for all three underlying metrics; see \textcite[Theorem~6.9]{villani2009}. By the fact that the GOSPA metrics are bounded, we thus obtain that $W_{\mathbb{G},R_i,p}$, the Wasserstein metrics of order $p$ based on $d_{\G,R_i}$, $i=1,2$, metrize weak convergence.

Note that the underlying metric $d_{\G}$ has a separate order parameter (so far also denoted by $p$), which a priori does not have to agree with the order parameter of the Wasserstein metric. However, in what follows, we will assume that both are the same.

The following result may be seen as a sanity check that convergence in distribution and the above Wasserstein metrics are reasonable concepts. For more advanced results we refer to the forthcoming paper \textcite{sw2}. 
\begin{theorem} \label{thm: Convergence Example}
Consider a metric space $(\mcx,d_{\mcx})$ and $\mcy = \{0,1\}$ with $d_{\mcy}(e,f) = C_{\mcy} \, \1\{e \neq f\}$ for all $e,f \in \mcy$.
Let $\Xi = \sum_{i=1}^{M} \delta_{X_i}, \Xi_n = \sum_{i=1}^{M_n} \delta_{X_{n,i}}$, $n \in \N$, be simple point processes in $\mcx$ with $\Xi_n \inlawto \Xi$. Let $(E_{n,ii'})_{i,i' \in \N, i<i'}$ be i.i.d.\ $\mathrm{Be}(q_n)$-variables that are independent of $\Xi_n$ and $(E_{ii'})_{i,i' \in \N, i<i'}$ i.i.d.\ $\mathrm{Be}(q)$-variables that are independent of $\Xi$, and suppose that $q_n \to q$ in $[0,1]$. Set
  $$
    \Sigma_n = \sum_{\substack{i,i'=1\\ i<i'}}^{M_n} \delta_{(E_{n,{ii'}}, \{X_{n,i}, X_{n,i'}\})}, \quad \Sigma = \sum_{\substack{i,i'=1\\ i<i'}}^{M} \delta_{(E_{ii'}, \{X_{i}, X_{i'}\})}.
  $$
Then we have for the two GOSPA metrics and any $p \geq 1$
  \begin{equation*}
      W_{\mathbb{G},R_k,p}(\mcl(\Xi_n, \Sigma_n),\mcl(\Xi, \Sigma))^p \leq 
      \begin{cases}
        2 \, W_{\hspace*{-1.5pt}\varrho,C_1,p}(\mcl(\Xi_n),\mcl(\Xi))^p + \frac{C_{\mcy}^p}{2} \abs{q_n-q}
          &\text{\hspace*{-0.5mm}\text{if $k=1$}}, \\[2mm]
        \hspace*{-2.5pt}(1\hspace*{-1.5pt}+\hspace*{-1pt}\max\{q_n,q\}) \, W_{\hspace*{-1.5pt}\varrho,C_2,p}(\mcl(\Xi_n),\mcl(\Xi))^p + \frac{C_{\mcy}^p}{2} \abs{q_n-q}
          &\text{\hspace*{-0.5mm}\text{if $k=2$}}, \\
      \end{cases}
  \end{equation*}
where $W_{\varrho,C,p}$ denotes the Wasserstein metric w.r.t.\ the OSPA metric $\varrho$ with constant $C$ and order $p$. In particular,
  \begin{equation*}
      (\Xi_n, \Sigma_n) \inlawto (\Xi, \Sigma).
  \end{equation*}
\end{theorem}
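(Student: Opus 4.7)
The plan is to build a single coupling of $(\Xi_n, \Sigma_n)$ with $(\Xi, \Sigma)$ starting from a near-optimal OSPA coupling of the vertex point processes and completing it by a carefully chosen coupling of the edge Bernoullis. Fix $\varepsilon > 0$ and take $(\Xi_n, \Xi)$ with $E[\varrho_{C_k}(\Xi_n, \Xi)^p] \leq W_{\varrho, C_k, p}(\mcl(\Xi_n), \mcl(\Xi))^p + \varepsilon$. Conditional on the realisations, set $M_1 = \min\{M_n, M\}$ and $M_2 = \max\{M_n, M\}$ and measurably select an OSPA-optimal permutation $\pi^* \in S_{M_2}$ between the vertex sets enlarged to common size $M_2$ (compare Proposition~\ref{prop: proof of GOSPA1 and GOSPA2 filling up to same size does not change}). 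For every unordered pair $\{i, i'\} \subset [M_1]$ I would then couple $E_{n, ii'}$ and $E_{\pi^*(i)\pi^*(i')}$ maximally, so that $P(E_{n, ii'} \neq E_{\pi^*(i)\pi^*(i')} \mid \Xi_n, \Xi) = |q_n - q|$, and sample all remaining edge Bernoullis independently with their prescribed parameter. Because each edge variable remains marginally Bernoulli with the correct parameter and because the pair couplings are made independent, the joint marginal laws of $(\Xi_n, \Sigma_n)$ and $(\Xi, \Sigma)$ are the ones prescribed in the theorem.

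Next I would upper-bound $d_{\G, R_k}^p$ by evaluating the GOSPA expression at $\pi^*$ (rather than at its own minimiser); the vertex part then equals $\varrho_{C_k}(\Xi_n, \Xi)^p$ exactly. Taking the conditional expectation of the remaining edge contribution given $(\Xi_n, \Xi)$ and splitting the sum into ordered pairs in $[M_1]^2$ (contributing $M_1(M_1-1) C_\mcy^p |q_n - q|$ in expectation) and ordered pairs in $[M_2]^2 \setminus [M_1]^2$ (contributing $M_1(M_2-M_1) C_\mcy^p$ for GOSPA1, and in expectation at most $(M_2-M_1)(M_1+M_2-1) \max\{q_n, q\} C_\mcy^p$ for GOSPA2), dividing by the normalisation $2 M_2(M_2-1)$ and using the elementary estimates $M_1(M_1-1) \leq M_2(M_2-1)$, $M_1(M_2 - M_1) \leq (M_2-1)(M_2-M_1)$, $(M_2-M_1)(M_1+M_2-1) \leq 2(M_2-1)(M_2-M_1)$ (the latter two trivial if $M_1 = M_2$ and following from $M_1 \leq M_2 - 1$ otherwise), combined with $C_\mcy^p \leq C_k^p$ and the pointwise lower bound $\varrho_{C_k}(\Xi_n, \Xi)^p \geq (M_2 - M_1) C_k^p / M_2$, I would obtain
\begin{equation*}
E\bigl[d_{\G, R_k}^p \bigm\vert \Xi_n, \Xi\bigr] \leq \begin{cases} 2\, \varrho_{C_1}(\Xi_n,\Xi)^p + \tfrac{1}{2} C_\mcy^p |q_n - q|, & k = 1, \\[1mm] (1 + \max\{q_n,q\})\, \varrho_{C_2}(\Xi_n,\Xi)^p + \tfrac{1}{2} C_\mcy^p |q_n - q|, & k = 2. \end{cases}
\end{equation*}
Taking total expectation and then the infimum over vertex couplings as $\varepsilon \downarrow 0$ yields the stated Wasserstein bounds.

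The convergence in distribution is then immediate: $\Xi_n \inlawto \Xi$ together with the boundedness of $\varrho_{C_k}$ gives $W_{\varrho, C_k, p}(\mcl(\Xi_n), \mcl(\Xi)) \to 0$, and $|q_n - q| \to 0$ by hypothesis, so the right-hand side vanishes and $W_{\G, R_k, p}(\mcl(\Xi_n, \Sigma_n), \mcl(\Xi, \Sigma)) \to 0$, which by Theorem~\ref{thm: Metrization of weak convergence} gives $(\Xi_n, \Sigma_n) \inlawto (\Xi, \Sigma)$. I expect the main technical hurdle to be the GOSPA2 combinatorial estimate, as this is what produces the sharp factor $(1 + \max\{q_n, q\})$ rather than a cruder multiple of $\varrho_{C_2}^p$; verifying that the coupled edge Bernoullis retain mutual independence given the vertices in each graph is a secondary but necessary bookkeeping step.
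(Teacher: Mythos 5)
Your proposal is correct and follows essentially the same route as the paper: couple the vertex processes (near-)optimally for OSPA, couple the edge Bernoullis maximally along the OSPA-optimal matching, evaluate the GOSPA expression at that matching, and bound the matched-edge term by $\tfrac12 C_{\mcy}^p\abs{q_n-q}$ and the unmatched remainder by $(\tL_+-\tL_-)/\tL_+$ times the appropriate constant, absorbed into $\varrho_{C_k}^p$. One small remark: for $k=1$ the inequality you actually need (and have, from $C_1^p\geq C_{\mcx}^p+\tfrac12 C_{\mcy}^p$) is $\tfrac12 C_{\mcy}^p\leq C_1^p$ rather than $C_{\mcy}^p\leq C_k^p$ as written; your retained factor $\tfrac12$ makes the conclusion correct, and your GOSPA2 count $(M_2-M_1)(M_1+M_2-1)$ is in fact the complete one, including the edges among pairs of unmatched vertices of the larger graph.
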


\section{Computation} \label{sec:computation} 

Based on the equivalent expressions in Propositions~\ref{prop: GTT as optimal matching (filled to n+m)} and \ref{prop: proof of GOSPA1 and GOSPA2 filling up to same size does not change}, the computation of any of the three metrics presented in Section \ref{sec:theory} amounts to solving the following quadratic assignment problem:
\begin{equation} \label{eq:ap}
  \min_{\pi \in S_n} \; \biggl[ \frac{1}{2} \sum_{(i,i') \in [n]^2} d^{(E)}_{i,i',\pi(i),\pi(i')} + \sum_{i \in [n]} d^{(V)}_{i,\pi(i)} \biggr],\tag{QAP}
\end{equation}
where $d^{(V)}_{i,j} = d_{\mcx}(v_i, w_j)^p$ and $d^{(E)}_{i,i',j,j'} = c_n \cdot d_{\mcy}(e_{ii'}, f_{jj'})^p$. The constant $c_n$ depends on the metric: we set $c_n = 1$ for the GTT metric and $c_n = \frac{1}{n-1}$ for the GOSPA metrics. For simplicity, we use $n$ for the total number of indices in all three cases (rather than $m+n$ in case of the GTT metric).

Quadratic assignment problems have been introduced in \textcite{Koopmans1957} in a more special case, where $d^{(E)}_{i,i',j,j'}$ is of the form $g_{ii'} h_{jj'}$ for matrices $(g_{ii'})$ and $(h_{jj'})$. In our setting this form is obtained e.g.\ if $\mcy \subset \R$ and $d_{\mcy}(e_{ii'}, f_{jj'})^p = (e_{ii'}-f_{jj'})^2$ since the terms $e_{ii'}^2$ and $f_{jj'}^2$ have no influence on the minimizing permutation. The more general form above has first been studied in \textcite{Lawler1963}. See \textcite{Burkard1998} for a comprehensive review.

The quadratic assignment problems are known for their complexity. \textcite{sahni1976} showed that both, the problem itself and the search for an approximate solution are NP-hard. Hence, besides some exact algorithms, mainly (meta-)heuristics have been proposed to solve (versions of) the quadratic assignment problem also for larger sizes, see \textcite{Burkard1998} and \textcite{loiola2007} for an overview. Recent approaches try to improve the accuracy of the solution and the run-time by using parallelization (see for example \textcite{AbdelkafiEtAl2019}) or by combining the best performing meta-heuristics (see \textcite{dokeroglu2016}). We refer to \textcite{abdel2018} and \textcite{silva2021} for reviews of recent developments.

In what follows we first describe a basic method to find an exact solution, which is only reasonably applicable for (very) small graphs. Then we present the fast approximating quadratic programming (FAQ) algorithm for graph matching based on the square difference of edge-weights introduced by \textcite{vogelstein2015} in a variant that includes a vertex-based cost term. Finally we give a new heuristic based on the auction algorithm for the case that the vertex-based cost term is reasonably large.

\subsection{Exact solution} \label{ssec:exactsol}

In order to compute an exact solution of \eqref{eq:ap}, we identify $\pi \in S_n$ with the permutation matrix $\tPi = (\tilde{\pi}_{i,j})_{i,j \in [n]} \in \{0,1\}^{n \times n}$ given by $\tilde{\pi}_{i,j} = \delta_{\pi(i), j}$. This matrix has row and column sums equal to one and satisfies $\tPi x = (x_{\pi(i)})_{i \in [n]}$ for any $x = (x_i)_{i \in [n]}$. To simplify notation we linearize the space $[n]^2$ of pairwise indices, i.e.\ apply the transformation $\varphi \colon [n]^2 \to [n^2]$, $(i,j) \mapsto n (j-1) + i$. Define then
\begin{equation*}
\begin{split}
  z &= (z_k)_{k \in [n]} \quad \text{with } z_{k} = \pi_{\varphi^{-1}(k)}; \\
  r &= (r_{k})_{k \in [n]} \quad \text{with } r_{k} = d^{(V)}_{\varphi^{-1}(k)}; \\
  Q &= (q_{kl})_{k,l \in [n]} \quad \text{with } q_{kl} = \tilde{d}^{(E)}_{\varphi^{-1}(k),\,\varphi^{-1}(l)},
\end{split}
\end{equation*}
where $\tilde{d}^{(E)}_{(i,j),\,(i',j')} = d^{(E)}_{i,i',j,j'}$.

This allows us to cast \eqref{eq:ap} into the binary quadratic program
\begin{equation} \label{eq:bqp}
  \min_{z \in \{0,1\}^{[n^2]}} \, \bigl[ \tfrac12 z^{\top} Q z + r^{\top} z \bigr]
  \quad \text{subject to $Az = b$}, \tag{BQP}
\end{equation}
where the side constraint expresses the fact that $z$ has to be a linearized permutation matrix; more precisely, $b = \boldsymbol{1} \in \R^{2n}$ and $A = (a_{st}) \in \{0,1\}^{(2n) \times (n^2)}$ with
\begin{equation*}
  a_{st} = \begin{cases}
    1 &\text{if $1 \leq s \leq n$ and $t = n (u-1) + s$ for some $u \in [n]$;} \\
    1 &\text{if $n+1 \leq s \leq 2n$ and $t = n (s-n-1) + u$ for some $u \in [n]$;} \\
    0 &\text{otherwise.}
           \end{cases}
\end{equation*}

This binary quadratic program can be transformed in several ways into a mixed integer linear program (MILPs), see \textcite{Burkard1998} and \textcite{ForresterHunt2020}. The later paper also illustrates how different transformation of similar BQP make a substantial difference for time efficiency. In our simulation experiments in Section~\ref{sec:eval} we let the CPLEX solver deal directly with the BQP. Depending on the concrete example computation times become prohibitive for as little as 12--15 vertices per graph. 


\subsection{The FAQ algorithm}

The original FAQ algorithm by \textcite{vogelstein2015} computes heuristic solutions to quadratic assignment problems of the Koopmans--Beckmann type without linear terms, i.e., using matrix notation,
\begin{align*}
    &\text{min } \;\; -\text{tr}(EPF^{\top}P^{\top}) \\
    &\text{s.t. } \;\; P\in\mathcal{P},
\end{align*}
where $E,F\in \R^{n\times n}$ are two matrices and $\mathcal{P} = \{ P \in \{0,1\}^{n \times n} \mvert \1^{\top} P = P \1 = 1 \}$ is the set of permutation matrices. The algorithm first finds a local solution to the continuous (indefinite) quadratic program obtained by relaxation of the constraint to the set $\mathcal{D} = \{ D \in [0,1]^{n \times n} \mvert \1^{\top} D = D \1 = 1 \}$ of doubly stochastic matrices. This is achieved by employing the Frank--Wolfe algorithm \parencite{frankwolfe1956} from a given initial position. Then this local solution is projected to the set $\mathcal{P}$. Both the Frank--Wolfe steps and the final projection require solving linear assignment problems. We refer to \textcite{vogelstein2015} for a more thorough description of the algorithm.

In the following we describe how the FAQ algorithm can be applied to \eqref{eq:ap} (including the linear term) in the special case where $\mcy \subset \R$ and $d_{\mcy}(e_{ii'}, f_{jj'})^p = (e_{ii'}-f_{jj'})^2$. Let $E = (e_{ij})_{i,j}$ and $F= (f_{ij})_{i,j}$ be the (possibly weighted) adjacency matrices of the filled up graphs $([n],v,e)$ and $([n],w,f)$, respectively, and denote by $L = (\ell_{ij})_{i,j} = \bigl( d_{\mcx}(v_i, w_j)^p \bigr)_{ij}$ the matrix of the corresponding vertex dissimilarities. Then, for any $\pi \in S_n$ and $P = (\delta_{\pi(i),j})_{ij}$, 
\begin{align} 
    \frac{1}{2} \sum_{(i,i') \in [n]^2} d^{(E)}_{i,i',\pi(i),\pi(i')} + \sum_{i \in [n]} d^{(V)}_{i,\pi(i)} 
    &= \frac{1}{2} \, c_n \sum_{(i,i') \in [n]^2} (e_{i\pi^{-1}(i')}-f_{\pi(i)i'})^2 + \sum_{i \in [n]} d_{\mcx}(v_i, w_{\pi(i)})^p \notag\\
    &= \frac{1}{2} \, c_n\,\norm{EP-PF}^2_F + \text{tr}(LP^{\top}), 
    \label{eq:graphmetric_to_matrixterm}
\end{align}
where $\norm{\cdot}_F$ is the Frobenius norm. As in \textcite{vogelstein2015}, we argue by orthogonality of $P$ and the trace formula that
$$
  \norm{EP-PF}^2_F = \text{tr}\bigl((EP-PF)^{\top} (EP-PF) \bigr) = \text{tr}(E^{\top} E) + \text{tr}(F^{\top} F) - 2 \, \text{tr}(E P F^{\top} P^{\top}).
$$
Since the first two summands do not depend on $P$, the optimal permutation minimizing (QAP) can be obtained by solving the equivalent quadratic assignment problem
\begin{align*}
    &\text{arg\,min } \;\:\,-\,c_n\,\text{tr}(EPF^{\top}P^{\top}) + \text{tr}(LP^{\top}) \\
    &\text{s.t. } \hspace*{1cm}P\in\mathcal{P}.
\end{align*}
We may then follow the concrete steps of the FAQ algorithm to obtain a heuristic solution again. Some of the computations needed for the Frank--Wolfe search change a little due to the linear term, but all the essential properties of subproblems and functions remain.  

The following proposition extends Proposition~1 of \textcite{vogelstein2015} in that it allows for a linear term and general matrices $E \in \R^{n \times n}$ rather than just unweighted adjacency matrices in $\{0,1\}^{n \times n}$. It shows that the relaxation to the set of doubly stochastic matrices $\mathcal{D}$ is a reasonable heuristic approach to solve the previously considered (QAP) problem.
\begin{proposition} \label{prop: FAQ adapted}
Let $([n],v,e)$ be a simple graph with vertex distance matrix $L$ and (weighted) adjacency matrix $E \in \R^{n \times n}$. Assume that $([n],v,e)$ has no symmetries, i.e.\ there is no $\pi \in S_n \setminus \{\text{id}\}$ such that both $d_{\mcx}(v_i,v_{\pi(i)}) = 0$  and $e_{ii'} = e_{\pi(i)\pi(i')}$ for all $i,i' \in [n]$. 
Then
\begin{align} \label{eq:relaxed_sol_unique}
    \argmin_{D\in\mathcal{D}} \;\,-\,c_n\,\text{tr}(EDE^{\top}D^{\top}) + \text{tr}(LD^{\top}) = \{I\},
\end{align}
where $I$ denotes the identity matrix.
\end{proposition}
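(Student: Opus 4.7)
The plan is to establish $f(D) := -c_n\, \mathrm{tr}(E D E^\top D^\top) + \mathrm{tr}(L D^\top) \geq f(I)$ for every $D \in \mathcal{D}$, and then show that equality forces $D = I$. Since $L$ has zero diagonal, $f(I) = -c_n \mathrm{tr}(E^2)$, so
\[
  f(D) - f(I) \;=\; c_n\bigl[\mathrm{tr}(E^2) - \mathrm{tr}(E D E D^\top)\bigr] + \mathrm{tr}(L D^\top).
\]
Using that $E$ is symmetric (the graph is undirected), a direct expansion gives
\[
  \|E D - D E\|_F^2 \;=\; \mathrm{tr}(E^2 D D^\top) + \mathrm{tr}(E^2 D^\top D) - 2\,\mathrm{tr}(E D E D^\top) \;\geq\; 0,
\]
so $\mathrm{tr}(E D E D^\top) \leq \tfrac{1}{2}\bigl[\mathrm{tr}(E^2 D D^\top) + \mathrm{tr}(E^2 D^\top D)\bigr]$. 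Because $D D^\top$ and $D^\top D$ are doubly stochastic with spectral radius $1$, the matrices $I - D D^\top$ and $I - D^\top D$ are positive semidefinite, and paired with the positive semidefinite matrix $E^2$ they yield $\mathrm{tr}(E^2 D D^\top), \mathrm{tr}(E^2 D^\top D) \leq \mathrm{tr}(E^2)$. Together with $\mathrm{tr}(L D^\top) = \sum_{ij} L_{ij} D_{ij} \geq 0$ this proves $f(D) \geq f(I)$.

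For uniqueness, equality throughout forces (i) $E D = D E$, (ii) $E^2 D D^\top = E^2$, and (iii) $D_{ij} > 0 \Rightarrow L_{ij} = 0$. Writing $D = \sum_k \alpha_k P_k$ by Birkhoff's theorem, with $\alpha_k > 0$ and distinct permutations $P_k$, the support of each $P_k$ lies in that of $D$, so (iii) forces $v_i = v_{\pi_k(i)}$ for all $i$ and $k$. Expanding (ii) as $\sum_{k,l} \alpha_k \alpha_l \,\mathrm{tr}(E^2 P_k P_l^\top) = \mathrm{tr}(E^2)$ and applying Cauchy--Schwarz to each summand, namely $\mathrm{tr}(E^2 P_k P_l^\top) = \langle E, E P_k P_l^\top\rangle_F \leq \|E\|_F\, \|E P_k P_l^\top\|_F = \|E\|_F^2 = \mathrm{tr}(E^2)$ (using $\|E P\|_F = \|E\|_F$ for permutations $P$), equality must hold in every term, yielding $E P_k = E P_l$. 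Transposing and using $E = E^\top$ additionally gives $P_k E = P_l E$, so $R_k := P_k P_1^{-1}$ satisfies $E R_k = R_k E = E$ and hence $R_k^\top E R_k = E$: the permutation $\rho_k := \pi_k \circ \pi_1^{-1}$ is an edge-automorphism of the graph. The identity $v_{\rho_k(i)} = v_{\pi_k(\pi_1^{-1}(i))} = v_{\pi_1^{-1}(i)} = v_i$ (using attribute preservation by $\pi_k$ and $\pi_1^{-1}$) makes $\rho_k$ a full graph symmetry, which by the no-symmetry hypothesis equals the identity; therefore $P_k = P_1$ for every $k$ and $D = P_1$. Applying (i) and (iii) to $D = P_1$ then exhibits $\pi_1$ as a graph symmetry, forcing $\pi_1 = \mathrm{id}$ and $D = I$.

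The main obstacle I anticipate is the Birkhoff step: using the three equality conditions to collapse every permutation appearing in the decomposition onto a common representative so that the no-symmetry hypothesis applies. The crucial lever is the tight Cauchy--Schwarz statement $\mathrm{tr}(E^2 P) = \mathrm{tr}(E^2) \Leftrightarrow E P = E$ for permutations $P$, which synchronises all the $E P_k$ across the decomposition; combined with the symmetry of $E$, this upgrades the pairwise equalities $E P_k = E P_l$ into the two-sided invariance $R_k^\top E R_k = E$ needed to invoke the hypothesis.
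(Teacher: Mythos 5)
Your proof is correct, but it takes a genuinely different route from the paper's. The paper applies the Birkhoff--von Neumann decomposition $D=\sum_\pi \alpha_\pi P_\pi$ at the outset and then bounds each term $\mathrm{tr}(EP_\pi E^\top P_\sigma^\top)=\sum_{(i,i')}e_{ii'}e_{\pi(i)\sigma(i')}$ by $\sum_{(i,i')}e_{ii'}^2$ via a refined Hardy--Littlewood--P\'olya rearrangement inequality (their Lemma on strictness), so that the inequality and its strictness for $D\neq I$ are obtained in a single pass. You instead prove $f(D)\geq f(I)$ decomposition-free, from $\|ED-DE\|_F^2\geq 0$ together with $\mathrm{tr}\bigl(E^2(I-DD^\top)\bigr)\geq 0$ for the PSD matrices $E^2$ and $I-DD^\top$, and only then invoke Birkhoff plus the equality case of Cauchy--Schwarz to force all permutations in the support of $D$ to coincide and to be graph symmetries. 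Your first half is arguably cleaner and closer in spirit to how the objective arises from $\|EP-PF\|_F^2$ in the first place, and it isolates exactly which equalities a minimizer must satisfy; the price is a longer uniqueness argument, where the paper's elementwise strictness criterion does that work automatically. Two cosmetic points: since $d_{\mcx}$ may be a pseudometric, the conclusion from $\mathrm{tr}(LD^\top)=0$ should be stated as $d_{\mcx}(v_i,v_{\pi_k(i)})=0$ rather than $v_i=v_{\pi_k(i)}$, and the identity $v_{\rho_k(i)}=v_i$ should be replaced by $d_{\mcx}(v_i,v_{\rho_k(i)})=0$ via the triangle inequality; also your Cauchy--Schwarz step tacitly assumes $E\neq 0$ to conclude $EP_kP_l^\top=E$, though the case $E=0$ is immediate from the linear term alone. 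Neither affects the validity of the argument.
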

Note that the prerequisite of the proposition is already satisfied if all pairwise distances between the vertex attributes of $([n],v,e)$ are positive. If there are symmetries, the corresponding permutation matrices are in the set of solutions along with $I$ and all the convex combinations of these matrices. 

For the proof we use the following refinement of the Hardy--Littlewood--P\'olya rearrangement inequality, which we could not locate in the literature (see \cite[Theorem 368]{hardy1934} for the original version). Its prove by induction is straightforward.
\begin{lemma} \label{lem:hlp}
  Let $n \in \N$ and let $a_1 \leq \ldots \leq a_n$ and $b_1 \leq \ldots \leq b_n$ be real numbers. For any $\pi \in S_n$ we have 
  \begin{equation*}
    \sum_{i=1}^{n} a_i b_{\pi(i)} \leq \sum_{i=1}^{n} a_i b_i.
  \end{equation*}
  The inequality is strict iff there is an $i \in [n]$ with $a_{\pi^{-1}(i)} \neq a_i$ and $b_{\pi(i)} \neq b_i$,
\end{lemma}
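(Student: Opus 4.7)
The plan is to proceed by induction on $n$, using the classical swap/bubble-sort argument for the weak inequality, and then to refine the analysis of equality cases to establish the strict-inequality characterization.

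\textbf{Base case and inductive setup.} The base case $n = 1$ is trivial. For the inductive step, fix $\pi \in S_n$ and set $k = \pi(n)$. If $k = n$, apply the induction hypothesis to $\pi$ restricted to $[n-1]$; the extra term $a_n b_n$ matches on both sides. Otherwise, let $j = \pi^{-1}(n) \in [n-1]$ and define $\pi' \in S_n$ by $\pi'(n) = n$, $\pi'(j) = k$, and $\pi'(i) = \pi(i)$ for $i \notin \{j,n\}$. A direct computation yields
\begin{equation*}
  \sum_{i=1}^{n} a_i b_{\pi'(i)} - \sum_{i=1}^{n} a_i b_{\pi(i)} = (a_n - a_j)(b_n - b_k) \geq 0,
\end{equation*}
because $j \leq n$ and $k \leq n$ and the sequences are sorted. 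Applying the inductive hypothesis to $\pi'|_{[n-1]}$ then gives the weak inequality.

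\textbf{Strictness characterization.} I would verify both directions of the iff by combining the swap step above with the induction. Strictness in the total inequality can come from strictness in the swap step or from strictness in the application of the inductive hypothesis to $\pi'$ on $[n-1]$ (or both). The swap step is strict iff $a_j \neq a_n$ and $b_k \neq b_n$, which is exactly the condition that the index $i = n$ witnesses the strict-inequality criterion for $\pi$ (since $a_{\pi^{-1}(n)} = a_j$ and $b_{\pi(n)} = b_k$). Conversely, if the swap step is an equality, then either $a_j = a_n$ or $b_k = b_n$; together with monotonicity this forces $a_j = a_{j+1} = \cdots = a_n$ or $b_k = b_{k+1} = \cdots = b_n$ respectively. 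Under either of these ``tied block'' conditions, one checks that a witness index $i \in [n]$ for $\pi$ exists iff a witness index $i \in [n-1]$ for $\pi'$ exists, because swapping values within a tied block of $a$'s or $b$'s does not create or destroy witness indices. The inductive hypothesis applied to $\pi'|_{[n-1]}$ then handles this case.

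\textbf{Expected obstacle.} The weak-inequality half is routine. The only real care is needed for the strictness characterization when the swap step is an equality: one must check in the two tied-block sub-cases ($a_j = a_n$ or $b_k = b_n$) that the existence of a witness index for $\pi$ on $[n]$ is equivalent to the existence of a witness index for $\pi'$ on $[n-1]$, so that the inductive iff transfers cleanly. This bookkeeping is the only non-obvious point, and it is resolved by noting that within a block of tied $a$-values (resp.\ $b$-values) the conditions $a_{\pi^{-1}(i)} \neq a_i$ (resp.\ $b_{\pi(i)} \neq b_i$) are invariant under the swap.
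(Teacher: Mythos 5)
Your bubble-sort induction for the weak inequality is correct and is presumably the ``straightforward induction'' the paper alludes to (the paper gives no actual proof, so there is nothing further to compare there). The genuine gap is exactly at the point you flagged as the only non-obvious step: the claim that, when the swap is an equality, a witness for $\pi$ on $[n]$ exists iff a witness for $\pi'$ on $[n-1]$ exists. When the tie is in the $a$-values ($a_j=\dots=a_n$), the swap exchanges the images $\pi(j)$ and $\pi(n)$; this indeed leaves the truth values of $a_{\pi^{-1}(i)}\neq a_i$ unchanged for every $i$, but it can flip the truth values of $b_{\pi(i)}\neq b_i$ at $i=j$ and $i=n$. Since a witness requires \emph{both} conditions at the same index, invariance of the $a$-condition alone is not enough: the swap can destroy (or create) witnesses, and the inductive ``iff'' does not transfer.

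This is not repairable by better bookkeeping, because the ``iff'' in the lemma is itself false as stated. Take $n=3$, $a=(0,1,1)$, $b=(0,0,1)$ and $\pi$ the cycle $1\mapsto 2\mapsto 3\mapsto 1$. Then $\sum_i a_ib_{\pi(i)}=a_1b_2+a_2b_3+a_3b_1=1=\sum_i a_ib_i$, yet $i=2$ is a witness: $a_{\pi^{-1}(2)}=a_1=0\neq 1=a_2$ and $b_{\pi(2)}=b_3=1\neq 0=b_2$. (In your induction this is precisely the case $j=2$, $k=1$ with $a_2=a_3$; the swap sends $b_{\pi(2)}=b_3$ to $b_{\pi'(2)}=b_1=b_2$ and kills the witness, consistent with the true equality but contradicting your transfer claim.) The converse implication fails too: for $a=b=(0,0,1,1)$ and $\pi$ the $4$-cycle one gets $\sum_i a_ib_{\pi(i)}=1<2=\sum_i a_ib_i$ with no witness. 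A correct single statement is: the inequality is strict iff there exist $i,i'$ with $(a_i-a_{i'})\bigl(b_{\pi(i)}-b_{\pi(i')}\bigr)<0$. For the symmetric case $a=b=c$ actually used in the proof of Proposition~\ref{prop: FAQ adapted}, the cleanest route bypasses the rearrangement lemma entirely: $\sum_i c_ic_{\pi(i)}=\sum_i c_i^2-\tfrac12\sum_i\bigl(c_i-c_{\pi(i)}\bigr)^2$, so equality holds iff $c_{\pi(i)}=c_i$ for all $i$, which is the conclusion that proof needs.
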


\begin{proof}[Proof of Proposition~\ref{prop: FAQ adapted}]
For permutations $\pi,\sigma \in S_n$ and their matrices ${P_{\pi} = (\delta_{\pi(i),j})_{i,j}}$, $P_{\sigma} = (\delta_{\sigma(i),j})_{i,j} \in \mcp$, we obtain
$$
  \text{tr}(EP_{\pi}E^{\top}P_{\sigma}^{\top}) = \sum_{(i,i') \in [n]^2} e_{ii'} e_{\pi(i)\sigma(i')}  \quad \text{and} \quad \text{tr}(LP_{\pi}^{\top}) =  \sum_{i \in [n]} \ell_{i,\pi(i)}.
$$ 
Trivially,
$$
\sum_{i \in [n]} \ell_{i,\pi(i)} \geq 0 = \sum_{i \in [n]} \ell_{ii}
$$
and the inequality is strict unless $\ell_{i,\pi(i)} = 0$ for all $i$.
Lemma~\ref{lem:hlp} applied to the doubly indexed sum (sort $e_{ii'}$ and linearize the index set) using as permutation $(i,i') \mapsto (\pi(i), \sigma(i'))$ yields 
\begin{equation} \label{eq:hlp_for_edges}
\sum_{(i,i') \in [n]^2} e_{ii'} e_{\pi(i)\sigma(i')} \leq \sum_{(i,i') \in [n]^2} e_{ii'}^{2} 
\end{equation}
and the inequality is strict unless for all $i,i'$ we have $e_{\pi(i)\sigma(i')} = e_{ii'}$ (and hence also $e_{\pi^{-1}(i)\sigma^{-1}(i')} = e_{ii'}$).

As in the proof of Proposition~1 in \textcite{vogelstein2015}, we use the Birkhoff--von Neumann Theorem to represent $D \in \mcd$ as a convex combination of projection matrices, $D = \sum_{\pi \in S_n} \alpha_\pi P_{\pi}$ with constants $\alpha_\pi \geq 0$ (depending on $D$) satisfying $\sum_{\pi \in S_n} \alpha_\pi = 1$. For $D \in \mcd \setminus \{I\}$ there is a $\pi_0 \in S_n \setminus \{\text{id}\}$ with $\alpha_{\pi_0} > 0$. Thus denoting the term on the left hand side of \eqref{eq:relaxed_sol_unique} by $f(D)$,
\begin{align*}
  f(D) &= - \,c_n\,\text{tr}(EDE^{\top}D^{\top}) + \text{tr}(LD^{\top}) \\[1mm]
  &= - \,c_n \sum_{\pi,\sigma \in S_n} \alpha_\pi \, \alpha_\sigma \, \text{tr}(EP_{\pi}E^{\top}P_{\sigma}^{\top}) +  \sum_{\pi \in S_n} \alpha_\pi \, \text{tr}(LP_{\pi}^{\top}) \\
  &= \sum_{\pi \in S_n} \alpha_\pi \biggl( - \alpha_\pi\,c_n \sum_{(i,i') \in [n]^2} e_{ii'} e_{\pi(i)\pi(i')} + \sum_{i \in [n]} \ell_{i,\pi(i)} \biggr) - \sum_{\substack{\pi,\sigma \in S_n \\ \pi \neq \sigma}} \alpha_\pi \, \alpha_\sigma\,c_n  \sum_{(i,i') \in [n]^2} e_{ii'} e_{\pi(i)\sigma(i')} \\
  &> - \sum_{\pi,\sigma \in S_n} \alpha_\pi \, \alpha_\sigma \,c_n \sum_{(i,i') \in [n]^2} e_{ii'}^2 \\[1mm]
  &= - \,c_n\,\text{tr}(EE^{\top}) = f(I),
\end{align*}
where the strict inequality holds by \eqref{eq:hlp_for_edges}, $\alpha_{\pi_0} > 0$ and the fact that
$$
  - \alpha_\pi \sum_{(i,i') \in [n]^2} e_{ii'} e_{\pi(i)\pi(i')} + \sum_{i \in [n]} \ell_{i,\pi(i)}
  > - \alpha_\pi \sum_{(i,i') \in [n]^2} e_{ii'}^2 ,
$$
for $\pi = \pi_0 \in S_n \setminus \{\text{id}\}$, since by the prerequisite we cannot have both $d_{\mcx}(v_i,v_{\pi(i)}) = 0$ and $e_{ii'} = e_{\pi(i)\pi(i')}$ for all $i,i' \in [n]$.
\end{proof}

\textcite{Lyzinski2016} compares the original FAQ algorithm with a number of other prominent relaxation algorithms and shows theoretically and practically that the FAQ algorithm performs best. We remark that the theoretical result (see \textcite[Theorem 1a]{Lyzinski2016}) for the QAP (without linear term) can be extended to the QAP with linear term by constructing the $p$-correlated random graphs with respect to the optimal vertex solution. 

For the practical performance, we refer to Section~\ref{sec:eval}. We use a slight adaptation of the FAQ algorithm implemented in the \textsf{R} package \textsf{iGraphMatch} (\cite{igraphmatch}). 
It stands to reason that, at least for the GOSPA metrics, where the additional factor $c_n$ makes the edge terms an order of magnitude smaller, the FAQ algorithm (and other methods) perform better than if there is no linear term. Also one might expect that starting the FAQ algorithm in the optimal solution of the linear vertex matching problem (ignoring the edges) rather than in the barycentre $\frac{1}{n}\1 \1^{\top} \in \mcd$ or at least in a proper convex combination of the two is beneficial, but, as it turns out, performance is rather worse. We therefore always start the FAQ algorithm in the barycentre. 


\subsection{An auction algorithm with externalities}

For the linear assignment problem, the auction algorithm by~\textcite{Bertsekas1988} has been used with great success, see e.g.\ Remark~1 in \textcite{MuellerEtAl2020}. We propose a heuristic extension of the algorithm for our quadratic assignment problem with substantial linear term. Unlike the FAQ algorithm, this algorithm is able to solve the QAP with linear term for arbitrary choices of the underlying metric $d_{\mcy}$.

The main idea of the auction algorithm is to think of the vertices of one graph as bidders and the vertices of the other graph as objects in an auction. We assume here that these roles are fixed, but it can be advantageous to switch the roles from time to time during the algorithm. The bidders take turns in bidding for an object that currently has the highest personal value to them, which is computed as benefit (a constant minus the cost of all terms in the distance computation that involve the bidder) minus price for the object. Bidding assigns the object to the bidder, cancelling any previous assignments involving this bidder or this object. The object prices all start at zero. Each bid increases the price of the object bid for by the difference of the personal value of the object for the bidder and the personal value of the second-most coveted object plus a small quantity~$\varepsilon$. To improve the accuracy of the auction algorithm, this constant~$\varepsilon$ should be chosen in relation to the differences in the bidders' personal values for different objects. 

For the classic auction algorithm (without edge costs), bidding stops automatically once all objects are assigned to bidders. This can be shown to be an overall optimal assignment. For our extended auction algorithm, we include the costs of incident (non-)edge attributes in the benefit, assuming that edges between the current bidder and unassigned other bidder can be matched in an optimal way for the current bidder (which in general is not true, but practical). Furthermore the personal value is discounted by a compensation fee the bidder has to pay for the external costs her bid generates. More concretely, if bidder $i$ places a bet for object $j$ any of the two following things may happen (individually or both together) that require such a compensation:\ (1) a bidder $i_0$ previously bidding for $j$ is unassigned, so bidder $i$ pays for the loss (or receives for the gain) in benefit of any assigned third-party bidder-object pair $(k,l)$, which is $d_{\mcy}(e_{ik},f_{jl}) - d_{\mcy}(e_{i_0k},f_{jl})$;\ (2) an object $j_0$ previously bid for by $i$ is unassigned, so bidder $i$ pays for the loss (or receives for the gain) in benefit of any assigned third-party bidder-object pair $(k,l)$, which is $d_{\mcy}(e_{ik},f_{jl}) - d_{\mcy}(e_{ik},f_{j_0l})$. 

The changes in our extended algorithm imply that bidding usually no longer stops once all objects are assigned and that our algorithm becomes purely heuristic. We stop the algorithm when either no more changes occur, when a full assignment has been reached for the $\textsf{stop\_at}$-th time or after a fixed number $\textsf{iter}$ of iterations, whichever happens first.

We give pseudocode for the heuristic auction algorithm in Appendix~\ref{sec: algorith}.
Algorithm~\ref{algo:AuctionExt} describes the main routine, which except for the additional stopping criteria is precisely the vanilla version of the classic auction algorithm. Algorithm~\ref{algo:compute_persvals} provides pseudocode for computing the vector of personal values of objects for a given bidder. The largest part is the computation of \DataSty{edgedists} and \DataSty{compensations}, which is our extension. 


\section{Simulation based evaluation of the algorithms} \label{sec:eval}

We perform a simulation study to evaluate the FAQ algorithm and the auction algorithm presented in Section~\ref{sec:computation} in terms of accuracy and run-time. 
Our study is limited to the GOSPA2 metric, but a partial analysis of the GOSPA1 metric indicates similar results. We omit testing the algorithms with respect to the GTT metric, among other things because the fill-up procedure (see Proposition~\ref{prop: GTT as optimal matching (filled to n+m)}) leads to graphs of size $m+n$ and thus to a larger BQP problem. 

For the input data we restrict ourselves to independent and dependent Erd\H{o}s--R\'{e}nyi graphs with independent $\mathrm{Unif}([0,1]^2)$ vertex attributes (that may be thought of as spatial positions of the vertices) and no additional edge attributes, i.e.\ using just the i.i.d.\ 1-0 information of ``edge'' or ``no edge'' from the Erd\H{o}s--R\'{e}nyi construction.

In the first scenario, we fix possibly different numbers of vertices $m$ and $n$ and edge probabilities $p$ and $q$ and generate the two graphs independently of one another. In the second scenario we create one graph of $m$ vertices with edge probability $p$ first and obtain the other one by adding i.i.d.\ $\mcn_2(0,\sigma^2 I)$-distributed deviations to the positions and flipping \mbox{(non-)edges} with probability~$r$.

For the underlying metrics we use $d_{\mcx}(x_1,x_2) = \min\{K, \norm{x_1-x_2}\}$ on the vertex space $\mcx = \R^2$ and $d_{\mcy}(y_1,y_2) = K \1\{y_1 \neq y_2\}$ on the edge space $\mcy = \{0,1\}$, with the same constant $K>0$. We choose the GOSPA2 penalty minimally as $C_2 = 2K$.

In what follows, we first consider small graphs of size $n\leq 11$, for which an exact computation with CPLEX is still easily possible, and then analyze the performance on larger graphs of sizes $20\leq n\leq 100$, for which we can usually not obtain an optimal solution. All simulations were performed on a HP ProLiant server with two Intel Xeon X5660 processors in \textsf{R}~$4.2.0$ \parencite{R2023}, using IBM ILOG CPLEX~$12.7.1$ and \textsf{iGraphmatch}~$2.0.1$. The experiments can be reproduced with the \texttt{gmspat} function in the \textsf{R}-package \textsf{graphmetrics}, obtainable at \url{https://github.com/dschuhmacher/graphmetrics}.

\subsection{Performance on small graphs}

We only present the first scenario (independent graphs) in detail, because the second scenario results in similar accuracies and similar (uniformly somewhat better) run-times, which would lead to the same conclusions.

Thus we consider pairs of independent graphs of (possibly different) sizes $m,n \in \{4,8,11\}$ constructed with edge probabilities $p = 0.3$ and $q = 0.4$, respectively. As the order of the input graphs has no influence on the performance of the algorithms, it suffices to consider the cases $(m,n) \in \{(4,4),(4,8),(4,11),(8,8),(8,11),(11,11)\}$. Furthermore, we vary the free metric parameter $K$ in $\{0.1,0.4,0.8\}$. This yields $6 \times 3 = 18$ different settings in total, for each of which we generate $100$ samples.

We fix the parameters of the auction algorithm to $\varepsilon = 0.01$, $\textsf{stop\_at} = 3$ and $\textsf{maxiter} = 100$ and start the FAQ algorithm in the barycentre matrix $\frac{1}{n}\1\1^{\top}$.
The accuracy results are depicted in Table~\ref{tab: S1O2Res}. The exact graph distance was computed by solving the corresponding BQP with CPLEX, see Subsection~\ref{ssec:exactsol}. 

We observe that the considered algorithms perform well overall, i.e.\ do usually not deviate more than $2.5\%$ from the optimal solutions, and that the auction algorithm performs better than the FAQ algorithm. There appears to be a tendency that results are most accurate for the intermediate parameter choice of $K=0.4$. 
\begin{table}[ht!]
    \centering
    \pgfplotstabletypeset[
    col sep= semicolon,
    string type,
    column type=l,
    string replace*={"}{},
    header=false,
    every head row/.style={ 
    output empty row,
    before row={%
        \hline
         \multicolumn{1}{|c|}{$(m,n)$} &\multicolumn{1}{c|}{$K$} & \multicolumn{2}{c|}{Auction}& \multicolumn{2}{c|}{FAQ}\\
        },
    after row=\hline},
    columns/0/.style={
        assign cell content/.code={%
        \pgfmathparse{int(Mod(\pgfplotstablerow,3)}%
        \ifnum\pgfmathresult=0%
            \pgfkeyssetvalue{/pgfplots/table/@cell content}%
            {\multirow{3}{*}{##1}}%
        \fi%
        },
    },
    every row no 2/.style={after row = \hline},
    every row no 5/.style={after row = \hline},
    every row no 8/.style={after row = \hline},
    every row no 11/.style={after row = \hline},
    every row no 14/.style={after row = \hline},
    every row no 17/.style={after row = \hline},
    every first column/.style={column type/.add={|}{} },
    columns/1/.style = {column type/.add={|}{}},
	every odd column/.style={column type/.add={}{|}},
    ]{SimulationData/Simulation_Res_S1_O2.csv}

    \caption{Relative deviations from the optimal solution in the first scenario. Shown are the means as well as the $5\%$- and $95\%$-quantiles (in brackets) observed over $100$ samples.}
    \label{tab: S1O2Res}
\end{table}

The run-time results for the three algorithms are shown in Table~\ref{tab: S1TimeO2}. To save space, we omit the run-times for graph pairs of different sizes $m<n$ and note that they are similar to the run-times of graph pairs of the same larger size~$n$. 
We find that both the auction algorithm and the FAQ algorithm have significantly lower run-times than the exact algorithm and that the auction algorithm is about ten times faster than the FAQ algorithm. For the exact algorithm, we observe 
that the run-times generally increase with the maximal graph size, while for the other two algorithm such an effect is only marginal, indicating that this is still a problem size where the general overhead of the two algorithms dominates the computation.

\begin{table}[ht!]
    \centering
    \pgfplotstabletypeset[
    col sep= semicolon,
    string type,
    column type=l,
    string replace*={"}{},
    header=false,
    every head row/.style={ 
    output empty row,
    before row={%
        \hline
         \multicolumn{1}{|c|}{$(m,n)$} &\multicolumn{1}{c|}{$K$} & \multicolumn{2}{c|}{CPLEX} & \multicolumn{2}{c|}{Auction}& \multicolumn{2}{c|}{FAQ}  \\
        },
    after row=\hline},
    columns/0/.style={
        assign cell content/.code={%
        \pgfmathparse{int(Mod(\pgfplotstablerow,3)}%
        \ifnum\pgfmathresult=0%
            \pgfkeyssetvalue{/pgfplots/table/@cell content}%
            {\multirow{3}{*}{##1}}%
        \fi%
        },
    },
    every row no 2/.style={after row = \hline},
    every row no 5/.style={after row = \hline},
    every row no 8/.style={after row = \hline},
    every first column/.style={column type/.add={|}{} },
    columns/1/.style = {column type/.add={|}{}},
	every odd column/.style={column type/.add={}{|}},
    ]{SimulationData/Simulation_Time_S1_O2.csv}

    \caption{Mean run-times in seconds for the first scenario as well as the $5\%$- and $95\%$-quantiles (in brackets) observed over $100$ samples. Note that CPLEX runs by default on multiple cores, while the other two algorithms only use a single core for such small problems.
}
    \label{tab: S1TimeO2}
\end{table}

\subsection{Performance on larger graphs}

Consider pairs of graphs of equal size $m_1\in \{20,50,100\}$ for the first scenario and $m_2\in \{30,50,100\}$ for the second scenario. We scale the parameters for the graph constructions according to the sizes. In the first scenario, set the edge probabilities to $p_1 = 3/m_1$ and $q_1 = 4/m_1$. In the second scenario set the edge probability of the first graph to $p_2 = 4/m_2$ and generate the perturbed graph by flipping every (non-)edge independently with probability $r_2=0.3$ and setting the variance $\sigma^2$ of the i.i.d.\ bivariate normal shifts of the vertices to $1/m_2^2$. 
The free metric parameter $K$ is chosen in $\{0.1, 0.4\}$, yielding $3 \times 2 = 6$ settings for either of the two scenarios. Again we generate $100$ samples for each combination.

We fix the parameters of the auction algorithm to $\varepsilon_1 = 4Km_1/10000$ in the first and $\varepsilon_2 = 2Km_2/10000$ in the first and second scenario, and use $\textsf{stop\_at} = 15$ and $\textsf{maxiter} = 10000$ for both scenarios. All these parameters were chosen heuristically based on the experience from smaller graphs and a few test runs. For the FAQ algorithm, we take as starting point the barycentre matrix $\frac{1}{n}\1\1^{\top}$ again.

\begin{table}[ht!]
    \centering
    \pgfplotstabletypeset[
    col sep= semicolon,
    string type,
    column type=l,
    string replace*={"}{},
    header=false,
    every head row/.style={ 
    output empty row,
    before row={%
        \hline
         \multicolumn{1}{|c|}{$m_1$} &\multicolumn{1}{c|}{$K$} & 
         \multicolumn{2}{c|}{relative deviation}&\multicolumn{4}{c|}{run-time}\\
        },
    after row=
             \multicolumn{1}{|c|}{} &\multicolumn{1}{c|}{} & 
         \multicolumn{2}{c|}{}&
         \multicolumn{2}{c|}{Auction}& \multicolumn{2}{c|}{FAQ}  \\
         \hline
         },
    columns/0/.style={
        assign cell content/.code={%
        \pgfmathparse{int(Mod(\pgfplotstablerow,2)}%
        \ifnum\pgfmathresult=0%
            \pgfkeyssetvalue{/pgfplots/table/@cell content}%
            {\multirow{2}{*}{##1}}%
        \fi%
        },
    },
    every row no 1/.style={after row = \hline},
    every row no 3/.style={after row = \hline},
    every row no 5/.style={after row = \hline},
    every first column/.style={column type/.add={|}{} },
    columns/1/.style = {column type/.add={|}{}},
	every odd column/.style={column type/.add={}{|}},
    ]{SimulationData/Simulation_Increase_S1_O2.csv}

    \caption{Relative deviation of the solution obtained with the FAQ algorithm from the solution obtained with the auction algorithm in the first scenario as well as the corresponding run-times for both algorithms. Shown are the means as well as the $5\%$- and $95\%$-quantiles (in brackets) observed over $100$ samples. Note that our implementation of the auction algorithm uses only a single thread, whereas some underlying problems in the FAQ-algorithm use multiple threads.}
    \label{tab: S1LargeO2}
\end{table}
The results with respect to accuracy and run-time in the first scenario (independent pairs of graphs) are depicted in Table~\ref{tab: S1LargeO2}. We observe that the relative performance of the auction algorithm compared to the FAQ algorithm decreases as the problems become larger. More precisely, up to a graph size $m_1$ of about $20$, the auction algorithm has on average better accuracy than the FAQ algorithm. For graphs of (much) larger size, the FAQ algorithm performs considerably better. A similar behaviour is observed for the run-times. For small graphs, the auction algorithm is faster or about as fast as the FAQ algorithm. However, the run-times of the auction algorithm substantially increase with increasing graph size, while the run-times of the FAQ algorithm increase only slowly. It is noticeable that the run-time of the auction algorithm for $K=0.1$ is much smaller than the run-time of the algorithm for $K=0.4$. However, 
this might be attributed to the (heuristic) choice of the $\varepsilon$ parameter, more precisely, choosing a smaller $\varepsilon$ value in the cases where $K=0.4$ would decrease the run-time but also the accuracy. Although we are not able to further specify neither the ratio of this increases nor the influence of the $\varepsilon$ parameter, it is still reasonable to presume that in this scenario the auction algorithm yields better results in the setting where $K=0.1$.

\begin{table}[ht!]
    \centering
    \pgfplotstabletypeset[
    col sep= semicolon,
    string type,
    column type=l,
    string replace*={"}{},
    header=false,
    every head row/.style={ 
    output empty row,
    before row={%
        \hline
         \multicolumn{1}{|c|}{$m_2$} &\multicolumn{1}{c|}{$K$} & 
         \multicolumn{2}{c|}{relative deviation}&\multicolumn{4}{c|}{run-time}\\
        },
    after row=
             \multicolumn{1}{|c|}{} &\multicolumn{1}{c|}{} & 
         \multicolumn{2}{c|}{}&
         \multicolumn{2}{c|}{Auction}& \multicolumn{2}{c|}{FAQ}  \\
         \hline
         },
    columns/0/.style={
        assign cell content/.code={%
        \pgfmathparse{int(Mod(\pgfplotstablerow,2)}%
        \ifnum\pgfmathresult=0%
            \pgfkeyssetvalue{/pgfplots/table/@cell content}%
            {\multirow{2}{*}{##1}}%
        \fi%
        },
    },
    every row no 1/.style={after row = \hline},
    every row no 3/.style={after row = \hline},
    every row no 5/.style={after row = \hline},
    every first column/.style={column type/.add={|}{} },
    columns/1/.style = {column type/.add={|}{}},
	every odd column/.style={column type/.add={}{|}},
    ]{SimulationData/Simulation_Increase_S2_O2.csv}

    \caption{Relative deviation of the solution obtained with the FAQ algorithm from the solution obtained with the auction algorithm in the second scenario as well as the corresponding run-times of both algorithms. Shown are the mean as well as the $5\%$- and $95\%$-quantiles (in brackets) observed over $100$ samples. Note that our implementation of the auction algorithm uses only a single thread, whereas some underlying problems in the FAQ-algorithm use multiple threads.}
    \label{tab: S2LargeO2}
\end{table}

The results for the second scenario (dependent pairs of graphs) are given in Table~\ref{tab: S2LargeO2}. We observe that the auction algorithm performs better on smaller graphs up to a size $m_2$ of about $30$, while the FAQ algorithm yields better results on larger graphs. However, the relative performance of the auction algorithm (compared to the FAQ algorithm) is considerably better here than in the first scenario. The current parameter settings allow the auction algorithm to take much more time, which could likely be used to further improve the solution of the FAQ algorithm by restarting it many times at random locations. Note that the run-times for the auction algorithm are consistent within the same problem size now and are also similar to the (maximal) run-times from the first scenario. In contrast, the FAQ algorithm is somewhat faster in the second scenario than in the first.

\section{Real data application: neuronal graphs}  \label{sec:realdata}

In this section, we apply the GOSPA2 metric to spatial trees derived from olfactory projection neurons in the brains of \textit{Drosophila} flies and use the results to perform statistical tests. 

Olfaction in \textit{Drosophila} is an active field of research in neuroscience with a long-standing history that helps understand neuronal processes in other insects and vertebrates \parencite{benton2022}. The operation of the olfactory system can be roughly described as follows. 
An odor is perceived by receptor neurons in the olfactory organs (antennae and maxillary palps). These  neurons are specialized to react to specific types of volatile small molecules and transmit electric signals to the antennal lobe. Receptor neurons of the same type converge at subregions of the antennal lobe, referred to as glomeruli. There they form synapses with local interneurons and projection neurons, which pass on processed signals to the higher olfactory centres of the brain (mushroom body and lateral horn).
See \textcite{masse2009} for a more detailed description of the olfactory processes in \textit{Drosophila}. 

We consider the Cell07PNs data set from the \textsf{R} package \textsf{nat} \parencite{nat}. The data set contains information about 40 projection neurons as well as their respective glomeruli and is based on the work of \textcite{jefferis2007}.  The projection neurons are traced by a fluorescent labeling technique called MARCM. Staining of the surrounding brain area revealed the brain structure that was then used to map the neurons onto a single reference brain for comparison using a nonrigid 3D image registration algorithm.
We consider trees constructed from the data points of a neuron by taking only start, end and branching points and adding edges according to their connectivity. In this way, we obtain a total of $40$ trees that characterise the spatial structure of their corresponding projection neurons. On average, these trees consist of $51$ vertices, with the smallest tree containing $12$ and the largest tree $172$ vertices. Moreover, the number of branching points varies between $4$ and $84$ with an average of $24$ and is mostly similar to the number of end points. 
For more information on the initial data processing, see \textcite{jefferis2007}. 

It has been shown that the spatial tree structure of a projection neuron is characteristic for its associated glomerulus, see for example~\textcite{wong2002}. Figure~\ref{Fig: Drosophila Neurons} shows the neurons of the Cell07PNs data grouped by their associated glomeruli. In what follows we use the GOSPA2 metric in connection with the ANOVA methods on metric spaces from \textcite{muller2023} to see if we can confirm this result. The recommended procedure in that paper to test for differences between groups of spatial objects is to perform two permutation tests that are evaluated separately, one derived from the ANOVA-like statistic in \textcite{anderson2001}, which evaluates differences in location of the objects, and one from the classic Levene statistic \parencite{levene1960}, which evaluates differences in how the objects are scattered.
\begin{figure}[ht!]
    \centering
    \begin{subfigure}[t]{0.48\linewidth}
    \raggedleft
    \includegraphics[width=0.95\linewidth]{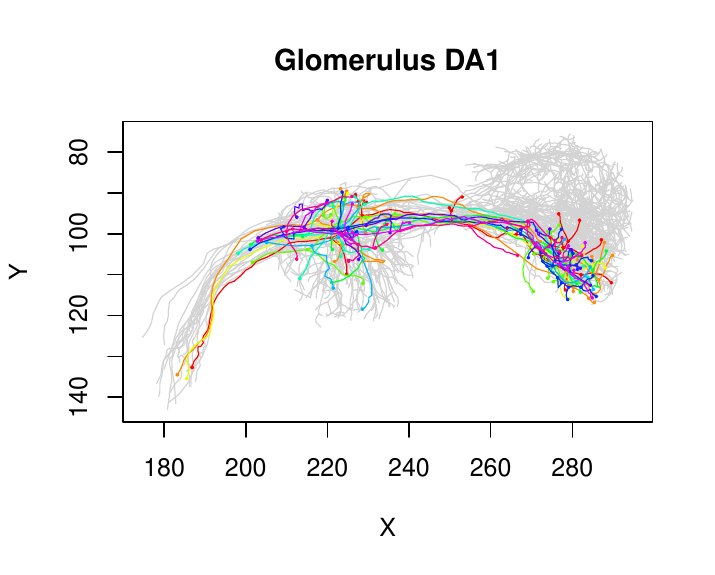}
    \end{subfigure}
 \begin{subfigure}[t]{0.48\linewidth}
    \raggedright
    \captionsetup{justification=raggedright, singlelinecheck=off}
    \includegraphics[width=0.95\linewidth]{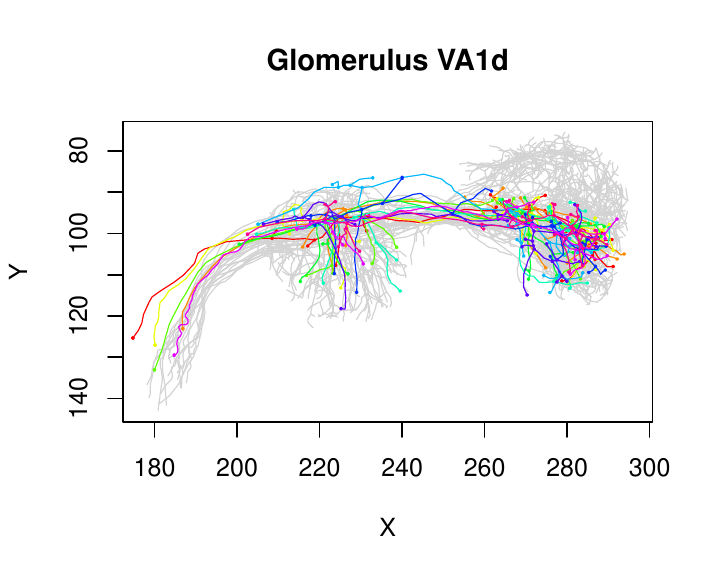}
    \end{subfigure}
        \begin{subfigure}[t]{0.48\linewidth}
    \raggedleft
    \includegraphics[width=0.95\linewidth]{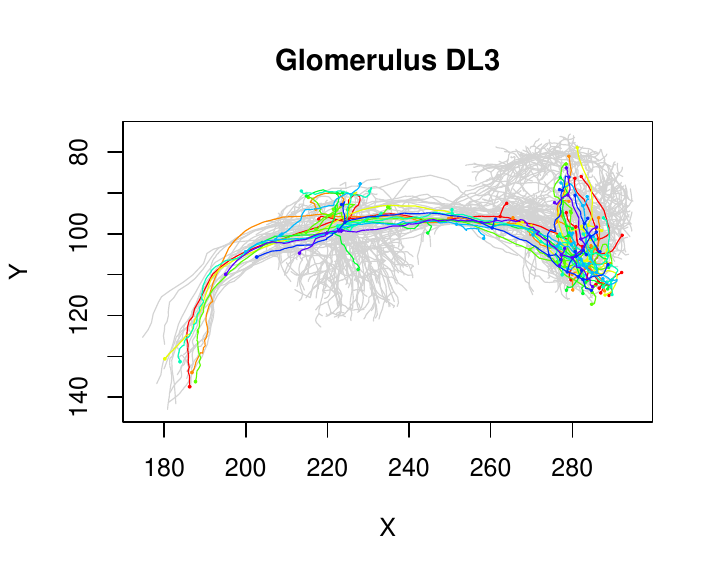}
    \end{subfigure}
 \begin{subfigure}[t]{0.48\linewidth}
    \raggedright
    \captionsetup{justification=raggedright, singlelinecheck=off}
    \includegraphics[width=0.95\linewidth]{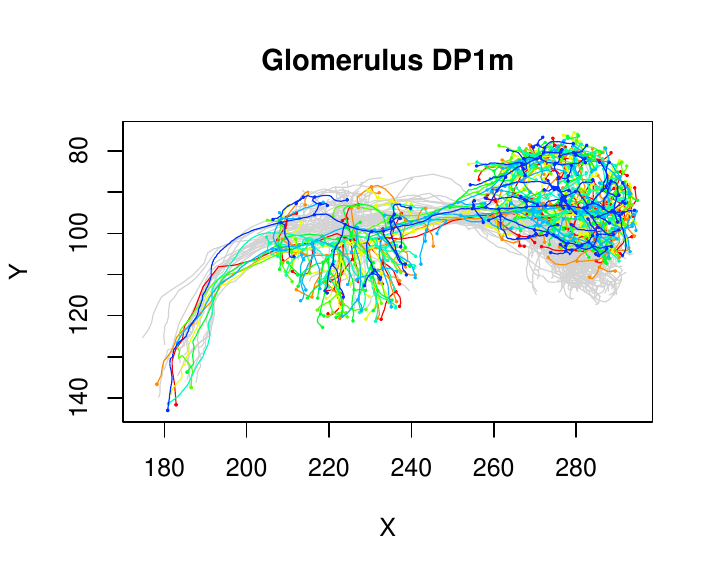}
    \end{subfigure}
    \vspace*{-3mm}
    
    \caption{Two-dimensional visualization of the 3D projection neurons from \textcite{jefferis2007} classified by their glomeruli. The structures in the middle show the synapses in the mushroom body, the structures to the right synapses in the lateral horn.}
  \label{Fig: Drosophila Neurons}
\end{figure}
To perform the tests, we calculate the graph distance of each pair of neuron graphs in the Cell07PNs data set with respect to the GOSPA2 metric with parameters $C_\mcx = C_\mcy = 100$ (and minimal constant $C_2$) using the FAQ algorithm. 
The p-values of the tests based on $n=10\,000$ permutations are shown in Table~\ref{tab: P-Vals Drosophila}.

\begin{table}[ht!]
    \centering
    \pgfplotstabletypeset[
    col sep= comma,
    string type,
    column type=l,
    string replace*={"}{},
    header=false,
    every head row/.style={ 
    output empty row,
    before row={%
        \hline
        \multicolumn{1}{|c|}{}&
        \multicolumn{7}{c|}{glomeruli}\\
        \multicolumn{1}{|c|}{}&
        \multicolumn{1}{c|}{\!\small{All}}&
        \multicolumn{1}{c|}{\!\small{DA1-DL3}\!}&
        \multicolumn{1}{c|}{\!\small{DA1-DP1m}\!}&
        \multicolumn{1}{c|}{\!\small{DA1-VA1d}\!}&
        \multicolumn{1}{c|}{\!\small{DL3-DP1m}\!}&
        \multicolumn{1}{c|}{\!\small{DL3-VA1d}\!}&
        \multicolumn{1}{c|}{\!\small{DP1m-VA1d}\!}
        \\
                \hline
        }},
    create on use/newCol/.style={
    create col/set list={,ANOVA, Levene}
    },
    skip rows between index = {0}{1},
    columns/newCol/.style={string type},
    columns = {newCol, [index] 0, [index] 1, [index] 2, [index] 3, [index] 4, [index] 5, [index] 6},
    every last row/.style={after row = \hline},
    columns/newCol/.style = {column type/.add={|}{}},
	every odd column/.style={column type/.add={}{|}},
	every even column/.style={column type/.add={}{|}},
    ]{P_Vals_Drosophila_Final.csv}

    \caption{p-values for metric space ANOVA and Levene tests for comparing the tree structure of projection neurons associated with different glomeruli. First column: omnibus test for all four glomeruli; then: tests between pairs of glomeruli.}
    \label{tab: P-Vals Drosophila}
\end{table}

The omnibus tests show significant differences in the tree structures of the neurons between glomeruli both with respect to location (here in terms of the ``usual'' localization of (sub)structures in the trees) and with respect to scatter (the overall variation from such localization). For the pairwise tests, the ANOVA statistic detects significant differences between most glomeruli except for the comparison of DA1 with DL3. Similarly, the Levene statistic sees significant differences except for the comparison of DL3 with DA1 and VA1d. These results are consistent with eyeball inspection of the data. For instance, the variance within the structures of DA1 and DL3 is quite large (maybe about the same, which explains the high p-value in the Levene test between the two) and even with the full spatial information visible from Figure~\ref{Fig: Drosophila Neurons}, it is not easy to distinguish the localization difference in arbitrary pairs of neurons associated with the two glomeruli.

It should be kept in mind that we do a rather course analysis here. The GOSPA2 distance between the tree structures sees far less than we do in Figure~\ref{Fig: Drosophila Neurons}, as only the location of nodes and ``straight'' edges between them are compared. Also, individual vertices of one tree that are far away from the other tree are only relatively mildly penalized due to the relative nature of the metric. A refined analysis based on graph metrics should consider finer segmentations of the neurons and include vertex attributes distinguishing the types of vertices (start, branching, end, other). Presumably, one would also use edge attributes and a finer edge metric.

We finally comment on the strong significance of any comparison involving glomerulus DP1m. As shown in Figure~\ref{Fig: Drosophila Neurons} the neurons associated to glomerulus DP1m are not only substantially larger than neurons of other glomeruli, but also show clear spatial differences. By applying the GOSPA2 metric we are able to detect differences in both aspects. This is because, although the GOSPA2 metric is a relative (spatial) metric, different graph sizes are penalized via the fixed cost $C_2$ of the added auxiliary nodes (see Proposition~\ref{prop: proof of GOSPA1 and GOSPA2 filling up to same size does not change}).
Moreover, by the relative construction of the metric, both the spatial distances and the differences in graph size can be balanced in a reasonable proportion to each other.  


\appendix

\section{Proofs left out in the main text}\label{sec: proofs left out}

\subsection{Proofs for the GTT metric} \label{ssec: proofs left out for gtt}

\begin{proof}[Proof of Proposition~\ref{prop: GTT as optimal matching (filled to n+m)}]
Let $I\subset [m]$ and $\pi_*\in S_n$ be minimizers according to the definition of $d_{\mathbb{G},A}(([m],v,e),([n],w,f))$. Set $l = \abs{I}$ and choose any permutation $\pi\in S_{n+m}$ satisfying
\begin{equation*}
    \pi(i) \begin{cases}
      =\pi_*(i) &\text{if $i\in I$} \\
      \in \{n+1,\ldots,n+m-l\} &\text{if $i \in [m]\setminus I$} \\
      \in [n]\setminus \pi_*(I) &\text{if $i \in \{m+1, \ldots, n+m-l\}$} \\
      = i &\text{if $i \in \{n+m-l+1, \ldots, n+m\}$}.
    \end{cases}
\end{equation*}
 \\
Then
\begin{align*}
    &d_{\mathbb{G},A}(([m],v,e),([n],w,f))\\
    &= \biggl[ (m-l)C^p + (n-l) C^p + \sum\limits_{i\in I} d_{\mcx}(v_i, w_{\pi_*(i)})^p\\
    & + \frac{1}{2} \sum\limits_{(i,i')\in I^2} d_{\mcy}(e_{ii'},f_{\pi_*(i)\pi_*(i')})^p + \frac{1}{2} \sum_{(i,i')\in [m]^2\setminus I^2 } d_{\mcy}(e_{ii'},y_0)^p + \frac{1}{2} \sum_{(j,j')\in [n]^2\setminus \pi_*(I)^2 } d_{\mcy}(y_0,f_{jj'})^p
    \biggr]^{1/p}\\
    &= \biggl[ \sum\limits_{i\in [m]\setminus I} d_{\mcx}(v_i,w_{\pi(i)})^p + \sum\limits_{i \in[n+m-l]\setminus[m]} d_{\mcx}(v_i,w_{\pi(i)})^p + \sum\limits_{i\in I} d_{\mcx}(v_i,w_{\pi(i)})^p\\
    & + \frac{1}{2} \sum\limits_{(i,i')\in I^2} d_{\mcy}(e_{ii'},f_{\pi(i)\pi(i')})^p +\frac{1}{2} \sum_{(i,i')\in [m]^2\setminus I^2} d_{\mcy}(e_{ii'},f_{\pi(i)\pi(i')})^p\\
    &+\frac{1}{2} \sum_{(i,i')\in [n+m-l]^2\setminus [m]^2} d_{\mcy}(e_{ii'},f_{\pi(i)\pi(i')})^p\biggr]\\
    &= \biggl[ \sum_{i\in[n+m]} d_{\mcx}(v_i,w_{\pi(i)})^p +\frac{1}{2} \sum\limits_{(i,i')\in [n+m]^2} d_{\mcy}(e_{ii'},f_{\pi(i)\pi(i')})^p\biggr]^{1/p}
\end{align*}
where the last equality follows as $v_i = x_* = w_{\pi(i)}$ for all $m+n-l+1\leq i\leq n+m$ and $e_{ii'} = y_0 = f_{\pi(i)\pi(i')}$ for all $m+n-l+1\leq i\vee j \leq n+m$.

On the other hand, let $\pi_*\in S_{n+m}$ be a minimizer of the expression in Proposition~\ref{prop: GTT as optimal matching (filled to n+m)}.
Set $I = \{i\in [m] : \pi_*(i) \in [n]\}$ and choose any permutation $\pi\in S_n$ satisfying $\pi(i) = \pi_*(i)$ for all $i\in I$ and $\pi([n]\setminus I)=[n]\setminus \pi_*(I)$.
Let furthermore $K = \{i\in [m] : \pi_*(i) \in [n+m]\setminus[n]\} = [m]\setminus I $, $L = \{i\in [n+m]\setminus [m] : \pi_*(i) \in [n]\} = \pi_*^{-1}([n]\setminus \pi_*(I))$ as well as $M = \{i\in [n+m]\setminus[m] : \pi_*(i) \in [n+m]\setminus[n]\}$. We remark that $w_{\pi_*(i)} = x_*$ and $f_{\pi_*(i)\pi_*(i')} = y_0$ for all $i\in K $ and $i'\in[n+m]$  and $v_i = x_*$ and $e_{ii'} = y_0$ for all $i\in[n+m]\setminus[m]$ and $i'\in[n+m]$. In particular we obtain $v_i = w_{\pi_*(i)}$ and $e_{ii'} = f_{\pi_*(i)\pi_*(i')}$ for all $i\in M$ and $i'\in[n+m]$. This together with the fact that $[n+m] = I   \overset{.}{\cup} K \overset{.}{\cup} L\overset{.}{\cup} M$ yields
\begin{align*}
    &\min_{\pi\in S_{n+m}} \biggl[ \sum_{i\in[n+m]} d_{\mcx}(v_i,w_{\pi(i)})^p +\frac{1}{2} \sum\limits_{(i,i')\in [n+m]^2} d_{\mcy}(e_{ii'},f_{\pi(i)\pi(i')})^p\biggr]^{1/p}\\
    &= \biggl[ \sum_{i\in I} d_{\mcx}(v_i,w_{\pi_*(i)})^p + \sum_{i\in K} d_{\mcx}(v_i,x_*)^p + \sum_{i\in L} d_{\mcx}(x_*,w_{\pi_*(i)})^p \\
    &\hspace*{6mm} +\frac{1}{2}\sum\limits_{(i,i')\in I^2}d_{\mcy}(e_{ii'},f_{\pi_*(i)\pi_*(i')})^p + \frac{1}{2} \sum_{i\in I}\biggl(\sum_{i'\in K} d_{\mcy}(e_{ii'},y_0)^p+ \sum_{i'\in L} d_{\mcy}(y_0,f_{\pi_*(i)\pi_*(i')})^p\biggr)\\
    &\hspace*{6mm} + \frac12 \sum_{i\in K}\biggl(\sum\limits_{i'\in [m] }d_{\mcy}(e_{ii'},y_0)^p + \sum_{i'\in [n+m]\setminus[m]} d_{\mcy}(y_0,y_0)^p\biggr) \\
    &\hspace*{6mm} + \frac12 \sum_{i\in L}\biggl(\sum\limits_{i'\in \pi_*^{-1}([n])} d_{\mcy}(y_0,f_{\pi_*(i)\pi_*(i')})^p + \sum_{i'\in \pi_*^{-1}([n+m]\setminus[n])} d_{\mcy}(y_0,y_0)^p\biggr)\biggr]^{1/p}\\
    &= \biggl[ \sum\limits_{i\in I} d_{\mcx}(v_{i},w_{\pi(i)})^p + (m-\abs{I})C^p + (n-\abs{I}) C^p\\
    &\hspace*{6mm} +\frac{1}{2} \sum\limits_{(i,i')\in I^2} d_{\mcy}(e_{ii'},f_{\pi(i)\pi(i')})^p + \frac{1}{2} \sum_{(i,i')\in [m]^2\setminus I^2 } d_{\mcy}(e_{ii'},y_0)^p + \frac{1}{2} \sum_{(j,j')\in [n]^2\setminus \pi(I)^2 } d_{\mcy}(y_0,f_{jj'})^p
    \biggr]^{1/p}\\
    &\geq d_{\mathbb{G},A}(([m],v,e),([n],w,f)).
\end{align*}
\vspace*{-12mm}

\hspace*{2mm}
\end{proof}

\begin{proof}[Proof of Theorem~\ref{thm: graph-TT is metric}]
We show the (pseudo-)metric properties of the map $d_{\mathbb{G},A}$ and start with the proof that the distance of a graph to itself is zero. Indeed, assume that $([m],v,e)=([n],w,f)$. Then $n=m$ and there exists an $\pi\in S_n$ such that $v_i = w_{\pi(i)}$ and $e_{ii'} = f_{\pi(i)\pi(i')}$ for all $i,i'\in [n]$. In particular, as $d_{\mcx}$, $d_{\mcy}$ are (pseudo-)metrics, $d_{\mcx}(x_{i},y_{\pi(i)}) = 0$ and $d_{\mcy}(e_{ii'}, f_{\pi(i)\pi(i')}) = 0$ for all $i,i'\in [n]$ and choosing $I = [n]$ yields $d_{\mathbb{G},A}(([m],v,e),([n],w,f)) = 0$.

In the case where $d_{\mcx}$, $d_{\mcy}$ are metrics, it remains to show that $d_{\mathbb{G},A}$ can distinguish between different graphs. Indeed, assume $d_{\mathbb{G},A}(([m],v,e),([n],w,f)) = 0$. As $C>0$ this implies $n = m = \abs{I}$ and thus $I = [n]$. Furthermore there exists a $\pi\in S_n$ such that $d_{\mcx}(x_{i},y_{\pi(i)}) = 0$ and $d_{\mcy}(e_{ii'}, f_{\pi(i)\pi(i')}) = 0$ for all $i,i'\in [n]$. This yields $([m],v,e)=([n],w,f)$ and thus the identity property for the case where $d_{\mcx}$, $d_{\mcy}$ are metrics.

The symmetry properties of $d_{\mcx}$ and $d_{\mcy}$ directly imply the symmetry property of $d_{\mathbb{G},A}$.

It remains to show the triangle inequality. Fill up two graphs $([m],v,e),([n],w,f)\in \mathbb{G}$ to a size $n_* \geq n+m$ by setting $v_i = x_*$ for $m+1\leq i\leq n_*$ and $e_{ij} = y_0$ for $m+1\leq i \vee j \leq n_*$ as well as $w_i = x_*$ for $n+1\leq i\leq n_*$ and $f_{ij} = y_0$ for $n+1\leq i \vee j \leq n_*$. Then the equality in Proposition~\ref{prop: GTT as optimal matching (filled to n+m)} is still true, i.e. 
\begin{align}\label{eq: Proof of GTT, larger size fill up }
   &d_{\mathbb{G},A}(([m],v,e),([n],w,f))= \min\limits_{\pi\in S_{n_*}} \biggl[ \sum_{i\in[n_*]} d_{\mcx}(v_i,w_{\pi(i)})^p +\frac{1}{2} \sum\limits_{(i,i')\in [n_*]^2} d_{\mcy}(e_{ii'},f_{\pi(i)\pi(i')})^p\biggr]^{1/p}.
  \end{align}
  
Indeed, let $\pi\in S_{n_*}$ be optimal. We aim to construct an optimal permutation $\pi'\in S_{n_*}$ that fulfills $\pi'([n_*]\setminus[n+m]) = [n_*]\setminus[n+m]$. Define $R_1 = [n]$, $R_2 = [n+m]\setminus[n]$ and $R_3 = [n_*]\setminus[n+m]$. For $j\in [3]$ consider $A_j = \{i\in [m]: \pi(i)\in R_j\}$, $D_j = \pi(A_j)$, $B_j = \{i\in [n+m]\setminus[m]: \pi(i)\in R_j\}$, $E_j = \pi(B_j)$ as well as $C_j = \{i\in [n_*]\setminus[n+m]: \pi(i)\in R_j\}$ and $F_j = \pi(C_j)$. 
Assume that $\abs{C_2} = \abs{B_3} + k_1$ for some $k_1\in \N_0$ and note that the construction of $\pi'$ in the case $\abs{C_2} \leq \abs{B_3}$ is analogous. Before constructing $\pi'$ we need to interchange $\pi(i)$ for $i \in C_2$ and $\pi(j)$ for $j \in B_3$ for as many indices as possible. This yields $\abs{C_2} = k_1$ and $B_3 = \emptyset$ as well as $\abs{F_2} = k_1$ and $E_3 = \emptyset$.

Let $k_2 = \abs{C_1}$. As $\pi$ is a permutation we have $\abs{C_3} = \abs{F_3}$ as well as $\abs{C_1\overset{.}{\cup} C_2\overset{.}{\cup} C_3} = \abs{D_3\overset{.}{\cup} E_3 \overset{.}{\cup} F_3}$. From this we obtain $\abs{A_3} = \abs{D_3} = \abs{C_1} + \abs{C_2} = k_1 + k_2$. We now construct $\pi'$ by mapping $C_2$ to the first $k_1$ elements of $D_3$ and $C_1$ to the remaining $k_2$ elements of $D_3$. Further we map the first $k_1$ elements of $A_3$ to the $k_1$ elements of $F_2$. The remaining $k_2$ elements of $A_3$ are mapped to the first $k_2$ elements of $E_2$, whose preimages in turn are mapped to $F_1$. 
For this last exchange we require $\abs{E_2}\geq k_2$. Indeed, we have $\abs{A_1} + \abs{A_2} + \abs{A_3} = m = \abs{D_2} + \abs{E_2} + \abs{F_2}$. By construction $\abs{A_2} = \abs{D_2}$ and $\abs{F_2} = k_1$ and therefore $\abs{E_2} = \abs{A_1} + \abs{A_3} - \abs{F_2} = \abs{A_1} + k_1 + k_2 - k_1 \geq k_2$.

Defining $\pi' = \pi$ on all remaining sets, we obtain a permutation $\pi'\in S_{n_*}$ that fulfills $\pi'([n_*]\setminus[n+m]) = [n_*]\setminus[n+m]$. Note that as $v_i = x_* $ for $m+1\leq i\leq n_*$ and $w_i = x_* $ for $n+1\leq i\leq n_*$ the permutation $\pi'$ is still optimal. Hence
\begin{align*}
   &\min\limits_{\pi\in S_{n+m}} \biggl[ \sum_{i\in[n+m]} d_{\mcx}(v_i,w_{\pi(i)})^p +\frac{1}{2} \sum\limits_{(i,i')\in [n+m]^2} d_{\mcy}(e_{ii'},f_{\pi(i)\pi(i')})^p\biggr]^{1/p}\\
   &= \min\limits_{\pi\in S_{n_*}} \biggl[ \sum_{i\in[n_*]} d_{\mcx}(v_i,w_{\pi(i)})^p +\frac{1}{2} \sum\limits_{(i,i')\in [n_*]^2} d_{\mcy}(e_{ii'},f_{\pi(i)\pi(i')})^p\biggr]^{1/p},
  \end{align*}
 which together with Proposition~\ref{prop: GTT as optimal matching (filled to n+m)} implies Equation~\ref{eq: Proof of GTT, larger size fill up }.
 
Now consider three graphs $([m],v,e), ([n],w,f),([l],u,g)\in\mathbb{G}$. Fill up all graphs to size $n_* = \max\{m+n,m+l,n+l\}$ by setting $v_i = x_*$ for $m+1\leq i\leq n_*$ and $e_{ij} = y_0$ for $m+1\leq i \vee j \leq n_*$, as well as $w_i = x_*$ for $n+1\leq i\leq n_*$ and $f_{ij} = y_0$ for $n+1\leq i \vee j \leq n_*$, and also $u_i = x_*$ for $l+1\leq i\leq n_*$ and $g_{ij} = y_0$ for $l+1\leq i \vee j \leq n_*$. As seen above this does not change the metric.

Let $\pi_1\in S_{n_*}$ and $\pi_2\in S_{n_*}$ be solutions of the optimal matching problems corresponding to $d_{\mathbb{G},A}(([m],v,e), ([l],u,g))$ and $ d_{\mathbb{G},A}(([l],u,g),([n],w,f))$, respectively. Take $\pi = \pi_2\circ \pi_1$. Then Equation~\ref{eq: Proof of GTT, larger size fill up } and the triangle inequalities for $d_{\mcx}$, $d_{\mcy}$ and the Minkowski inequality yield
\begin{align*}
    &d_{\mathbb{G},A}(([m],v,e), ([n],w,f))\\
    &\leq \biggl[ \sum_{i\in[n_*]} d_{\mcx}(v_i,w_{\pi(i)})^p +\frac{1}{2} \sum\limits_{(i,i')\in [n_*]^2} d_{\mcy}(e_{ii'},f_{\pi(i)\pi(i')})^p\biggr]^{1/p}\\
    &\leq \biggl[ \sum_{i\in[n_*]} \bigl(d_{\mcx}(v_i,z_{\pi_1(i)})+d_{\mcx}(z_{\pi_1(i)},y_{\pi_2(\pi_1(i))})\bigr)^p\\
    &\hspace{10mm}+\frac{1}{2} \sum\limits_{(i,i')\in [n_*]^2} \bigl(d_{\mcy}(e_{ii'},g_{\pi_1(i)\pi_1(i')})+d_{\mcy}(g_{\pi_1(i)\pi_1(i')},f_{\pi_2(\pi_1(i))\pi_2(\pi_1(i'))})\bigr)^p\biggr]^{1/p}\\
    &\leq 
    d_{\mathbb{G},A}(([m],v,e), ([l],u,g))+ d_{\mathbb{G},A}(([l],u,g), ([n],w,f)).
\end{align*}
\vspace*{-12mm}

\hspace*{2mm}
\end{proof}

\begin{proof}[Proof of Propostion~\ref{prop: GTT is special case fo GED }]
The graph edit distance seeks for an optimal inexact match between to graphs, i.e. a collection of edit operations that transforms one graph into the other with minimal costs. Valid edit operations are the deletion, insertion and substitution of vertices and edges or their labels and attributes.
By defining the cost of these edit operations below, we show that the GTT metric is a special case of the GED distance. 

Let $([n],v,e),$ $([m],w,f)\in\mathbb{G}$ be two simple attributed graphs and consider the corresponding graphs $([m+n],v,e),$ $([m+n],w,f)$ that are filled up with respect to the GTT metric (see Propostion~\ref{prop: GTT as optimal matching (filled to n+m)}). 
Then the optimal permutation $\pi_*\in S_{m+n}$ corresponds to an optimal inexact match between the initial graphs. 
More precisely, matching a vertex $v_i$ to an auxiliary vertex, i.e. $\pi_*(i)\in [m+n]\setminus [n]$ for $i\in[m]$, corresponds to the deletion of the vertex $v_i$. Analogously, matching a vertex $w_j$ to an auxiliary vertex, i.e. $\pi^{-1}_*(j)\in [m+n]\setminus [m]$ for $j\in[n]$, corresponds to the insertion of the vertex $w_j$. Matching a vertex with an auxiliary vertex results in a constant cost in the GTT metric, given by the vertex penalty $C$. Hence, we set the cost for deleting or inserting a vertex to $\text{DELNODE}(v_i) = C$ and $\text{INSNODE}(w_j) = C$, respectively. 
The matching of two non-auxiliary vertices $v_i$ and $w_j$, i.e. $\pi_*(i) = j\in [n]$ for $i\in[m]$, complies with the substitution of $v_i$ by $w_j$. The cost of matching of two non-auxiliary vertices is given by the underlying metric $d_{\mcx}$. Consequently, we define the cost for substituting a vertex~$v_i$ with a vertex~$w_j$ to be given by $\text{SUBNODE}(v_i, w_j) = d_{\mcx}(v_i,w_j)$. 
On the other hand, matching two auxiliary vertices with each other is equivalent to substituting them. 
Since both vertices coincide, the cost of such a matching (edit operation) is zero.

It remains to define the cost of edge related edit operations. We distinguish two different types. First, edge related edit operations that are independent of vertex transformations and second, edge related edit operations that are induced by vertex related edit operations.
In the GTT metric only edit operations of the second type are considered. Hence, we set the cost of edge edit operations of the first type to be infinity, i.e. $\text{DELBRANCH}(e_{ii'}) = \text{INSBRANCH}(f_{jj'}) =  \infty$ for all $i,i',j,j'\in [m+n]$ and $\text{SUBBRANCH}(e_{ii'}, f_{jj'}) = \infty$ whenever $\pi_*(i)\neq j$ or $\pi_*(i')\neq j'$. 
We now consider edge edit operations of the second type. The "matching" of an edge $e_{ii'}\neq y_0$ with an no-edge element $f_{\pi_*(i)\pi_*(i')} = y_0$ corresponds to the deletion of the edge $e_{ii'}$. This results in cost $d_{\mcy}(e_{ii'}, y_0)$ in the GTT metric. Hence, we set the cost for deleting an edge $e_{ii'}\neq y_0$ to $\text{DELBRANCH}'(e_{ii'}) = d_{\mcy}(e_{ii'}, y_0)$. Analogously, "matching" an no-edge element $e_{ii'} = y_0$ with an edge $f_{\pi_*(i)\pi_*(i')} \neq y_0$ corresponds to the insertion of an edge $f_{\pi_*(i)\pi_*(i')}$. Again, the cost of such an edit operation is defined in a consistent way by setting $\text{INSBRANCH}'(f_{\pi_*(i)\pi_*(i')}) = d_{\mcy}(y_0,f_{\pi_*(i)\pi_*(i')})$. 
Finally, "matching" of two edges $e_{ii'}\neq y_0$ and $f_{\pi_*(i)\pi_*(i')}\neq y_0$ complies with the substitution of an edge $e_{ii'}$ by an edge $f_{\pi_*(i)\pi_*(i')}$. Again, the cost in the GTT metric is given by the underlying metric $d_{\mcy}$. Consequently, we define the cost for substituting an edge~$e_{ii'}$ with an edge~$f_{\pi_*(i)\pi_*(i')}$ to be given by $\text{SUBBRANCH}(e_{ii'}, f_{\pi_*(i)\pi_*(i')}) = d_{\mcx}(e_{ii'},f_{\pi_*(i)\pi_*(i')})$. 

This yields the claim since an optimal permutation complies with an optimal inexact match and the cost of the inexact match are defined in a GTT metric consistent way. 
\end{proof}

\subsection{Proofs for the GOSPA metrics} \label{ssec: proofs left out for gospa}

\begin{proof}[Proof of Proposition~\ref{prop: proof of GOSPA1 and GOSPA2 filling up to same size does not change}]
We start with the statement for the GOSPA1 metric.
Recall that we filled up the graph $([m],v,e)$ with auxiliary elements $x_*$ and $y_*$ in such a way that $d_{\mcx}(v_i,w_{\pi(i)})^p = d_{\mcx}(x_*,w_{\pi(i)})^p = C_1^p -\frac12 C_{\mcy}^p =: \overline{C}^p_{\mcx}$ for all $m +1 \leq i\leq n$ and
$d_{\mcy}(e_{ii'}, f_{\pi(i)\pi(i')}) = d_{\mcy}(y_*, f_{\pi(i)\pi(i')}) = C_{\mcy}$ for $m+1\leq i \vee i' \leq n$ with $i\neq i'$. Hence we obtain
\begin{align*}
&d_{\mathbb{G},R_1}(([m],v,e),([n],w,f))\\
&= \frac{1}{n^{1/p}}\min\limits_{\pi \in S_{n}} \biggl[ (n-m) \overline{C}^p_{\mcx} + \sum_{i\in [m]} d_{\mcx}(v_i, w_{\pi(i)})^p\\
&\hspace*{6mm} {}+ \frac12 \frac{1}{n-1} \biggl(\sum_{i\in [m]} \biggl(  \sum\limits_{\substack{i'\in [m] \\ i\neq i'}} d_{\mcy}(e_{ii'},f_{\pi(i)\pi(i')})^p +  (n-m) C_{\mcy}^p \biggr) + (n-m)\,(n-1)\, C_{\mcy}^p\biggr)\biggr]^{1/p}\\
&=\frac{1}{n^{1/p}}\min\limits_{\pi \in S_{n}} \biggl[ \sum_{i\in [n]} d_{\mcx}(v_i, w_{\pi(i)})^p
+ \frac12 \frac{1}{n-1} \biggl( \sum_{(i,i')\in[n]^2} d_{\mcy}(e_{ii'}, f_{\pi(i)\pi(i')})^p \biggr)\biggr]^{1/p}\\
&= d_{\mathbb{G},R_1}(([n],v,e),([n],w,f)).
\end{align*}
Now consider the GOSPA2 metric. We filled-up the graph $([m],v,e)$ such that $d_{\mcx}(v_i,w_{\pi(i)})^p = d_{\mcx}(x_*,w_{\pi(i)})^p = C_2^p$ for all $m +1 \leq i\leq n$ and
$d_{\mcy}(e_{ii'}, f_{\pi(i)\pi(i')}) = d_{\mcy}(y_0, f_{\pi(i)\pi(i')})$ for $m+1\leq i \vee i' \leq n$. 
This implies
\begin{align*}
&d_{\mathbb{G},R_2}(([m],v,e),([n],w,f))\\
 &= \frac{1}{n^{1/p}}\min\limits_{\pi \in S_{n}} \biggl[ (n-m) C_2^p + \sum_{i\in[m]} d_{\mcx}(v_i, w_{\pi(i)})^p \\
&\hspace*{6mm} {} + \frac12 \frac{1}{n-1} \biggl( \sum_{(i,i')\in [m]^2} d_{\mcy}(e_{ii'}, f_{\pi(i)\pi(i')})^p +   \sum_{(i,i')\in [n]^2\setminus[m]^2} d_{\mcy}(y_0, f_{\pi(i)\pi(i')})^p \biggr) \biggr]^{1/p}\\
&=\frac{1}{n^{1/p}}\min\limits_{\pi \in S_{n}} \biggl[ \sum_{i\in[n]} d_{\mcx}(v_i, w_{\pi(i)})^p
+ \frac12 \frac{1}{n-1}  \sum_{(i,i')\in [n]^2} d_{\mcy}(e_{ii'}, f_{\pi(i)\pi(i')})^p \biggr]^{1/p}\\
&= d_{\mathbb{G},R_2}(([n],v,e),([n],w,f)).
\end{align*}
\vspace*{-12mm}

\hspace*{2mm}
\end{proof}

\begin{proof}[Proof of Theorem~\ref{thm: graph-OSPA are metric}]
We first show the identity and symmetry properties for both GOSPA metrics together.

For the identity, assume that $([m],v,e) = ([n],w,f)$. Then $n=m$ as $C_1,C_2>0$ and there exists a $\pi\in S_n$ such that $v_i = w_{\pi(i)}$ and $e_{ii'} = f_{\pi(i)\pi(i')}$ for all $i,i'\in [n]$. This yields $d_{\mcx}(v_i, w_{\pi(i)}) =0$ and $d_{\mcy}(e_{ii'},f_{\pi(i)\pi(i')})=0$ for all $i,i'\in [n]$ and hence $d_{\mathbb{G},R_k}(([m],v,e),([n],w,f))=0$ for $k \in \{1,2\}$.

In the case where $d_{\mcx}$, $d_{\mcy}$ are metrics, it remains to show that $d_{\mathbb{G},R_k}$ can distinguish between different graphs. Indeed, $d_{\mathbb{G},R_k}(([m],v,e),([n],w,f)) = 0$ implies $n=m$ as $C_k > 0$, $k \in \{1,2\}$. It implies further that there exists a $\pi\in S_n$ such that $d_{\mcx}(v_i, w_{\pi(i)}) =0$ and $d_{\mcy}(e_{ii'},f_{\pi(i)\pi(i')})=0$ for all $i,i'\in [n]$. This yields $([m],v,e)=([n],w,f)$ and thus the identity property for the case where $d_{\mcx}$, $d_{\mcy}$ are metrics.

The symmetry property follows from the construction and the symmetry properties of $d_{\mcx}$ and~$d_{\mcy}$. 

To show the triangle inequality, consider spatial graphs $([m],v,e)$, $([n],w,f)$, $([l],u,g)\in \mathbb{G}$. We start with the special case where $m=n=l$. We can still treat both GOSPA metrics together as their values agree if both graphs have the same number of vertices. Since $n=0$ is trivial, assume that $n\geq 1$ and recall that we interpret $0/0$ as $0$, so there is no contribution of an edge sum if $n=1$. 
Let $\pi_1$ and $\pi_2$ be the optimal solutions of $d_{\mathbb{G},R_1}(([m],v,e),([l],u,g))$ and $d_{\mathbb{G},R_1}(([l],u,g),([n],w,f))$, respectively, and take $\pi = \pi_2\circ \pi_1$. Then the metric properties of $d_{\mcx}$ and $d_{\mcy}$ and the Minkowski inequality yield
    \begin{align*}
        &d_{\mathbb{G},R_1}(([m],v,e),([n],w,f)) = d_{\mathbb{G},R_2}(([m],v,e),([n],w,f))\\
        &\leq \frac{1}{n^{1/p}} \biggl[ \sum_{i\in[n]} d_{\mcx}(v_i, w_{\pi(i)})^p
        + \frac12 \frac{1}{n-1} \sum_{(i,i')\in [n]^2} d_{\mcy}(e_{ii'}, f_{\pi(i)\pi(i')})^p \biggr]^{1/p}\\
        &\leq \frac{1}{n^{1/p}} \biggl[ \sum_{i\in[n]} \bigl(d_{\mcx}(v_i,z_{\pi_1(i)}) + d_{\mcx}( z_{\pi_1(i)},y_{\pi_2(\pi_1(i))})\bigr)^p\\
        &\hspace*{6mm} {} + \frac12 \frac{1}{n-1} \sum_{(i,i')\in [n]^2}  \bigl(d_{\mcy}(e_{ii'}, g_{\pi_1(i)\pi_1(i')}) + d_{\mcy}(g_{\pi_1(i)\pi_1(i')},f_{\pi_2(\pi_1(i))\pi_2(\pi_1(i'))})\bigr)^p \biggr]^{1/p}\\
        &\overset{}{\leq} \frac{1}{n^{1/p}} \biggl[ \sum_{i\in[n]} d_{\mcx}(v_i,z_{\pi_1(i)})^p + \frac12 \frac{1}{n-1} \sum_{(i,i')\in [n]^2} d_{\mcy}(e_{ii'}, g_{\pi_1(i)\pi_1(i')})^p \biggr]^{1/p}\\
        &\hspace*{6mm} {} + \frac{1}{n^{1/p}} \biggl[ \sum_{i\in[n]} d_{\mcx}( z_{\pi_1(i)},y_{\pi_2(\pi_1(i))})^p+ \frac12 \frac{1}{n-1} \sum_{(i,i')\in [n]^2}  d_{\mcy}(g_{\pi_1(i)\pi_1(i')},f_{\pi_2(\pi_1(i))\pi_2(\pi_1(i'))})^p\biggr]^{1/p}\\
        &= d_{\mathbb{G},R_2}(([m],v,e),([l],u,g)) + d_{\mathbb{G},R_2}(([l],u,g),([n],w,f))\\
        &= d_{\mathbb{G},R_1}(([m],v,e),([l],u,g)) + d_{\mathbb{G},R_1}(([l],u,g),([n],w,f)).
    \end{align*}

We now turn to the general case, i.e.\ we allow the graphs to be of different size. Due to the symmetry of the triangle inequality in $([m],v,e)$ and $([n],w,f)$, we assume w.l.o.g.\ that $m\leq n$. We assume further that at most one of the graphs is empty (otherwise the triangle inequality is trivial), which can only happen if $m=0$ or $l=0$ and $l<n$. These cases are included below, as well as any cases involving graphs of size 1 if notation is interpreted in a benevolent way, as detailed in Definitions~\ref{def: graph-OSPA}. 

In the four parts that follow, we separate the two GOSPA metrics and distinguish the cases $n>l$ and $n\leq l$. Each time we fill up the two smaller graphs to the size of the largest graph according to Proposition~\ref{prop: proof of GOSPA1 and GOSPA2 filling up to same size does not change} in a way that is consistent with the respective penalty.
\bigskip

\noindent
\emph{GOSPA1, $n>l$.} We fill up the graphs $([m],v,e)$ and $([l],u,g)$ to size $n$ based on the extended (pseudo-)metric spaces from~Proposition~\ref{prop: proof of GOSPA1 and GOSPA2 filling up to same size does not change}(a). Recall in particular that $d_{\mcx}(x_*,x)^p = d_{\mcx}(x,x_*)^p = \overline{C}^p_{\mcx} = C_1^p -\frac12 C_{\mcy}^p$ for all $x \in \mcx$ and $d_{\mcy}(y_*,y) = d_{\mcy}(y,y_*) = C_{\mcy}$ for all $y \in \mcy$. Set $v_i = x_*$, $e_{ii} = y_0$ for $m+1\leq i\leq n$ and $e_{ii'} = y_*$ for $m+1\leq i \vee i' \leq n$ with $i\neq i'$, as well as $u_i = x_*$, $g_{ii} = y_0$ for $l+1\leq i\leq n$ and $g_{ii'} = y_*$ for $l+1\leq i \vee i' \leq n$ with $i\neq i'$. 
Lemma~\ref{le: proof of GOSPA1 extending both graphs reduces size} below states for two graphs $([n_1],v,e)$ and $([n_1],u,g)$ of the same size $n_1 \leq n$ that this extension does not increase the metric, i.e.
\begin{equation} \label{eq:GOSPA1_lem1}
  d_{\mathbb{G},R_1}(([n],v,e),([n],u,g)) \leq d_{\mathbb{G},R_1}(([n_1],v,e), ([n_1],u,g)).
\end{equation}

Setting $n_1 = \max\{m,l\}$, we apply Proposition~\ref{prop: proof of GOSPA1 and GOSPA2 filling up to same size does not change} (three times) and Inequality~\eqref{eq:GOSPA1_lem1} to obtain
\begin{align*}
    d_{\mathbb{G},R_1}(([m],v,e),([n],w,f)) &=d_{\mathbb{G},R_1}(([n],v,e),([n],w,f)) \\
    &\leq d_{\mathbb{G},R_1}(([n],v,e),([n],u,g)) + d_{\mathbb{G},R_1}(([n],u,g),([n],w,f))\\
    &\leq d_{\mathbb{G},R_1}(([n_1],v,e),([n_1],u,g)) + d_{\mathbb{G},R_1}(([n],u,g),([n],w,f))\\
    &=d_{\mathbb{G},R_1}(([m],v,e),([l],u,g)) + d_{\mathbb{G},R_1}(([l],u,g),([n],w,f)),
\end{align*}
where the first inequality follows from the special case where all graphs are of the same size.
\bigskip
        
\noindent
\emph{GOSPA1, $n \leq l$.} Here we extend the (pseudo-)metric spaces $\mcx$ and $\mcy$ by adding two points each and extending the corresponding maps $d_{\mcx}$, $d_{\mcy}$. Add $x_*, x'_*$ to $\mcx$ satisfying $d_{\mcx}(x_*,x)^p = d_{\mcx}(x'_*,x)^p = d_{\mcx}(x_*,x'_*)^p = \overline{C}^p_{\mcx} = C_1^p-\frac{1}{2}C_{\mcy}^p$ for all $x \in \mcx$ and add $y_*, y'_*$ to $\mcy$ satisfying $d_{\mcy}(y_*,y) = d_{\mcy}(y'_*,y) = d_{\mcy}(y_*, y'_*) = C_{\mcy}$ for all $y \in \mcy$. It is readily checked that the extended maps  $d_{\mcx}$ and $d_{\mcy}$ are still (pseudo-)metrics.

We now fill up the graph $([m],v,e)$ in the analogous way to Proposition~\ref{prop: proof of GOSPA1 and GOSPA2 filling up to same size does not change}(a). Set $v_i = x_*$ and $e_{ii} = y_0$ for $m+1\leq i\leq l$ and $e_{ii'} = y_*$ for $m+1\leq i \vee i' \leq l$ with $i\neq i'$. Analogously, we fill up the graph $([n],w,f)$ but now using the auxiliary elements $x'_*$ and $y'_*$, i.e.\ we let $w_i = x'_*$ and $f_{ii}=y_0$ for $n+1\leq i\leq l$ and $f_{ii'} = y'_*$ for $n+1\leq i \vee i' \leq l$ with $i\neq i'$.

Lemma~\ref{le:  proof of GOSPA1 extending graphs with different elements increases size} below yields that extending the larger graph to a larger size does not decrease the metric, i.e.
\begin{equation} \label{eq:GOSPA1_lem2}
  d_{\mathbb{G},R_1}(([m],v,e),([n],w,f)) \leq d_{\mathbb{G},R_1}(([m],v,e),([l],w,f)).
\end{equation}

Note that Proposition~\ref{prop: proof of GOSPA1 and GOSPA2 filling up to same size does not change} still holds true when the original spaces are $\mcx \cup \{x_*\}$ and $\mcy \cup \{y_*\}$ (or $\mcx \cup \{x'_*\}$ and $\mcy \cup \{y'_*\}$) and the extensions are performed on $\mcx \cup \{x_*, x'_*\}$ and $\mcy \cup \{y_*, y'_*\}$. The triangle inequality for $n\leq l$ is therefore obtained by applying Inequality~\eqref{eq:GOSPA1_lem2}, Proposition~\ref{prop: proof of GOSPA1 and GOSPA2 filling up to same size does not change} (three times), as well as the triangle inequality for the special case $m=n=l$, as follows:
\begin{align*}
    d_{\mathbb{G},R_1}(([m],v,e),([n],w,f)) 
    &\leq d_{\mathbb{G},R_1}(([m],v,e),([l],w,f))\\
    &= d_{\mathbb{G},R_1}(([l],v,e),([l],w,f))\\
    &\leq d_{\mathbb{G},R_1}(([l],v,e),([l],u,g)) + d_{\mathbb{G},R_1}(([l],u,g),([l],w,f))\\
    &=d_{\mathbb{G},R_1}(([m],v,e),([l],u,g)) + d_{\mathbb{G},R_1}(([l],u,g),([n],w,f)).
\end{align*}

\noindent
\emph{GOSPA2, $n>l$.}
We fill up the graphs $([m],v,e)$ and $([l],u,g)$ to $n$ based on the extended (pseudo-)metric space of vertex attributes from Proposition~\ref{prop: proof of GOSPA1 and GOSPA2 filling up to same size does not change}(b). Recall in particular that $d_{\mcx}(x_*,x) = d_{\mcx}(x,x_*) = C_2$ for all $x \in \mcx$. Set $v_i = x_*$ for $m+1\leq i\leq n$ and $e_{ii'} = y_0$ for $m+1\leq i \vee i' \leq n$, as well as $u_i = x_*$ for $l+1\leq i\leq n$ and $g_{ii'} = y_0$ for $l+1\leq i \vee i' \leq n$. 

We show in Lemma~\ref{le: proof of GOSPA2 extending both graphs reduces size} that corresponding extensions of two graphs of the same size $n_1$ to a larger size $n$ do not increase the metric, i.e.
\begin{equation} \label{eq:GOSPA2_lem1}
  d_{\mathbb{G},R_2}(([n],v,e),([n],u,g)) \leq d_{\mathbb{G},R_2}(([n_1],v,e), ([n_1],u,g)).
\end{equation}

Applying Proposition~\ref{prop: proof of GOSPA1 and GOSPA2 filling up to same size does not change} (three times), the triangle inequality for the special case $m = n = l$ and Inequality~\eqref{eq:GOSPA2_lem1} with $n_1 = \max\{m,l\}$ yields 
\begin{align*}
    d_{\mathbb{G},R_2}(([m],v,e),([n],w,f)) &= d_{\mathbb{G},R_2}(([n],v,e),([n],w,f)) \\
    &\leq d_{\mathbb{G},R_2}(([n],v,e),([n],u,g)) + d_{\mathbb{G},R_2}(([n],u,g),([n],w,f))\\ &\leq d_{\mathbb{G},R_2}(([n_1],v,e),([n_1],u,g)) + d_{\mathbb{G},R_2}(([n],u,g)),([n],w,f)) \\
    &= d_{\mathbb{G},R_2}(([m],v,e),([l],u,g)) + d_{\mathbb{G},R_2}(([l],u,g),([n],w,f)).
        \end{align*}

\noindent
\emph{GOSPA2, $n\leq l$.}
Here we add two points $x_*,x'_*$ to the space $\mcx$ and extend the corresponding (pseudo-)metric $d_{\mcx}$. Compared to the corresponding GOSPA1 case, we have to set up the new distances more carefully. Define $d_{\mcx}(x_*,x)^p = d_{\mcx}(x,x_*)^p = \overline{C}^p_{\mcx} := C_2^p -C_{\mcy}^p$ and $d_{\mcx}(x'_*,x)^p= d_{\mcx}(x,x'_*)^p = d_{\mcx}(x_*,x'_*)^p= C_2^p$ for all $x\in\mcx$. Clearly the extended $d_{\mcx}$ is still a (pseudo-)metric.

We fill up the graph $([m],v,e)$ by setting $v_i = x_*$ for $m+1\leq i\leq l$ and $e_{ii'} = y_0$ for $m+1\leq i \vee i' \leq l$. Analogously we extend the graph $([n],w,f)$ by letting $w_i = x'_*$ for $n+1\leq i\leq l$ and $f_{ii'} = y_0$ for $n+1\leq i \vee i' \leq l$. \\ 

With regard to Proposition~\ref{prop: proof of GOSPA1 and GOSPA2 filling up to same size does not change}(b), the point $x_*$ is too close to allow an extension from $\mcx \cup \{x'_*\}$ that maintains the metric. However, it is easily seen from the proof of this proposition that the amount by which it is too close goes into an additive constant, so that
\begin{equation}\label{eq: Equality filling up smaller to larger size with constant in OSPACE}
\biggl(d_{\mathbb{G},R_2}(([l],v,e),([l],u,g))^p + C_{\mcy}^p \frac{(l-m)}{l}\biggr)^{1/p} = d_{\mathbb{G},R_2}(([m],v,e),([l],u,g)).    
\end{equation}
Since $x'_*$ is not too close to $\mcx \cup \{x_*\}$ such a constant is not needed for the extension of $([n],w,f)$ i.e.\ we have
\begin{equation} \label{eq: more filling}
  d_{\mathbb{G},R_2}(([l],u,g),([l],w,f)) = d_{\mathbb{G},R_2}(([l],u,g),([n],w,f)).
\end{equation}

By Lemma~\ref{le: GOSPA2 extending with different elements} below, we obtain
\begin{equation} \label{eq:GOSPA2_lem2}
  d_{\mathbb{G},R_2}(([m],v,e),([n],w,f)) \leq \biggl(d_{\mathbb{G},R_2}(([l],v,e),([l],w,f))^p + C_{\mcy}^p \frac{(n-m)}{n}\biggr)^{1/p},
\end{equation}
i.e.\ extending \emph{both} graphs to a larger size does not decrease the metric by more than the above mentioned constant (adapted to new graph cardinalities).

Combining Inequality~\eqref{eq:GOSPA2_lem2}, Equations~\eqref{eq: Equality filling up smaller to larger size with constant in OSPACE} and~\eqref{eq: more filling}, and the triangle inequality for the special case $m =n = l$, we obtain 
\begin{align*}
    d_{\mathbb{G},R_2}(&([m],v,e),([n],w,f))\\
    &\leq \biggl(d_{\mathbb{G},R_2}(([l],v,e),([l],w,f))^p + C_{\mcy}^p \frac{(n-m)}{n}\biggr)^{1/p}\\
    &\leq \biggl(d_{\mathbb{G},R_2}(([l],v,e),([l],u,g))^p + C_{\mcy}^p \frac{(n-m)}{n}\biggr)^{1/p} + d_{\mathbb{G},R_2}(([l],u,g),([l],w,f)) \\
    &\leq \biggl(d_{\mathbb{G},R_2}(([l],v,e),([l],u,g))^p + C_{\mcy}^p \frac{(l-m)}{l} \biggr)^{1/p}+ d_{\mathbb{G},R_2}(([l],u,g),([l],w,f))\\
    &= d_{\mathbb{G},R_2}(([m],v,e),([l],u,g)) + d_{\mathbb{G},R_2}(([l],u,g),([n],w,f)).
\end{align*}
\vspace*{-12mm}

\hspace*{2mm}
\end{proof}

In what follows we present the four lemmas used in the proof of Theorem~\ref{thm: graph-OSPA are metric}.
\begin{lemma}\label{le: proof of GOSPA1 extending both graphs reduces size} Let $([n_1],v,e), ([n_1],u,g) \in \mathbb{G}$. Construct extensions $([n],v,e)$ and $([n],u,g)$ with $n_1 \leq n$ as in the proof of Theorem~\ref{thm: graph-OSPA are metric} for the case GOSPA1, $n > l$. Then
$$d_{\mathbb{G},R_1}(([n],v,e),([n],u,g)) \leq d_{\mathbb{G},R_1}(([n_1],v,e), ([n_1],u,g)).$$
\end{lemma}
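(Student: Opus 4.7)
The strategy is to take an optimiser $\pi_* \in S_{n_1}$ of the right-hand side and lift it to a permutation $\pi \in S_n$ by setting $\pi(i) = \pi_*(i)$ for $i \in [n_1]$ and $\pi(i) = i$ for $n_1 < i \leq n$. The key observation will be that \emph{every} summand in the extended optimisation indexed by at least one new coordinate (i.e.\ $> n_1$) contributes zero, so that the full cost of $\pi$ coincides, up to the external prefactor $1/n^{1/p}$ and the edge normalisation $1/(n-1)$, with the cost of $\pi_*$ on the smaller problem.

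For vertex contributions with $i > n_1$ this is immediate: $v_i = u_i = x_*$, so $d_{\mcx}(v_i, u_{\pi(i)}) = 0$ by the pseudometric axiom. For edge contributions indexed by $(i,i') \in [n]^2$ with $i \vee i' > n_1$, the block structure of $\pi$ (it preserves both $[n_1]$ and $\{n_1+1,\ldots,n\}$) together with the definition of the extension forces $e_{ii'}$ and $g_{\pi(i)\pi(i')}$ to coincide, being either both $y_*$ (when $i \neq i'$) or both $y_0$ (when $i = i'$). In either case the term vanishes.

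Setting $A := \sum_{i \in [n_1]} d_{\mcx}(v_i, u_{\pi_*(i)})^p \geq 0$ and $B := \sum_{(i,i') \in [n_1]^2} d_{\mcy}(e_{ii'}, g_{\pi_*(i)\pi_*(i')})^p \geq 0$, the bound obtained from $\pi$ reads
\[
  d_{\mathbb{G},R_1}(([n],v,e),([n],u,g)) \;\leq\; \frac{1}{n^{1/p}} \Bigl[ A + \tfrac{1}{2(n-1)} B \Bigr]^{1/p},
\]
whereas $d_{\mathbb{G},R_1}(([n_1],v,e),([n_1],u,g)) = \frac{1}{n_1^{1/p}} [A + \tfrac{1}{2(n_1-1)} B]^{1/p}$. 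Comparing $p$-th powers, the desired inequality reduces to the purely numerical statement
\[
  \frac{A}{n} + \frac{B}{2n(n-1)} \;\leq\; \frac{A}{n_1} + \frac{B}{2n_1(n_1-1)},
\]
which follows term-by-term from $n \geq n_1$ and $n(n-1) \geq n_1(n_1-1)$ (both with equality iff $n = n_1$). The degenerate case $n_1 = 1$ is absorbed by the convention $0/0 := 0$ (which forces $B = 0$).

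There is no serious obstacle here: the extension has been engineered precisely so the added indices cancel out, and the remaining inequality is immediate. The only care needed is the case distinction verifying that the block structure of $\pi$ makes all new edge types match, which is straightforward.
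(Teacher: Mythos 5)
Your proof is correct and follows essentially the same route as the paper: both arguments exploit that the two graphs are filled up with identical auxiliary elements, so that any block-preserving permutation (the paper minimises over all such, you lift the optimal $\pi_*\in S_{n_1}$ by the identity) makes every summand involving a new index vanish, after which the inequality reduces to comparing the normalisations $1/n^{1/p}$ versus $1/n_1^{1/p}$ and $1/(n-1)$ versus $1/(n_1-1)$. Your write-up is if anything slightly more explicit than the paper's about this final numerical comparison and the $n_1=1$ degeneracy.
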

\begin{proof}
Since both graphs are filled up with the same elements, i.e.\ $v_i = u_i$ for $n_1+1 \leq i\leq n$ and $e_{ii'} = g_{ii'}$ for $n_1+1 \leq i \vee i' \leq n$, we obtain
        \begin{align*}
            d_{\mathbb{G},R_1}(&([n],v,e),([n],u,g))\\
            &\leq  \frac{1}{n^{1/p}}\min\limits_{\substack{\pi \in S_{n}\\ \pi([n_1]) = [n_1]}} \biggl[\sum_{i\in[n]} 
             d_{\mcx}(v_i, u_{\pi(i)})^p  + \frac12 \frac{1}{n-1} \sum_{(i,i')\in[n]^2} d_{\mcy}(e_{ii'},g_{\pi(i)\pi(i')})^p   \biggr]^{1/p}\\
            &= \frac{1}{n^{1/p}}\min\limits_{\pi \in S_{n_1}} \biggl[\sum_{i\in[n_1]} 
             d_{\mcx}(v_i, u_{\pi(i)})^p  + \frac12 \frac{1}{n-1} \sum_{(i,i')\in[n_1]^2} d_{\mcy}(e_{ii'},g_{\pi(i)\pi(i')})^p   \biggr]^{1/p}\\
            &\leq d_{\mathbb{G},R_1}(([n_1],v,e),([n_1],u,g)).
        \end{align*}
\vspace*{-12mm}

\hspace*{2mm}
\end{proof}

\begin{lemma}\label{le:  proof of GOSPA1 extending graphs with different elements increases size} Let $([m],v,e), ([n],w,f) \in \mathbb{G}$. Construct the extension $([l],w,f)$ of $([n],w,f)$ as in the proof of Theorem~\ref{thm: graph-OSPA are metric} for the case GOSPA1, $n \leq l$. Then
    $$d_{\mathbb{G},R_1}(([m],v,e),([n],w,f)) \leq d_{\mathbb{G},R_1}(([m],v,e),([l],w,f)).$$
\end{lemma}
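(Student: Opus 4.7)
The plan is to work directly with the formulas of Definition~\ref{def: graph-OSPA}a) and, given any $\pi$ attaining the minimum in the RHS (viewed as an injection $[m] \to [l]$, since $d_{\G,R_1}$ depends only on $\pi|_{[m]}$), to construct a competitor $\pi' \in S_n$ for the LHS satisfying $\Phi(\pi')/n \leq \Psi(\pi)/l$, where $\Phi(\pi')$ and $\Psi(\pi)$ denote the bracketed expressions on the two sides. First I would partition $[m] = A \sqcup B$ with $A = \{i \in [m] : \pi(i) \in [n]\}$ of size $k$, and pick any injection $\sigma \colon B \to [n] \setminus \pi(A)$ (possible since $|[n] \setminus \pi(A)| = n-k \geq m-k$). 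Set $\pi'|_A = \pi|_A$, $\pi'|_B = \sigma$, and extend $\pi'$ arbitrarily to a permutation of $[n]$.

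Next I would do a termwise cost comparison. Setting
\begin{equation*}
V_1 = \sum_{i \in A} d_{\mcx}(v_i, w_{\pi(i)})^p, \qquad E_1 = \sum_{(i,i') \in A^2} d_{\mcy}(e_{ii'}, f_{\pi(i)\pi(i')})^p,
\end{equation*}
these quantities appear identically in $\Phi(\pi')$ and $\Psi(\pi)$. For indices in $B$, on the RHS the vertex contribution is $(m-k)\overline{C}_{\mcx}^p$ with $\overline{C}_{\mcx}^p := C_1^p - \tfrac12 C_{\mcy}^p$ (each $w_{\pi(i)} = x'_*$), and the $(m-k)(m+k-1)$ off-diagonal pairs in $[m]^2 \setminus A^2$ each contribute $C_{\mcy}^p$ (since their $f$-value is $y'_*$). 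The LHS analogues are bounded by $C_{\mcx}^p$ and $C_{\mcy}^p$ per term, respectively, using the diameter assumptions; combined with $C_{\mcx}^p \leq \overline{C}_{\mcx}^p$ (which follows from the standing condition $C_1^p \geq C_{\mcx}^p + \tfrac12 C_{\mcy}^p$), I obtain the termwise bounds $V_\Phi \leq V_\Psi$ and $\hat{E}_\Phi \leq \hat{E}_\Psi$ for the full $[m]$-sums appearing in $\Phi(\pi')$ and $\Psi(\pi)$. It therefore suffices to prove the scalar inequality obtained after replacing $V_\Phi, \hat{E}_\Phi$ in $\Phi(\pi')$ by their upper bounds $V_\Psi, \hat{E}_\Psi$.

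For this scalar reduction I would multiply through by $nl$, use $n(l-m) - l(n-m) = m(l-n)$ to pull out an overall factor of $(l-n) > 0$, and reduce to
\begin{equation*}
m C_1^p \geq V_\Psi + \frac{(l+n-1)\hat{E}_\Psi + m C_{\mcy}^p\bigl[nl - m(l+n-1)\bigr]}{2(n-1)(l-1)}.
\end{equation*}
Inserting $V_\Psi \leq m\overline{C}_{\mcx}^p$ and $\hat{E}_\Psi \leq m(m-1) C_{\mcy}^p$ and using the collapsing identity $(m-1)(l+n-1) + nl - m(l+n-1) = nl - l - n + 1 = (l-1)(n-1)$, the right-hand side simplifies to exactly $m\overline{C}_{\mcx}^p + \tfrac12 m C_{\mcy}^p = m C_1^p$, so the inequality holds (with equality in the worst case). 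The main obstacle is this tight algebraic book-keeping: the different normalisations $n$ and $l$ together with the inner scalings $1/(n-1)$ and $1/(l-1)$ make the comparison delicate, and since the bound is attained with equality no slack can be wasted; the identity $ln - l - n + 1 = (l-1)(n-1)$ is what aligns the vertex excess $\tfrac12 m C_{\mcy}^p$ with the fractional contribution and makes everything collapse cleanly.
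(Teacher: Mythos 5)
Your proof is correct and is essentially the paper's argument run in the opposite direction: the paper bounds the left-hand side from above by twice applying the elementary inequality $a/n \leq ((l-n)C+a)/l$ for $a\leq nC$ (once for the inner edge normalisation $1/(n-1)$ with $C=C_{\mcy}^p$, once for the outer $1/n$ with $C=C_1^p$) and then shows the resulting minimum over $S_n$ equals the minimum over $S_l$ via the same observation you use to build $\pi'$, namely that redirecting a match from an auxiliary vertex $x'_*$ (at cost $\overline{C}^p_{\mcx}$, with incident $y'_*$-edges at cost $C_{\mcy}^p$) into $[n]$ can only decrease every vertex and edge term. Your explicit clearing of denominators and the collapsing identity $(l-1)(n-1)=nl-l-n+1$ verify exactly the same tight scalar inequality that the paper's two nested fraction bounds encode, so the two write-ups differ only in bookkeeping, not in substance.
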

\begin{proof}
We remark that $a\leq nC$ implies $\frac{a}{n} \leq \frac{(l-n) C +a }{l}$. Using this together with the fact that the metrics $d_{\mcx}^p$ and $d_{\mcy}^p$ are bounded by
$\overline{C}^p_1$ and $C_{\mcy}^p$, respectively, we obtain
        \begin{align*}
            d_{\mathbb{G},R_1}(&([m],v,e),([n],w,f))\\
            &=\frac{1}{n^{1/p}}\min\limits_{\pi \in S_{n}} \biggl[ (n-m) C_1^p + \sum_{i\in[m]}  d_{\mcx}(v_i, w_{\pi(i)})^p\\
            & \hspace*{6mm} + \frac12  \frac{1}{n-1}\sum_{i\in[m]}\biggl(\sum_{i'\in[m]}  d_{\mcy}(e_{ii'}, f_{\pi(i)\pi(i')})^p + (n-m) C_{\mcy}^p\biggr)    \biggr]^{1/p}\\
            &\leq \frac{1}{n^{1/p}}\min\limits_{\pi \in S_{n}} \biggl[ (n-m) C_1^p + \sum_{i\in[m]}  d_{\mcx}(v_i, w_{\pi(i)})^p\\
            & \hspace*{6mm}+  
             \frac12 \frac{1}{l-1}\sum_{i\in[m]} \biggl(\sum_{i'\in[m]} d_{\mcy}(e_{ii'}, f_{\pi(i)\pi(i')})^p +(l-n)C_{\mcy}^p + (n-m) C_{\mcy}^p \biggr)\biggr]^{1/p}\\
            &\leq \frac{1}{l^{1/p}}\min\limits_{\pi \in S_{n}} \biggl[ (l- n) C_1^p + (n-m) C_1^p + \sum_{i\in[m]} 
             d_{\mcx}(v_i, w_{\pi(i)})^p\\
            & \hspace*{6mm}+ 
             \frac12 \frac{1}{l-1} \sum_{i\in[m]} \biggl( \sum_{i'\in[m]} d_{\mcy}(e_{ii'}, f_{\pi(i)\pi(i')})^p  +(l-m) C_{\mcy}^p\biggr)  \biggr]^{1/p}\\
            &=\frac{1}{l^{1/p}}\min\limits_{\pi \in S_{l}} \biggl[ (l-m) C_1^p + \sum_{i\in[m]} 
             d_{\mcx}(v_i, w_{\pi(i)})^p\\
            & \hspace*{6mm}  + \frac12 \frac{1}{l-1}\sum_{i\in[m]}  \biggl(\sum_{i'\in[m]}   d_{\mcy}(e_{ii'}, f_{\pi(i)\pi(i')})^p+(l-m) C_{\mcy}^p  \biggr)  \biggr]^{1/p}\\
            &= d_{\mathbb{G},R_1}(([m],v,e),([l],w,f)).
        \end{align*}
The second last equality follows from the fact that there is a minimizer $\pi_0\in S_l$ for the brackets on the right hand side that satisfies $\pi_0([n]) \subset [n]$. To see this, let $\pi_*\in S_l$ be any minimizer for these brackets. Assume that there exists an $i\in[n]$ such that $\pi_*(i)\in[l]\setminus [n]$. By construction of the extended graph we have $w_{\pi_*(i)} = x'_*$ and $f_{\pi_*(i)\pi_*(i')} = y'_*$ for all $i'\in[n]$, $i' \neq i$, and thus 
        \begin{align*}
            d_{\mcx}(&v_i, w_{\pi_*(i)})^p  + \frac12 \frac{1}{l-1} \biggl(  \sum\limits_{\substack{i'\in[m] \\ i \neq i'}} d_{\mcy}(e_{ii'}, f_{\pi_*(i)\pi_*(i')})^p  + (l-m) C_{\mcy}^p \biggr)\\
            &= d_{\mcx}(v_i, x'_*)^p  + \frac12 \frac{1}{l-1} \biggl(   \sum\limits_{\substack{i'\in[m] \\ i \neq i'}} d_{\mcy}(e_{ii'},y'_*)^p  + (l-m) C_{\mcy}^p\biggr)\\
            &\geq d_{\mcx}(v_i, w_{\pi(i)})^p  + \frac12 \frac{1}{l-1} \biggl(  \sum\limits_{\substack{i'\in[m] \\ i \neq i'}} d_{\mcy}(e_{ii'}, f_{\pi(i)\pi(i')})^p  + (l-m) C_{\mcy}^p \biggr)
        \end{align*}
        for any $\pi\in S_{n}$. Thus there exists an optimal solution $\pi_0\in S_l$ such that $\pi_0([n]) \subset [n]$.
\end{proof}

\begin{lemma}\label{le: proof of GOSPA2 extending both graphs reduces size}
Let $([n_1],v,e), ([n_1],u,g) \in \mathbb{G}$. Construct extensions $([n],v,e)$ and $([n],u,g)$ with $n \geq n_1$ as in the proof of Theorem~\ref{thm: graph-OSPA are metric} for the case GOSPA2, $n>l$. Then
        \begin{equation}
            d_{\mathbb{G},R_2}(([n],v,e),([n],u,g)) \leq d_{\mathbb{G},R_2}(([n_1],v,e), ([n_1],u,g)).
        \end{equation}
\end{lemma}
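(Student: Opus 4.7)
The plan is to take an optimal permutation from the smaller problem, extend it identically to the larger index set, and verify that (i) the added indices contribute nothing because both graphs are filled up with the same auxiliary data, and (ii) the two normalizing factors only improve (i.e.\ shrink) when passing from $n_1$ to $n$.

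Let $\pi_* \in S_{n_1}$ be a minimizer in the definition of $d_{\mathbb{G},R_2}(([n_1],v,e),([n_1],u,g))$. Define $\pi \in S_n$ by $\pi(i) = \pi_*(i)$ for $i \in [n_1]$ and $\pi(i) = i$ for $n_1 < i \leq n$. By construction of the extensions, $v_i = u_{\pi(i)} = x_*$ for every $i \in [n]\setminus[n_1]$, and $e_{ii'} = g_{\pi(i)\pi(i')} = y_0$ whenever $i \vee i' > n_1$. Thus, using $\pi$ as a feasible choice in the minimization defining $d_{\mathbb{G},R_2}(([n],v,e),([n],u,g))$,
\begin{align*}
  d_{\mathbb{G},R_2}(([n],v,e),([n],u,g))
  &\leq \frac{1}{n^{1/p}} \biggl[ \sum_{i\in[n_1]} d_{\mcx}(v_i, u_{\pi_*(i)})^p \\
  &\hspace*{16mm} + \frac12 \frac{1}{n-1} \sum_{(i,i')\in[n_1]^2} d_{\mcy}(e_{ii'},g_{\pi_*(i)\pi_*(i')})^p \biggr]^{1/p}.
\end{align*}

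Since $n \geq n_1 \geq 1$, we have $\frac{1}{n^{1/p}} \leq \frac{1}{n_1^{1/p}}$ and, under the convention $0/0 := 0$, also $\frac{1}{n-1} \leq \frac{1}{n_1-1}$ (with the edge sum vanishing in the degenerate case $n_1 = 1$). Plugging these into the bracket gives
\begin{align*}
  d_{\mathbb{G},R_2}(([n],v,e),([n],u,g))
  &\leq \frac{1}{n_1^{1/p}} \biggl[ \sum_{i\in[n_1]} d_{\mcx}(v_i, u_{\pi_*(i)})^p \\
  &\hspace*{16mm} + \frac12 \frac{1}{n_1-1} \sum_{(i,i')\in[n_1]^2} d_{\mcy}(e_{ii'},g_{\pi_*(i)\pi_*(i')})^p \biggr]^{1/p} \\
  &= d_{\mathbb{G},R_2}(([n_1],v,e),([n_1],u,g)),
\end{align*}
by the choice of $\pi_*$ as a minimizer. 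The main (very mild) subtlety is the boundary case $n_1 = 1$, where the edge term disappears by the $0/0 := 0$ convention, so the same argument goes through trivially.
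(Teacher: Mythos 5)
Your proof is correct and follows essentially the same route as the paper's: the paper restricts the minimization over $S_n$ to permutations with $\pi([n_1])=[n_1]$ (so the auxiliary indices contribute nothing) and then uses the monotonicity of the factors $n^{-1/p}$ and $(n-1)^{-1}$, whereas you exhibit the single feasible permutation obtained by extending the optimizer $\pi_*$ by the identity, which amounts to the same estimate.
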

\begin{proof}
As the extended graphs are filled up with the same auxiliary element, we obtain
        \begin{align*}
            &d_{\mathbb{G},R_2}(([n],v,e),([n],u,g))\\
            &\leq \frac{1}{n^{1/p}}\min\limits_{\substack{\pi \in S_{n}\\ \pi([n_1]) = [n_1]}} \biggl[
             \sum_{i\in[n]} d_{\mcx}(v_i, u_{\pi(i)})^p
            + \frac12 \frac{1}{n-1} \sum_{(i,i')\in[n]^2} d_{\mcy}(e_{ii'}, g_{\pi(i)\pi(i')})^p \biggr)\biggr]^{1/p}\\
            &= \frac{1}{n^{1/p}}\min\limits_{\pi \in S_{n_1}} \biggl[ \sum_{i\in[n_1]} d_{\mcx}(v_i, u_{\pi(i)})^p
            + \frac12 \frac{1}{n-1} \sum_{(i,i')\in[n_1]^2} d_{\mcy}(e_{ii'}, g_{\pi(i)\pi(i')})^p \biggr]^{1/p}\\
            &\leq d_{\mathbb{G},R_2}(([n_1],v,e), ([n_1],u,g)).
        \end{align*}
        \vspace*{-12mm}

\hspace*{2mm}
\end{proof}

\begin{lemma} \label{le: GOSPA2 extending with different elements}
Suppose that $p=1$ or $p>1$ and $d_{\mcy}(y_1,y_2) \leq \max\{d_{\mcy}(y_1,y_0),d_{\mcy}(y_0,y_2)\}$ for all $y_1,y_2\in\mcy$. Let $([m],v,e), ([n],w,f) \in \mathbb{G}$. Construct extensions $([l],v,e)$ and $([l],w,f)$ as in the proof of Theorem~\ref{thm: graph-OSPA are metric} for the case GOSPA2, $n \leq l$. Then
$$d_{\mathbb{G},R_2}(([m],v,e),([n],w,f))  \leq \biggl(d_{\mathbb{G},R_2}(([l],v,e),([l],w,f))^p + C_{\mcy}^p \frac{(n-m)}{n}\biggr)^{1/p}.$$
\end{lemma}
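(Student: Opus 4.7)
The plan is to take any permutation $\pi_* \in S_l$ realising the minimum on the right-hand side, construct from it a candidate $\pi \in S_n$ for the left-hand side, and compare the two associated cost expressions term by term. Classify the indices by $A_1 = \{i \in [m] \colon \pi_*(i) \in [n]\}$, $A_2 = [m]\setminus A_1$, and $B_1 = \{i \in [l]\setminus[m] \colon \pi_*(i) \in [n]\}$. Since $m \leq n$, one has $|B_1| \geq |A_2|$, and I claim one may assume $A_2 = \emptyset$: for any $i_0 \in A_2, i_1 \in B_1$ the swap $\pi_*(i_0) \leftrightarrow \pi_*(i_1)$ decreases $|A_2|$ by one without increasing the cost. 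Indeed, the vertex change is $d_{\mcx}(v_{i_0}, w_{\pi_*(i_1)})^p - (C_2^p - C_{\mcy}^p) \leq C_{\mcx}^p + C_{\mcy}^p - C_2^p \leq 0$ by the penalty hypothesis, while for each third index $j \notin \{i_0, i_1\}$ the contribution to the edge change reduces to
\[
2\bigl[d_{\mcy}(e_{i_0 j}, f_{\pi_*(i_1)\pi_*(j)})^p - d_{\mcy}(e_{i_0 j}, y_0)^p - d_{\mcy}(y_0, f_{\pi_*(i_1)\pi_*(j)})^p\bigr] \leq 0,
\]
by the sub-additivity $d_{\mcy}(y_1, y_2)^p \leq d_{\mcy}(y_1, y_0)^p + d_{\mcy}(y_0, y_2)^p$, which follows from the triangle inequality for $p=1$ and from the max-domination hypothesis on $d_{\mcy}$ for $p>1$. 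Iterating reduces to $A_2 = \emptyset$.

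With $A_1 = [m]$ in force, I define $\pi \in S_n$ by $\pi|_{[m]} = \pi_*|_{[m]}$ and any bijection $\pi|_{[n]\setminus[m]} \colon [n] \setminus [m] \to [n] \setminus \pi_*([m])$. A short calculation, using that $\pi$ induces a bijection $[n]^2 \setminus [m]^2 \to [n]^2 \setminus \pi_*([m])^2$, shows that the edge sum in the original-problem cost equals exactly the nontrivial edge sum in the extended-problem cost; call this common value $D_{\mathrm{ext}}$. This then yields
\begin{align*}
  n\, d_{\mathbb{G},R_2}(([m],v,e),([n],w,f))^p &\leq (n-m)\,C_2^p + {\textstyle\sum_{i\in[m]}} d_{\mcx}(v_i, w_{\pi_*(i)})^p + \tfrac{1}{2(n-1)} D_{\mathrm{ext}}, \\
  l\, d_{\mathbb{G},R_2}(([l],v,e),([l],w,f))^p &= (l-m)\,C_2^p - (n-m)\,C_{\mcy}^p + {\textstyle\sum_{i\in[m]}} d_{\mcx}(v_i, w_{\pi_*(i)})^p + \tfrac{1}{2(l-1)} D_{\mathrm{ext}}.
\end{align*}

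It remains to show that the first line minus $(n/l)$ times the second is at most $(n-m)\, C_{\mcy}^p$. Collecting terms, this amounts to
\[
\Bigl(1-\tfrac{n}{l}\Bigr){\textstyle\sum_{i\in[m]}} d_{\mcx}(v_i, w_{\pi_*(i)})^p + \tfrac{(l-n)(l+n-1)}{2(n-1)l(l-1)}\, D_{\mathrm{ext}} \leq \tfrac{l-n}{l}\bigl[m\, C_2^p + (n-m)\, C_{\mcy}^p\bigr],
\]
both sides of which vanish when $l=n$; so I may assume $l > n$, divide through by $(l-n)/l$, and use $\sum d_{\mcx}(v_i,w_{\pi_*(i)})^p \leq m C_{\mcx}^p \leq m(C_2^p - C_{\mcy}^p)$ to absorb the $m\, C_2^p$ term on the right. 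What remains is $\frac{l+n-1}{2(n-1)(l-1)} D_{\mathrm{ext}} \leq n\, C_{\mcy}^p$, which will follow from the crude bound $D_{\mathrm{ext}} \leq n(n-1)\, C_{\mcy}^p$ (obtained by counting the $m(m-1) + (n-m)(n+m-1) = n(n-1)$ off-diagonal relevant pairs, each of size at most $C_{\mcy}^p$) combined with the elementary inequality $\frac{l+n-1}{2(l-1)} \leq 1$, valid precisely when $l \geq n+1$. The main obstacle lies in exactly this final book-keeping: the normalising factors $\frac{1}{2(l-1)} < \frac{1}{2(n-1)}$ move in an unfavourable direction, and compensation is provided only by the stringent penalty hypothesis $C_2^p \geq C_{\mcx}^p + C_{\mcy}^p$ together with the off-diagonal pair count above.
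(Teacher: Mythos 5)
Your proof is correct, and its crux coincides with the paper's: both arguments hinge on an exchange step showing that an optimal permutation for the size-$l$ problem may be assumed to map the ``real'' block into $[n]$, and in both cases the cost of that exchange is controlled by the sub-additivity $d_{\mcy}(y_1,y_2)^p \leq d_{\mcy}(y_1,y_0)^p + d_{\mcy}(y_0,y_2)^p$ (triangle inequality for $p=1$, the max-domination hypothesis for $p>1$) together with the penalty condition $C_2^p \geq C_{\mcx}^p + C_{\mcy}^p$. The surrounding book-keeping is organised differently: the paper argues forward from the left-hand side, rewriting it via the fill-up identity at size $n$ and then pushing each inner edge sum from the $\frac{1}{n-1}$ to the $\frac{1}{l-1}$ normalisation row by row with the elementary implication ``$a \leq (n-1)C_{\mcy}^p$ implies $\frac{a}{n-1} \leq \frac{(l-n)C_{\mcy}^p + a}{l-1}$'', whereas you start from an optimal $\pi_*$ for the right-hand side, restrict it to a feasible candidate for the left, and close the gap with the single aggregate bound $D_{\mathrm{ext}} \leq n(n-1)C_{\mcy}^p$ and the inequality $\frac{l+n-1}{2(l-1)} \leq 1$ for $l \geq n+1$. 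The two normalisation arguments are equivalent in strength (both are just the diameter bound on $d_{\mcy}$ deployed at different granularity); your version is arguably more transparent about exactly where the slack comes from, at the cost of requiring the explicit identification of the common edge sum $D_{\mathrm{ext}}$, which you correctly justify via the bijection $[n]^2\setminus[m]^2 \to [n]^2\setminus\pi_*([m])^2$. The only loose end is the degenerate case $n=1$ (division by $n-1$), which is covered by the paper's convention $0/0=0$ since all edge sums then vanish.
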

\begin{proof}
Let $\overline{C}_{\mcy}^p := C_{\mcy}^p \frac{(n-m)}{n}$. By Equation~\eqref{eq: Equality filling up smaller to larger size with constant in OSPACE} and the fact that $a\leq nC$ implies $\frac{a}{n} \leq \frac{(l-n) C + a}{l}$, we obtain
        \begin{align*}
        &d_{\mathbb{G},R_2}(([m],v,e),([n],w,f))= \biggl(d_{\mathbb{G},R_2}(([n],v,e),([n],w,f))^p + \overline{C}^p_{\mcy} \biggr)^{1/p} \\
        &=\min\limits_{\pi \in S_{n}} \biggl[ \frac{1}{n} \sum_{i\in[n]} d_{\mcx}(v_i, w_{\pi(i)})^p 
        + \frac12\frac{1}{n} \sum_{i\in[n]} \frac{1}{n-1} \sum_{\substack{i'\in[n] \\ i \neq i'}} \underbrace{ d_{\mcy}(e_{ii'}, f_{\pi(i)\pi(i')})^p}_{\leq \, C_{\mcy}^p} + \overline{C}^p_{\mcy} \biggr]^{1/p}\\
        &\leq\min\limits_{\pi \in S_{n}} \biggl[ \frac{1}{n} \sum_{i\in[n]} \underbrace{d_{\mcx}(v_i, w_{\pi(i)})^p}_{\leq  \overline{C}^p_{\mcx}}
        + \frac12\frac{1}{n} \sum_{i\in[n]} \underbrace{\frac{1}{l-1} \biggl( \sum_{\substack{i'\in[n] \\ i \neq i'}} d_{\mcy}(e_{ii'}, f_{\pi(i)\pi(i')})^p + (l-n) C_{\mcy}^p \biggr)}_{\leq \, C_{\mcy}^p} + \overline{C}^p_{\mcy} \biggr]^{1/p}\\
        &\leq \min\limits_{\pi \in S_{n}} \biggl[ \frac{1}{l} \biggl(\sum_{i\in[n]} d_{\mcx}(v_i, w_{\pi(i)})^p + (l-n) \overline{C}^p_{\mcx} \biggr) &\\
        &\hspace*{15mm} + \frac12\frac{1}{l} \biggl( \sum_{i\in[n]} \frac{1}{l-1} \biggl( \sum_{\substack{i'\in[n] \\ i \neq i'}} d_{\mcy}(e_{ii'}, f_{\pi(i)\pi(i')})^p + (l-n)  C_{\mcy}^p \biggr) + (l-n) C_{\mcy}^p \biggr) + \overline{C}^p_{\mcy} \biggr]^{1/p} \\
        &\leq \min\limits_{\pi \in S_{n}} \biggl[ \frac{1}{l} \biggl(\sum_{i\in[n]} d_{\mcx}(v_i, w_{\pi(i)})^p + (l-n) C_2^p \biggr)  + \frac12 \frac{1}{l (l-1)}  \sum_{i\in[n]}   \sum_{\substack{i'\in[n] \\ i \neq i'}} d_{\mcy}(e_{ii'}, f_{\pi(i)\pi(i')})^p  + \overline{C}^p_{\mcy} \biggr]^{1/p} \\
        &\overset{(\dagger)}{=} \min\limits_{\pi \in S_{l}} \biggl[ \underbrace{ \frac{1}{l} \sum_{i\in[l]} d_{\mcx}(v_i, w_{\pi(i)})^p   + \frac12 \frac{1}{l(l-1)}\sum_{i\in[l]} \sum_{\substack{i'\in[l] \\ i \neq i'}}  d_{\mcy}(e_{ii'}, f_{\pi(i)\pi(i')})^p }_{=: \, F_{\pi}} + \overline{C}^p_{\mcy} \biggr]^{1/p}\\
        &=  \biggl(d_{\mathbb{G},R_2}(([l],v,e),([l],w,f))^p + C_{\mcy}^p \frac{(n-m)}{n}\biggr)^{1/p}.
\end{align*}
It remains to show Equation $(\dagger)$. We require that there is a minimizer $\pi_0 \in S_l$ of $F_{\pi}$ such that $\pi_0([n]) = [n]$. Once this is established, it follows
by $d_{\mcx}(v_i, w_{\pi_0(i)}) = C_2$ and $d_{\mcy}(e_{ii'}, f_{\pi_0(i)\pi_0(i')}) = 0$ for all $i, i' \in [l] \setminus [n]$ 
that the large bracket term in the last line is equal to the large bracket term in the second last line for any such $\pi_0$. Hence it is enough to minimize over $S_n$.

In order to show that there is a minimizer $\pi_0 \in S_l$ of $F_{\pi}$ which satisfies $\pi_0([n]) = [n]$, we start from any minimizer $\pi_* \in S_l$ of $F_{\pi}$. Let $A = \{i \in [n] \mvert \pi_*(i) \not\in [n] \}$ and $B = \{j \in [n] \mvert \exists i \in [l] \setminus [n] \text{ such that } \pi_*(i) = j \}$. Since $\pi_*$ is a permutation, $A$ and $B$ must have the same cardinality $r$, say. Define $\pi_0$ by swapping values of $i \in [n]$ for values from $B$ in an arbitrary way, i.e.\ enumerating $A = \{a_1, \ldots, a_r\}$, $B = \{b_1, \ldots, b_r\}$, $C = \pi_*^{-1}(B) = \{c_1, \ldots, c_r\}$, $D = \pi_*(A) = \{d_1, \ldots, d_r\}$ set
$$\pi_0(k) = \begin{cases}
  b_i &\text{if $k=a_i$ for some $i \in [r]$,} \\
  d_i &\text{if $k=c_i$ for some $i \in [r]$,} \\
  \pi_*(k) &\text{otherwise}.
\end{cases}
$$ 
Note that $d_{\mcx}(v_i, w_{\pi_0(i)}) = d_{\mcx}(v_i, w_{\pi_*(i)})$ for all $i\in[l]\setminus(A\cup C)$ and $d_{\mcy}(e_{ii'}, f_{\pi_0(i)\pi_0(i')}) = d_{\mcy}(e_{ii'}, f_{\pi_*(i)\pi_*(i')})$ for all $i,i'\in[l]\setminus(A\cup C)$ with $i\neq i'$. Due to the way we filled up the graphs we obtain $d_{\mcx}(v_i, w_{\pi_*(i)})^p = C_2^p$ and $d_{\mcx}(v_i, w_{\pi_0(i)})^p \leq C_{\mcx}^p$ for all $i\in A$, while $d_{\mcx}(v_i, w_{\pi_*(i)})^p =\overline{C}^p_{\mcx} = C_2^p-C_{\mcy}^p$ and $d_{\mcx}(v_i, w_{\pi_0(i)})^p = C_2^p$ for all $i\in C$. Further note that $e_{ij} = f_{ij} = y_0$ for all $n+1\leq i\vee j\leq l$. In particular we have $f_{\pi_0(i)\pi_0(j)}= e_{ij} = y_0$ for $\{i,j\}\cap C \neq \emptyset$ as well as $f_{\pi_*(i)\pi_*(j)} = y_0$ for $\{i,j\}\cap A \neq \emptyset$. This yields
\begin{align*}
  &F_{\pi_0} - F_{\pi_*}\\
  &= \frac{1}{l} \sum_{i \in A} \bigl[ \underbrace{d_{\mcx}(v_i, w_{\pi_0(i)})^p - d_{\mcx}(v_i, w_{\pi_*(i)})^p}_{\leq C_{\mcx}^p -C_2^p\leq \, -C_{\mcy}^p} \bigr] + \frac{1}{l} \sum_{i \in C} \bigl[ \underbrace{d_{\mcx}(v_i, w_{\pi_0(i)})^p - d_{\mcx}(v_i, w_{\pi_*(i)})^p}_{=\,C_2^p -\overline{C}^p_{\mcx} \, \leq \, C_{\mcy}^p} \bigr] \\
  &\hspace*{12mm} + \frac{1}{l(l-1)} \biggl( \sum_{i \in A} \sum_{\substack{i'\in[l] \\ i' \neq i}} \bigl[ \tfrac12 d_{\mcy}(e_{ii'}, f_{\pi_0(i)\pi_0(i')})^p - \tfrac12 d_{\mcy}(e_{ii'}, f_{\pi_*(i)\pi_*(i')})^p \bigr] \\[-1mm] 
  &\hspace*{30mm} + \sum_{i \in C} \sum_{\substack{i'\in[l] \\ i' \neq i}} \bigl[ \tfrac12 d_{\mcy}(e_{ii'}, f_{\pi_0(i)\pi_0(i')})^p - \tfrac12 d_{\mcy}(e_{ii'}, f_{\pi_*(i)\pi_*(i')})^p \bigr]\\[-1mm] 
  &\hspace*{30mm} +  \sum_{i\in [l]\setminus (A\cup C)} \sum_{\substack{i'\in (A\cup C)}} \bigl[ \tfrac12 d_{\mcy}(e_{ii'}, f_{\pi_0(i)\pi_0(i')})^p - \tfrac12 d_{\mcy}(e_{ii'}, f_{\pi_*(i)\pi_*(i')})^p \bigr]\biggr) \\
  &\leq \frac{1}{l(l-1)} \biggl( 2\cdot \sum_{i \in A} \sum_{\substack{i'\in[l] \\ i' \neq i}} \bigl[ \tfrac12 d_{\mcy}(e_{ii'}, f_{\pi_0(i)\pi_0(i')})^p - \tfrac12 d_{\mcy}(e_{ii'}, f_{\pi_*(i)\pi_*(i')})^p \bigr] \\[-1mm] 
  &\hspace*{8mm} + 2\cdot\sum_{i \in C} \sum_{\substack{i'\in[l] \\ i' \neq i}} \bigl[ \tfrac12 d_{\mcy}(e_{ii'}, f_{\pi_0(i)\pi_0(i')})^p - \tfrac12 d_{\mcy}(e_{ii'}, f_{\pi_*(i)\pi_*(i')})^p \bigr]\\[-1mm] 
  &\hspace*{8mm} -   \sum_{\substack{i,i'\in (A\cup C)\\ i' \neq i}} \bigl[ \tfrac12 d_{\mcy}(e_{ii'}, f_{\pi_0(i)\pi_0(i')})^p - \tfrac12 d_{\mcy}(e_{ii'}, f_{\pi_*(i)\pi_*(i')})^p \bigr]\biggr) \\
  &=  \frac{1}{l(l-1)} \biggl( \sum_{i \in A} \sum_{\substack{i'\in[n] \\ i' \neq i}} d_{\mcy}(e_{ii'}, f_{\pi_0(i)\pi_0(i')})^p + \sum_{i \in A} \sum_{i'\in[l]\setminus[n]} \underbrace{d_{\mcy}(e_{ii'}, f_{\pi_0(i)\pi_0(i')})^p}_{= \, 0} \\[-1mm] 
  &\hspace*{8mm} - \sum_{i \in A} \sum_{\substack{i'\in[n] \\ i' \neq i}} d_{\mcy}(e_{ii'}, f_{\pi_*(i)\pi_*(i')})^p - \sum_{i \in A} \sum_{i'\in[l]\setminus[n]}\underbrace{ d_{\mcy}(e_{ii'}, f_{\pi_*(i)\pi_*(i')})^p}_{= 0} \\[-1mm] 
  &\hspace*{8mm} + \sum_{i \in C} \sum_{i'\in[n]} \underbrace{d_{\mcy}(e_{ii'}, f_{\pi_0(i)\pi_0(i')})^p}_{= \, 0} + \sum_{i \in C} \sum_{\substack{i'\in[l]\setminus[n] \\ i' \neq i}} \underbrace{d_{\mcy}(e_{ii'}, f_{\pi_0(i)\pi_0(i')})^p}_{= \, 0} \\[-1mm] 
  &\hspace*{8mm} - \sum_{i \in C} \sum_{i'\in A} \underbrace{d_{\mcy}(e_{ii'}, f_{\pi_*(i)\pi_*(i')})^p}_{=0}- \sum_{i \in C} \sum_{\substack{i'\in[n] \\ i'\notin A}} d_{\mcy}(e_{ii'}, f_{\pi_*(i)\pi_*(i')})^p\\
  &\hspace*{8mm}- \sum_{i \in C} \sum_{\substack{i'\in C \\ i' \neq i}} d_{\mcy}(e_{ii'}, f_{\pi_*(i)\pi_*(i')})^p- \sum_{i \in C} \sum_{\substack{i'\in[l]\setminus[n]\\ i'\notin C}} \underbrace{d_{\mcy}(e_{ii'}, f_{\pi_*(i)\pi_*(i')})^p}_{= 0}\\
  &\hspace*{8mm}- \biggl[\sum_{\substack{i,i'\in A \\ i\neq i'}}\frac12 d_{\mcy}(e_{ii'}, f_{\pi_0(i)\pi_0(i')})^p - \sum_{\substack{i,i'\in A \\ i\neq i'}}\frac12 d_{\mcy}(e_{ii'}, f_{\pi_*(i)\pi_*(i')})^p\\
  &\hspace*{8mm} + \sum_{\substack{i,i'\in C \\ i\neq i'}}\frac12 \underbrace{d_{\mcy}(e_{ii'}, f_{\pi_0(i)\pi_0(i')})^p}_{=0} - \sum_{\substack{i,i'\in C \\ i\neq i'}}\frac12 d_{\mcy}(e_{ii'}, f_{\pi_*(i)\pi_*(i')})^p \\
  &\hspace*{8mm}+ 2\cdot \sum_{i\in A} \sum_{i'\in C}\frac12 \underbrace{d_{\mcy}(e_{ii'}, f_{\pi_0(i)\pi_0(i')})^p}_{=0} -  2\cdot \sum_{i\in A} \sum_{i'\in C}\frac12\underbrace{ d_{\mcy}(e_{ii'}, f_{\pi_*(i)\pi_*(i')})^p}_{=0} \biggr] \biggr)\\
  &\leq  \frac{1}{l(l-1)} \biggl( \sum_{\substack{i,i'\in A \\ i' \neq i}} d_{\mcy}(e_{ii'}, f_{\pi_0(i)\pi_0(i')})^p + \sum_{i \in A} \sum_{\substack{i'\in[n] \\ i' \notin A}} d_{\mcy}(e_{ii'}, f_{\pi_0(i)\pi_0(i')})^p \\
  &\hspace*{8mm}- \sum_{\substack{i,i'\in A \\ i' \neq i}} d_{\mcy}(e_{ii'}, f_{\pi_*(i)\pi_*(i')})^p- \sum_{i \in A} \sum_{\substack{i'\in[n] \\ i' \notin A}} d_{\mcy}(e_{ii'}, f_{\pi_*(i)\pi_*(i')})^p\\
  &\hspace*{8mm}-  \sum_{\substack{i,i'\in C \\ i\neq i'}}d_{\mcy}(e_{ii'}, f_{\pi_*(i)\pi_*(i')})^p - \sum_{i \in C} \sum_{\substack{i'\in[n]\\ i'\notin A}} d_{\mcy}(e_{ii'}, f_{\pi_*(i)\pi_*(i')})^p\\
  &\hspace*{8mm}- \biggl[\sum_{\substack{i,i'\in A \\ i\neq i'}}\frac12 d_{\mcy}(e_{ii'}, f_{\pi_0(i)\pi_0(i')})^p - \sum_{\substack{i,i'\in A \\ i\neq i'}}\frac12 d_{\mcy}(e_{ii'}, f_{\pi_*(i)\pi_*(i')})^p - \sum_{\substack{i,i'\in C \\ i\neq i'}}\frac12 d_{\mcy}(e_{ii'}, f_{\pi_*(i)\pi_*(i')})^p \biggr]\biggr)\\
%
  &=\frac{1}{l(l-1)} \biggl( \sum_{\substack{i,i'\in A \\ i' \neq i}} \frac12d_{\mcy}(e_{ii'}, f_{\pi_0(i)\pi_0(i')})^p + \sum_{i \in A} \sum_{\substack{i'\in[n] \\ i' \notin A}} d_{\mcy}(e_{ii'}, f_{\pi_0(i)\pi_0(i')})^p \\
  &\hspace*{8mm}- \sum_{\substack{i,i'\in A \\ i' \neq i}} \frac12d_{\mcy}(e_{ii'}, y_0)^p- \sum_{i \in A} \sum_{\substack{i'\in[n] \\ i' \notin A}} d_{\mcy}(e_{ii'}, y_0)^p\\
  &\hspace*{8mm}-  \sum_{\substack{i,i'\in C \\ i\neq i'}}\frac12d_{\mcy}(y_0, f_{\pi_*(i)\pi_*(i')})^p - \sum_{i \in C} \sum_{\substack{i'\in[n]\\ i'\notin A}} d_{\mcy}(y_0, f_{\pi_*(i)\pi_*(i')})^p \biggr)\\
  &= \frac{1}{l(l-1)} \biggl(\frac12\biggl[ \sum_{\substack{i,i'\in A \\ i' \neq i}} d_{\mcy}(e_{ii'}, f_{\pi_0(i)\pi_0(i')})^p- \sum_{\substack{i,i'\in A \\ i' \neq i}} d_{\mcy}(e_{ii'}, y_0)^p- \sum_{\substack{i,i'\in C \\ i\neq i'}}d_{\mcy}(y_0, f_{\pi_*(i)\pi_*(i')})^p\biggr]\\
  &\hspace*{8mm}+ \sum_{i \in A} \sum_{\substack{i'\in[n] \\ i' \notin A}} d_{\mcy}(e_{ii'}, f_{\pi_0(i)\pi_0(i')})^p - \sum_{i \in A} \sum_{\substack{i'\in[n] \\ i' \notin A}} d_{\mcy}(e_{ii'}, y_0)^p - \sum_{i \in C} \sum_{\substack{i'\in[n]\\ i'\notin A}} d_{\mcy}(y_0, f_{\pi_*(i)\pi_*(i')})^p \biggr)\\
  &= \frac{1}{l(l-1)} \biggl( \frac12\biggl[\sum_{\substack{i,i'\in A \\ i' \neq i}} d_{\mcy}(e_{ii'}, f_{\pi_0(i)\pi_0(i')})^p- \sum_{\substack{i,i'\in A \\ i' \neq i}} d_{\mcy}(e_{ii'}, y_0)^p- \sum_{\substack{i,i'\in A \\ i\neq i'}}d_{\mcy}(y_0, f_{\pi_0(i)\pi_0(i')})^p\biggr]\\
  &\hspace*{8mm}+ \sum_{i \in A} \sum_{\substack{i'\in[n] \\ i' \notin A}} d_{\mcy}(e_{ii'}, f_{\pi_0(i)\pi_0(i')})^p - \sum_{i \in A} \sum_{\substack{i'\in[n] \\ i' \notin A}} d_{\mcy}(e_{ii'}, y_0)^p - \sum_{i \in A} \sum_{\substack{i'\in[n]\\ i'\notin A}} d_{\mcy}(y_0, f_{\pi_0(i)\pi_0(i')})^p \biggr)\\
  &\leq 0.
\end{align*}
The last equality follows because by construction of $\pi_0$ we have $\{\pi_*(i)\,\vert\,i\in C\} = \{\pi_0(i)\,\vert\,i\in A\}$, which implies that summing $d_{\mcy}(y_0, f_{\pi_*(i)\pi_*(i')})^p$ over $i,i'\in C$ is the same as summing $d_{\mcy}(y_0, f_{\pi_0(i)\pi_0(i')})^p$ over $i,i'\in A$. As further $\pi_0\vert_{[n]\setminus A} = \pi_*\vert_{[n]\setminus A} $ this also yields that summing $d_{\mcy}(y_0, f_{\pi_*(i)\pi_*(i')})^p$ over $i\in C$ and $i'\in[n]\setminus A$ is the same as summing $d_{\mcy}(y_0, f_{\pi_0(i)\pi_0(i')})^p$ over $i\in A$ and $i' \in[n]\setminus A$.
The last inequality holds by the triangle inequality for $d_{\mcy}$ in the case $p=1$ and by the assumption that $d_{\mcy}(y_1,y_2) \leq \max\{d_{\mcy}(y_1,y_0),d_{\mcy}(y_0,y_2)\}$ for all $y_1,y_2\in\mcy$ in the case $p>1$.
\end{proof}

\subsection{Proofs left out in Chapter~\ref{ssec: convergence considerations}}

\begin{proof}[Proof of Theorem~\ref{thm: Metrization of weak convergence}]
We start by showing that convergence w.r.t. to one of the $d_{\G}$ metrics is equivalent to weak convergence in $\mfn'_{\mathbb{G}}$. For this, we build on the ideas in the proof of \textcite[Proposition 4.2]{xia2005}, which deals with sequences of individual counting measures.

Recall that ${([m_n],v_n,e_n), ([m],v,e) \in \mathbb{G}}$ are only fixed up to permutations of the underlying index set. To lighten the notation we may assume w.l.o.g.\ that for graphs of the same size $m$ the optimal permutation in the definition of $d_{\mathbb{G}}(([m],v_n,e_n),([m],v,e))$ is always the identity. As we will see below, both weak convergence and convergence w.r.t.\ $d_{\G}$ directly imply the existence of an $N\in\N$ such that $m_n = m$ and hence we may and do assume the identity is the optimal solution for all $n\geq N$. We transfer these enumerations to the graph measures $(\xi_n,\sigma_n) = (\sum_{i} \delta_{{v'_i}^n}, \sum_{j < j'} \delta_{{e'_{jj'}\hspace*{-7pt}}^n})$ and $(\xi,\sigma) = (\sum_{i} \delta_{{v'_i}}, \sum_{j < j'} \delta_{{e'_{jj'}}})$ for $n\geq N$. 

We note in the case where $d_{\G}$ is the GTT metric $d_{\G,A}$ that the graphs ${([m],v_n,e_n), ([m],v,e)}$ are filled up to the size of $2m$ with auxiliary elements according to Proposition~\ref{prop: GTT as optimal matching (filled to n+m)}. Similarly, we fill up the corresponding measures $(\xi_n,\sigma_n), (\xi,\sigma)$ using the bijection to the graph objects. For simplicity, we continue to use $m$ to denote the total number of vertices. Clearly, this does not change the equivalence of the convergences. 

Assume first that $d_{\mathbb{G}}(([m_n],v_n,e_n),([m],v,e))\to 0$. As $C>0$, there exists an $N\in \N$ such that $m_n = m$ and hence $ \abs{\xi_n} = \abs{\xi}$ for all $n\geq N$. 

Since $\mfn(\mcx') \times \mfn(\mcy')$ and $\mfn(\mcx' \times \mcy')$ are homeomorphic, we have for $(\xi_n,\sigma_n),(\xi, \sigma)\in \mfn'_\mathbb{G}$ that $(\xi_n,\sigma_n) \to (\xi, \sigma)$ weakly in $\mfn'_\mathbb{G}$ iff for every bounded continuous function $f \colon \mcx'\times\mcy' \to \R$ we have $\int f \, d(\xi_n \otimes \sigma_n) \to \int f \, d(\xi \otimes \sigma)$.
Now consider a bounded continuous function $f$ on $\mcx'\times\mcy'$. Then for $n\geq N$
\begin{align}
    &\bigg\vert \int_{\mcx'\times\mcy'} f(x',y')  \; \xi_n \otimes \sigma_n (\diff(x',y')) - \int_{\mcx'\times\mcy'} f(x',y') \; \xi\otimes\sigma (\diff(x',y'))\bigg\vert \notag\\[1mm]
    &= \bigg\vert \sum_{i=1}^m \sum_{\substack{j,j'=1 \\ j<j'}}^m f({v'_i}^n,{e'_{jj'}\hspace*{-7pt}}^n) - \sum_{i=1}^m \sum_{\substack{j,j'=1 \\ j<j'}}^m f(v'_i,e'_{jj'})\bigg\vert \notag\\
    &\leq  \sum_{i=1}^m \sum_{\substack{j,j'=1 \\  j<j'}}^m \big\vert f({v'_i}^n,{e'_{jj'}\hspace*{-7pt}}^n) - f(v'_i,e'_{jj'})\big\vert \notag\\
    &\leq m^3 \sup_{\substack{(v',\tilde{v}') \, \in \, \mathcal{S}^2 (\mcx'), \, (e',\tilde{e}') \, \in \, \mathcal{S}^2 (\mcy') \\
    d_{\mcx}(v,\tilde{v}) \, \leq \, C^{1}_m\, d_{\mathbb{G}}(([m_n],v_n,e_n),([m],v,e)) \\ d_{\mcy}(e,\tilde{e}) \, \leq \, C^{2}_m\, d_{\mathbb{G}}(([m_n],v_n,e_n),([m],v,e))}} \abs{f(v',e')-f(\tilde{v}',\tilde{e}')}, \label{eq:convergence_sup} 
\end{align}
where $v, \tilde{v}\in\mcx$ and $e,\tilde{e}\in\mcy$ denote the attribute components of $v', \tilde{v}'\in\mcx'$ and $e',\tilde{e}'\in\mcy'$, respectively, and $C^{1}_m = C^{2}_m = 1$ if $d_{\G} = d_{\G,A}$ is the GTT metric and $C^{1}_m = m^{1/p}$, ${C^{2}_m = (m \, (m-1))^{1/p}}$ if $d_{\G} = d_{\G,R_i}$, $i=1,2$, is one of the GOSPA metrics with order~$p$. Moreover, the sets $\mathcal{S}^2(\mcx')$ and $\mathcal{S}^2(\mcy')$ are defined as $\mathcal{S}^2(\mathcal{Z}') := \{(z',\tilde{z}')\,\mvert\,z',\tilde{z}'\in \mathcal{Z}'\text{ and } d_{\mathcal{Z}'}(z',\tilde{z}') = d_{\mathcal{Z}}(z,\tilde{z}) \}$ for $\mathcal{Z}= \mcx,\mcy$. Equivalently, the sets $\mathcal{S}^2(\mcx')$ and $\mathcal{S}^2(\mcy')$ contain pairs of vertex elements~$(v',\tilde{v}')$ and pairs of edge elements~$(e',\tilde{e}')$, respectively, that vary only in their attribute components, i.e.\ they have the same values in the extra labeling components (if they have any). 
In order to obtain the crucial two inequalities for $d_{\mcx}(v,\tilde{v})$ and $d_{\mcy}(e,\tilde{e})$ in the supremum, we have used the fact that the identity permutation gives the optimal pairing of vertex and edge indices (due to the initial choice of their enumerations). 

By Tychonoffs Theorem the space $\mcx'\times\mcy'$ is compact, which implies that $f$ is uniformly continuous. Thus the supremum in~\eqref{eq:convergence_sup} converges to zero, as  $d_{\mathbb{G}}(([m_n],v_n,e_n),([m],v,e))\to 0$ by assumption. Since $f$ was an arbitrary bounded continuous function on $\mcx'\times\mcy'$, we have shown weak convergence of $(\xi_n,\sigma_n)$ to $(\xi,\sigma)$. 

For the converse direction, assume that $(\xi_n,\sigma_n)$ converges weakly to $(\xi,\sigma)$. Hence $\xi_n \to \xi$ weakly and $\sigma_n \to \sigma$ weakly. By the former, choosing $f\equiv 1$, we obtain ${m_n = \xi_n(\mcx') \to \xi(\mcx') = m}$. Thus there exists an $N \in \N$ such that $m_n = m$ for all $n\geq N$. 

We derive the convergence of $([m_n],v_n,e_n)$ to $([m],v,e)$ in $d_{\G}$ by showing that every subsequence $(n_k)_k$ of $\{N,N+1,\ldots\}$ has a subsubsequence $(n_{k_j})_j$ such that $([m_{n_{k_j}}],v_{n_{k_j}},e_{n_{k_j}}) \to ([m],v,e)$.

Write $v_i^n = v_n(i)$ and $e_{ii'}^n = e_n(i,i')$ for the vertex and edge attributes. 
Let $(n_k)_k$ be a subsequence of $\{N,N+1,\ldots\}$ and consider the corresponding sequences of vertex and edge attributes. Recall that their indices are aligned with the indices of the vertex and edge attributes of $([m],v,e)$ so that they realize the $d_{\G}$-optimal matching.
As $\mcx$ is compact, we may pick a convergent subsubsequence for every subsequence of $(v_i^{n_k})_{k}$ and this for any $i \in [m]$. Picking subsubsequences one after the other for $i=1, \ldots, m$, we arrive at a preliminary subsubsequence $(n_{k_j})_j$ of $(n_k)_k$ and $\tilde{v}_i \in \mcx$, $i \in [m]$, such that $d_{\mcx}(v_i^{n_{k_j}},\tilde{v}_i)\to 0$ for every $i \in [m]$. As $\mcy$ is also compact, we may likewise pick a convergent subsubsequence for every subsequence of $(e_{ii'}^{n_{k_j}})_j$ and this for any $i,i' \in [m]$ with $i<i'$. Repeated picking of subsubsequences yields a final subsubsequence of $(n_k)_k$, which for convenience we refer to as $(n_{k_j})_j$ again, and $\tilde{e}_{ii'} \in \mcx$, $i,i' \in [m]$ with $i<i'$, such that both $d_{\mcx}(v_i^{n_{k_j}},\tilde{v}_i)\to 0$ and $d_{\mcx}(e_{ii'}^{n_{k_j}},\tilde{e}_{ii'})\to 0$ for all ${i,i'\in[m]}$ (setting $\tilde{e}_{ii}=0$ and $\tilde{e}_{i'i}=\tilde{e}_{ii'}$ for $i<i'$). Denoting by $([m],\tilde{v},\tilde{e})$ the graph constructed from the limit attributes, i.e.\ $\tilde{v}:[m] \to\mcx, \tilde{v}(i) = \tilde{v}_i$ and $\tilde{e}:[m]^2 \to\mcy, \tilde{e}(i,i') = \tilde{e}_{ii'}$, we obtain $d_{\mathbb{G}}(([m_{n_{k_j}}],v_{n_{k_j}},e_{n_{k_j}}),([m],\tilde{v},\tilde{e}))\to 0$ by the definition of~$d_{\mathbb{G}}$.

The forward direction then implies the convergence of the corresponding graph measures, i.e.\  $(\xi_{n_{k_j}},\sigma_{n_{k_j}})_j$ converges weakly to $(\tilde{\xi},\tilde{\sigma})$. By the uniqueness of the weak limit and the fact that $(\xi_{n},\sigma_{n})$ converges weakly to $(\xi,\sigma)$, we obtain $(\tilde{\xi},\tilde{\sigma}) = (\xi,\sigma)$. By the bijection to the graph space, this implies $([m],\tilde{v},\tilde{e}) = ([m],v,e)$. Since metric limits are unique, we have thus shown $([m_{n_{k_j}}],v_{n_{k_j}},e_{n_{k_j}}) \to ([m],v,e)$ in $d_{\G}$.

A crucial ingredient of this proof was that the graph measures ``see'' the assignment of their edge attributes to the (uniquely identifiable) vertex attributes, a fact which is implicitly used every time we use the bijection to go from $\mfn'_{\G}$ to $\G$.

It remains to show that the spaces $(\mfn'_{\mathbb{G}},d_{\mathbb{G}})$ are c.s.m.s.

We start with the separability property and remark that this property depends only on the topology. The separability of the spaces $\mfn(\mcx')$ and $\mfn(\mcy')$ (endowed with the weak topology) follows by \textcite[Proposition~2.2]{SXia2008}. Hence, the space $\mfn(\mcx')\times \mfn(\mcx')$ is separable as it is the product space of two separable metric spaces. As subspaces of separable metric spaces are separable again, this yields the separability of $\mfn'_{\mathbb{G}}$.

Finally, we show the completeness of the spaces $(\mfn'_{\mathbb{G}},d_{\mathbb{G}})$ in a somewhat similar way as in \textcite[Proposition~2.2]{SXia2008}. Let $(\xi_n,\sigma_n)_n$ be a Cauchy sequence in $\mfn'_{\mathbb{G}}$. Analogous to the proof for metrizing weak convergence, one can show that this implies the existence of an $N\in\N$ and $k \in \N_0$ such that $\abs{\xi_n} = \abs{\xi_n'}=k$ and hence $\abs{\sigma_n} = \abs{\sigma_n'} = \binom{k}{2}$ for all ${n,n'\geq N}$. 
Note that $\mfn^k(\mcx') = \{\xi \in \mfn(\mcx') \mvert \abs{\xi}=k\}$ and $\mfn^k(\mcy') = \{\sigma \in \mfn(\mcy') \mvert \abs{\sigma} = \binom{k}{2} \}$ are compact by the compactness of $\mcx'$ and $\mcy'$, hence so is their Cartesian product. Using that edge attributes can be continuously mapped to their unique vertex set, it can be shown that ${{\mfn'}^{k}_{\mathbb{G}} = \{(\xi,\sigma)\in\mfn'_{\mathbb{G}}\,\vert\, \abs{\xi} = k\}}$ is a closed subset of $\mfn^k(\mcx') \times \mfn^k(\mcy')$ and hence compact as well. Since the tail of $(\xi_n,\sigma_n)_n$ lies in this space, we have a Cauchy sequence with a convergent subsequence, hence $(\xi_n,\sigma_n)_n$ converges in ${\mfn'}_{\mathbb{G}}^k \subset {\mfn'}_{\mathbb{G}}$.
\end{proof}

\begin{proof}[Proof of Theorem~\ref{thm: Convergence Example}]
We construct a coupling of the graph measure pair such that the expected $p$-th power of the $d_{\G,R_k}$-distance is small. For this let $(\tXi_n,\tXi)$ be an optimal coupling for the $W_{\varrho,C_k,p}$-metric, i.e.\ we have $\tXi_n \eqinlaw \Xi_n$, $\tXi \eqinlaw \Xi$ and
$$
  \E[\varrho(\tXi_n,\tXi)^p] = W_{\varrho,C_k,p}(\mcl(\Xi_n),\mcl(\Xi)). 
$$
That such a minimizing coupling exists, follows from \textcite[Theorem~4.1]{villani2009} and the fact that $(\mfn(\mcx), \varrho)$ is a c.s.m.s. Let $\Pi=\Pi_n$ be the random permutation that attains the minimum in the definition of $\varrho(\tXi_n,\tXi)$ (if there are several, a systematic one is picked according to a fixed rule) 
In order to have a fixed domain, we interpret $\Pi$ as random element of the (countable) set $S$ of permutations $\pi \colon \N \to \N$ that satisfy $\pi(i) = i$ for all but finitely many $i$. The measurability of $\Pi$ follows by the fact that $\{\Pi = \pi\}$ can be represented by a measurable function of $(\tXi_n,\tXi)$, since the objective function for $\varrho(\tXi_n,\tXi)$ is itself such a function.

To couple the edge measures, consider $(U_{ii'})_{i,i'\in \N, i<i'}$ i.i.d.\ uniformly distributed on~$[0,1]$. Set $\tE_{n,ii'} = \tE_{n,i'i} = \1_{\{U_{ii'} \leq q_n\}}$ and $\tE_{\Pi(i)\Pi(i')} = \tE_{\Pi(i')\Pi(i)} = \1_{\{U_{ii'} \leq q\}}$ for $i < i'$. Note that
\begin{equation}
  \E \bigl[ d_{\mcy}(E_{n,ii'}, E_{\Pi(i)\Pi(i')})^p \bigr] 
  = C_{\mcy}^p \, \Prob \bigl( \min\{q_n,q\} < U_{ii'} \leq \max\{q_n,q\} \bigr) = C_{\mcy}^p \, \abs{q_n-q}.
\end{equation}
Also set $\tE_{n,ii} = \tE_{ii} = 0$ for all $i \in \N$.

Writing $\tXi_n = \sum_{i=1}^{\tM_n} \delta_{\tX_{n,i}}$ and $\tXi = \sum_{i=1}^{\tM} \delta_{\tX_{i}}$, we set
$$
  \tSigma_n = \sum_{\substack{i,i'=1\\ i<i'}}^{\tM_n} \delta_{(\tE_{n,{ii'}}, \{\tX_{n,i}, \tX_{n,i'}\})}, \quad \tSigma = \sum_{\substack{i,i'=1\\ i<i'}}^{\tM} \delta_{(\tE_{ii'}, \{\tX_{i}, \tX_{i'}\})}.
$$
By the independence of the $\Xi$- and their respective $E$-processes as well as the exchangeability of the $E$-processes in their double indices, it is easily seen that $\mcl(\tSigma_n \mvert \tXi_n) = \mcl(\Sigma_n \mvert \Xi_n)$ and $\mcl(\tSigma \mvert \tXi) = \mcl(\Sigma \mvert \Xi)$ and therefore $\mcl(\tXi_n, \tSigma_n) = \mcl(\Xi_n, \Sigma_n)$ and $\mcl(\tXi, \tSigma) = \mcl(\Xi, \Sigma)$.

As a consequence, writing $\tL_{-} = \min\{\tM_n,\tM\}$, $\tL_{+} = \max\{\tM_n,\tM\}$ and recalling that we use the parameter $p$ for the order of both the $d_{\G,R_k}$ and the Wasserstein metric, we have
\begin{align}
    &W_{\mathbb{G},R_k,p}(\mcl(\Xi_n, \Sigma_n),\mcl(\Xi, \Sigma)) \notag\\[1mm]
    &\hspace*{2mm} \leq \E \bigl[ d_{\G,R_k}\bigl( (\tXi_n, \tSigma_n), (\tXi, \tSigma) \bigr)^p \bigr] \notag\\
    &\hspace*{2mm} = \E [\varrho(\tXi_n,\tXi)^p] + \frac12 \, \E \biggl[\frac{1}{\tL_{+}(\tL_{+}-1)}  \sum_{i,i'\in[\tL_{-}]^2} d_{\mcy}(\tE_{n,ii'},\tE_{n,\Pi(i)\Pi(i')})^p \biggr]
    + \frac12 \, \E \tR_k \notag\\
    &\hspace*{2mm} \leq W_{\varrho,C_k,p}(\mcl(\Xi_n),\mcl(\Xi))^p + C_{\mcy}^p \frac{\abs{q_n-q}}{2} + \frac12 \, \E \tR_k, 
    \label{eq:wassergospa}
\end{align}
where $\tR_k = \tR_k(\tM_n, \tM, (\tE_{n,ii'}), (\tE_{ii'}))$ stands for the remainder term in Definition~\ref{def:gttmetric} that depends on the GOSPA metric used. Recall that we use the convention $0/0=0$, which is relevant whenever $\tL_{+}$ takes a value $\in \{0,1\}$ above. 

Noting that $\frac{l_{-}(l_{+}-l_{-})}{l_{+} (l_{+}-1)} \leq \frac{l_{+}-l_{-}}{l_{+}}$ for $l_{-},l_{+} \in \N_0$ with $l_{-} \leq l_{+}$ (again using $0/0 = 0$), we obtain for GOSPA1
\begin{align}
    \frac12 \, \E \tR_1
    = \frac12 \E \biggl[\frac{1}{\tL_+(\tL_+-1)} \tL_- (\tL_+-\tL_-) C_{\mcy}^p \biggr]
    \leq \frac12 C_{\mcy}^p \, \E \biggl[\frac{\tL_+-\tL_-}{\tL_{+}} \biggr]
    \leq W_{\varrho,C_1,p}(\mcl(\Xi_n),\mcl(\Xi))^p,
    \label{eq:remainder_gospa1}
\end{align}
using $\frac12 C_{\mcy}^p \leq C_1^p$,
and for GOSPA2 by the independence of $\Xi$- and $E$-processes
\begin{align}
    \frac12 \, \E \tR_2
    &= \frac12 \, \E \biggl[\frac{\1 \{\tM_n > \tM\}}{\tM_n(\tM_n-1)} 2 \sum_{i=1}^{\tM} \sum_{i'=\tM + 1}^{\tM_n} C_{\mcy}^p \tE_{n,ii'}  +  \frac{\1 \{\tM_n \leq \tM\}}{\tM(\tM-1)} 2 \sum_{i=1}^{\tM_n} \sum_{i'=\tM_n + 1}^{\tM} C_{\mcy}^p \tE_{ii'} \biggr] \notag\\
    &= \max\{q_n,q\} \, C_{\mcy}^p \, \E \biggl[\frac{1}{\tL_+(\tL_+-1)} \tL_- (\tL_+-\tL_-) \biggr] \notag\\
    &= \max\{q_n,q\} \, C_{\mcy}^p \, \E \biggl[\frac{\tL_+-\tL_-}{\tL_{+}} \biggr] \notag\\[1mm]
    &\leq \max\{q_n,q\} \, W_{\varrho,C_2,p}(\mcl(\Xi_n),\mcl(\Xi))^p, \label{eq:remainder_gospa2}
\end{align}
using $C_{\mcy}^p \leq C_2^p$. Note that the last inequalities in \eqref{eq:remainder_gospa1} and \eqref{eq:remainder_gospa2} are rather coarse in general and something better may be possible for specific point process distributions.

Plugging \eqref{eq:remainder_gospa1} and \eqref{eq:remainder_gospa2} into \eqref{eq:wassergospa} yields the claimed upper bound. Its convergence to zero follows from the fact that $W_{\varrho, C_k, p}$ metrizes convergence in distribution of point processes; see \textcite[Proposition~2.3 and Remark~5.1]{SXia2008}.
\end{proof}

\section{Algorithms}\label{sec: algorith}

\IncMargin{0em}
\begin{algorithm}[!ht]
\footnotesize
  \caption{\small \FuncSty{AuctionExt}: find good permutation for GTT/GOSPA distances. $\FuncSty{bidfor}(i, j, \DataSty{bidder2obj})$ makes the changes in assignment \DataSty{bidder2obj} if $i$ bids for object~$j$. \FuncSty{which.max} and \FuncSty{which.max.2} return indices of a largest and second largest value in a vector, $\varepsilon > 0$ ensures that \DataSty{bidincrement} is always positive.}\label{algo:AuctionExt} 
\DontPrintSemicolon
\SetKwData{penalties}{penalties}\SetKwData{iter}{iter}\SetKwData{maxiter}{maxiter}\SetKwData{stopat}{stop\_at}\SetKwData{change}{change}\SetKwData{fullmatch}{fullmatch}\SetKwData{fullmatchnum}{fullmatchnum}\SetKwData{persvalue}{persvalue}\SetKwData{bidfor}{bidfor}\SetKwData{perstoobj}{bidder2obj}\SetKwData{perstoobjnew}{bidder2obj\_new}\SetKwData{prices}{prices}\SetKwData{persvalue}{persvalue}\SetKwData{persvalues}{persvalues}\SetKwData{persvalue}{persvalue}\SetKwData{bidincrement}{bidincrement}\SetKwData{cost}{cost}\SetKwData{bestcost}{bestcost}\SetKwData{bestmatch}{bestmatch}
\SetKwData{true}{TRUE}\SetKwData{false}{FALSE}\SetKwData{na}{NA}\SetKwData{inf}{INF}
\SetKwFunction{rep}{rep}\SetKwFunction{comppersvals}{compute\_persvals}\SetKwFunction{whichmax}{which.max}\SetKwFunction{whichmaxtwo}{which.max.2}\SetKwFunction{bidfor}{bidfor}
\SetKwInOut{input}{Input}\SetKwInOut{output}{Output}
\input{$n \times n$ matrix $d$ of vertex distances ($d[i,j] = d^{(V)}_{i,j}$);\\
       $n \times n$ matrices $e,f$ of edge attributes $e[i,i']$, $f[j,j']$ for the two graphs;\\
       \penalties for the metric ($C$ for GTT, $C_{\mcy}$ and $C_1$ for GOSPA1, $C_2$ for GOSPA2);\hspace*{-5mm}\\
       termination parameters $\maxiter$ and $\stopat$.}   
\BlankLine
$\change \leftarrow \true$; $\iter \leftarrow 1$; $\fullmatch \leftarrow \false$; $\fullmatchnum \leftarrow 0$\;
$\perstoobj \leftarrow \rep(\na, n)$; $\prices \leftarrow \rep(0,n)$\;
$\bestcost \leftarrow \inf$; $\bestmatch \leftarrow \rep(\na, n)$\;
\While{$(\change = \true \ \&\& \ \fullmatchnum < \stopat \ \&\& \ \iter \leq \maxiter)$}{
  $\change \leftarrow \false$\;
  \For(\tcp*[f]{$i$ is index of current bidder}){$i \leftarrow 1$ \KwTo $n$}{
    $\persvalues \leftarrow \comppersvals(i, d, e, f, \penalties)$ \tcp*[r]{vector of personal values}
    $j \leftarrow \whichmax(\persvalues)$ \tcp*[r]{index of the new object $i$ bids for}
    $\perstoobjnew \leftarrow \bidfor(i,j, \perstoobj)$\;
    \lIf{$\perstoobjnew \neq \perstoobj$}{$\change \leftarrow \true$}
    $\perstoobj \leftarrow \perstoobjnew$\;
        $\bidincrement \leftarrow \whichmax(\persvalues)-\whichmaxtwo(\persvalues) + \varepsilon$\;
        $\prices[j] \leftarrow \prices[j] + \bidincrement$\;
        \eIf{everything is assigned}{
            \If{$\fullmatch = \false$}{
              $\fullmatch \leftarrow \true$\;
              $\fullmatchnum \leftarrow \fullmatchnum+1$\;
            }
            $\cost \leftarrow$ compute cost of assignment using penalties\;
            \If{$\cost < \bestcost$}{
              $\bestcost \leftarrow \cost$\;
              $\bestmatch \leftarrow \perstoobj$\;
            }
        }{
          $\fullmatch \leftarrow \false$
        }
  }
  $\iter \leftarrow \iter + 1$\;
}
\KwRet{\bestcost, \bestmatch}\;
\end{algorithm}\DecMargin{0em}

\IncMargin{0em}
\begin{algorithm}[!ht]
\footnotesize
  \caption{\small \FuncSty{compute\_persvals}: compute the personal values of the objects for a given bidder, assuming she must pay for externalities. \FuncSty{sum\_edge\_dist} computes the sum of $d_{\mcy}$-distances between two vectors of edge attributes (assigned in the order provided), \FuncSty{best\_sum\_edge\_dist} performs an optimal assignment between two vectors of edge attributes before taking the sum. \DataSty{vfact} and \DataSty{efact} are the normalizing factors for the vertex and edge sum terms, respectively, in the used metric, i.e.\ $\DataSty{vfact} = \frac1n$ and $\DataSty{efact} = \frac{1}{n(n-1)}$ for GOSPA and $\DataSty{vfact} = \DataSty{efact} = 1$ for GTT.  \DataSty{maxcost} is an arbitrary constant that does not change the result.}\label{algo:compute_persvals} 
\DontPrintSemicolon
\SetKwData{penalties}{penalties}\SetKwData{edgedists}{edgedists}\SetKwData{externality}{externality}\SetKwData{perstoobj}{bidder2obj}\SetKwData{objtopers}{obj2bidder}\SetKwData{perstoobjhypo}{bidder2obj\_new}\SetKwData{objtopershypo}{obj2bidder\_new}\SetKwData{prices}{prices}\SetKwData{assigned}{assigned}\SetKwData{persvals}{persvals}\SetKwData{persvalue}{persvalue}\SetKwData{bidincrement}{bidincrement}\SetKwData{cost}{cost}\SetKwData{bestcost}{bestcost}\SetKwData{bestmatch}{bestmatch}\SetKwData{abidnew}{abidnew}\SetKwData{abid}{abid}\SetKwData{aobj}{aobj}\SetKwData{aobjnew}{aobjnew}\SetKwData{externalityone}{externality1}\SetKwData{externalitytwo}{externality2}\SetKwData{compensations}{compensations}\SetKwData{distbefore}{distbefore}\SetKwData{distafter}{distafter}\SetKwData{vfact}{vfact}\SetKwData{efact}{efact}\SetKwData{maxcost}{maxcost}
\SetKwData{true}{TRUE}\SetKwData{false}{FALSE}\SetKwData{na}{NA}\SetKwData{inf}{INF}
\SetKwFunction{rep}{rep}\SetKwFunction{compvalext}{compute\_persval\_with\_ext}\SetKwFunction{whichmax}{which.max}\SetKwFunction{whichmaxtwo}{which.max.2}\SetKwFunction{bidfor}{bidfor}\SetKwFunction{sumedgedist}{sum\_edge\_dist}\SetKwFunction{bsumedgedist}{best\_sum\_edge\_dist}\SetKwFunction{compemin}{comp\_min\_ecost}\SetKwFunction{codesum}{sum}\SetKwFunction{which}{which}
\SetKwInOut{input}{Input}\SetKwInOut{output}{Output}
\input{index $i$ of the current bidder;\\
       $n \times n$ matrix $d$ of vertex distances $\smash{d[i,j] = d^{(V)}_{i,j}}$;\\
       $n \times n$ matrices $e,f$ of edge attributes $e[i,i']$, $f[j,j']$ for the two graphs;\\
       \penalties for the metric ($C$ for GTT, $C_{\mcy}$ and $C_1$ for GOSPA1, $C_2$ for GOSPA2)\\
       \hspace*{2mm} used implicitly in the functions $\sumedgedist$ and $\bsumedgedist$ below.}
\BlankLine
$\edgedists \leftarrow \rep(0, n)$; $\compensations \leftarrow \rep(\na, n)$\;
\For(\tcp*[f]{investigate changes if $i$ bids for object $j$}){$j \leftarrow 1$ \KwTo $n$}{
    $\perstoobjhypo \leftarrow \bidfor(i,j, \perstoobj)$\;
     $\abidnew \leftarrow \which(\perstoobjhypo \neq \na)$ \tcp*[r]{indices of newly assigned bidders}
     $\aobjnew \leftarrow \perstoobjhypo[\abidnew]$          \tcp*[r]{indices of newly assigned objects} 
     \tcp*[l]{actual edgedists for assigned vertices:}
     $\edgedists[j] \leftarrow \sumedgedist(e[i,\abidnew], f[j, \aobjnew])$\;
     \tcp*[l]{minimal possible edgedist for unassigned vertices (${}^c$ is complement):\hspace*{-5mm}}
     $\edgedists[j] \leftarrow \edgedists[j] + \bsumedgedist(e[i,\abidnew^c], f[j, \aobjnew^c])$\;
     \tcp*[l]{from here on:\ compensations for externalities}
     $\compensations[j] \leftarrow 0$\;
     $\abid \leftarrow \abidnew \setminus \{i\}$ \tcp*[r]{indices of bidders that stay assigned to same objects}
     $\aobj \leftarrow \aobjnew \setminus \{j\}$ \tcp*[r]{$=\perstoobj[\abid] = \perstoobjhypo[\abid]$}
     $j_0 \leftarrow \perstoobj[i]$\;
     \If(\tcp*[f]{compensation for unassigning $j_0$}){$j_0 \neq \na$}{
        $\distbefore \leftarrow \sumedgedist(e[i, \abid], f[j_0, \aobj])$\;
        $\distafter \leftarrow \sumedgedist(e[i, \abid], f[j, \aobj])$\;
        $\compensations[j] \leftarrow \distafter - \distbefore$\;
     }
     $i_0 \leftarrow \which(\perstoobj = j)$\;
     \If(\tcp*[f]{compensation for unassigning $i_0$}){$i_0 \neq \na$}{
        $\distbefore \leftarrow \sumedgedist(e[i_0, \abid], f[j, \aobj])$\;
        $\distafter \leftarrow \sumedgedist(e[i, \abid], f[j, \aobj])$\;
        $\compensations[j] \leftarrow \compensations[j] + \distafter - \distbefore$\;
     }
}
%
  $\persvals \leftarrow \rep(\maxcost, n) - d[i,] * \vfact - (\edgedists+\compensations) * \efact / 2 - \prices$\;
\KwRet{\persvals}\;
\end{algorithm}\DecMargin{0em}

\setlength{\bibhang}{2em}
\setlength{\bibitemsep}{0.2\baselineskip}
\printbibliography

\end{document}